\documentclass[11pt,twoside]{article}
\usepackage{amssymb,amsmath,amsthm,amsfonts,mathrsfs,hyperref}
\usepackage{times}
\usepackage{enumerate}
\usepackage{cite,titletoc}
\usepackage{color}
\usepackage[toc,page,title,titletoc,header]{appendix}
\usepackage{multirow}
\usepackage{graphicx}

\allowdisplaybreaks

\def\cA{\mathcal A}
\def\cB{\mathcal B}

\def\cF{\mathcal F}

\def\cK{\mathcal K}

\def\cO{\mathcal O}

\def\cW{\mathcal W}

\def\N{\mathop{\mathbb N\kern 0pt}\nolimits}
\def\Z{\mathop{\mathbb Z\kern 0pt}\nolimits}
\def\Q{\mathop{\mathbb Q\kern 0pt}\nolimits}
\def\R{\mathop{\mathbb R\kern 0pt}\nolimits}
\def\T{\mathop{\mathbb T\kern 0pt}\nolimits}
\def\SS{\mathop{\mathbb S\kern 0pt}\nolimits}

\def\ds{\displaystyle}
\def\f{\frac}
\def\al{\alpha}
\def\supp{\mathop{\rm supp}\nolimits}

\def\p{\partial}

\def\ve{\varepsilon}

\def\dive{\operatorname{div}}

\def\supp{\operatorname{supp}}
\def\ls{\lesssim}

\def\bn{\mathbf{n}}

\newcommand{\w}[1]{\langle {#1} \rangle}
\def\sgn{\operatorname{sgn}}

\hypersetup{colorlinks=true,linkcolor=blue,citecolor=red,urlcolor=cyan}

\theoremstyle{plain}
\newtheorem{theorem}{Theorem}[section]
\newtheorem{proposition}[theorem]{Proposition}
\newtheorem{lemma}[theorem]{Lemma}
\newtheorem{corollary}[theorem]{Corollary}

\newtheorem{remark}{Remark}[section]

\theoremstyle{definition}

\numberwithin{equation}{section}

\title{The sharp lifespan of small data smooth solutions to 2-D quadratic quasilinear wave equations in exterior domains}

\author{Bingbing Ding$^{1}$ \quad Fei Hou$^{2}$
    \quad Huicheng Yin$^{1,}$\footnote{Fei Hou (\texttt{fhou$@$nju.edu.cn}) and Huicheng Yin (\texttt{huicheng$@$nju.edu.cn}, \texttt{05407$@$njnu.edu.cn}) are supported by the National key research and development program of China (No.2024YFA1013301).
    In addition, Bingbing Ding (\texttt{13851929236$@$163.com}, \texttt{bbding$@$njnu.edu.cn}), Fei Hou and Huicheng Yin are supported by the NSFC (No.~12331007, No.~12571237).}\\
    [12pt]{\small 1. School of Mathematical Sciences and Mathematical Institute,}\\
    {\small Nanjing Normal University, Nanjing, 210023, China}\\
    {\small 2. School of Mathematics, Nanjing University, Nanjing, 210093, China}}

\begin{document}

\date{}
\maketitle
\thispagestyle{empty}

\begin{abstract}
In the paper [M. Keel, H. Smith, C.D. Sogge, Almost global existence for quasilinear wave equations in three space dimensions.
J. Amer. Math. Soc. 17 (2004), no. 1, 109-153], the authors prove that for the 3-D quadratic quasilinear wave equation in exterior domains
with homogenous Dirichlet boundary value and small initial data of size $\varepsilon$,
the lifespan ${\bar T}_{\varepsilon}$ of the smooth solution fulfills ${\bar T}_{\varepsilon}\ge e^{C/\varepsilon}$.
However, for the corresponding 2-D quadratic quasilinear wave equation in exterior domains
with homogenous Dirichlet or Neumann boundary value, so far it is still open whether
the expected sharp lifespan $T_{\varepsilon}\ge\frac{C}{\varepsilon^2}$ holds or not. In this paper,
we will solve this open question. Our main ingredients include: introducing the suitable Friedlander radiation field for the 2-D linear wave
equation in exterior domains with homogenous Dirichlet or  Neumann boundary value, constructing the delicate approximate solution,
and establishing some crucial space-time decay estimates for the solutions of 2-D quasilinear wave equation in exterior domains.
On the other hand, for the radial symmetric solutions to a class of 2-D quadratic quasilinear wave equation in exterior domains, the upper bound of the lifespan $T_{\varepsilon}\le\frac{C}{\varepsilon^2}$ is derived and the sharp constant $C$ is also determined
explicitly.
\vskip 0.2 true cm

\noindent
\textbf{Keywords.} Lifespan, quadratic quasilinear wave equation, Friedlander radiation filed,

\qquad \quad $L^{\infty}-L^{\infty}$ estimate, approximate solution, weighted energy estimate

\vskip 0.2 true cm
\noindent
\textbf{2020 Mathematical Subject Classification.}  35L05, 35L20, 35L72
\end{abstract}

\vskip 0.2 true cm

\addtocontents{toc}{\protect\thispagestyle{empty}}
\tableofcontents

\section{Introduction}

\subsection{Main results}
In this paper, we focus on the following IBVP (initial boundary value problem) of the 2-D quadratic quasilinear wave
equation in exterior domains with homogeneous Dirichlet or Neumann boundary condition
\begin{equation}\label{QWE}
\left\{
\begin{aligned}
&\Box u+Q(\p u,\p^2u)=0,\hspace{2.8cm} (t,x)\in[0,+\infty)\times\cK,\\
&\cB u(t,x)=0,\hspace{4.3cm} (t,x)\in[0,+\infty)\times\p\cK,\\
&(u,\p_tu)(0,x)=(u_0,u_1)(\ve,x),\qquad\qquad  x\in\cK,
\end{aligned}
\right.
\end{equation}
where $\p=(\p_0,\p_1,\p_2)=(\p_t,\p_{x_1},\p_{x_2})$, $x=(x_1,x_2)$, $\ds\Box=\p_t^2-\Delta$ with $\Delta=\p_1^2+\p_2^2$,
$\cK=\R^2\setminus\cO$,
the obstacle $\cO\subset\R^2$ is compactly convex and contains the origin, the boundary $\p\cK=\p\cO$ is smooth,
$(u_0,u_1)(\ve,x)\in C^\infty([0,1)\times\cK)$, $u_k(0,x)=0$ $(k=0,1)$ and $\supp_x(u_0,u_1)\subset\{x\in\cK:|x|\le M_0\}$
with some fixed constant $M_0>1$.
In addition, $\cB={\rm Id}$ or $\cB=\frac{\p}{\p\bn}$ represents the Dirichlet or Neumann boundary condition, respectively,
where $\bn=(n_0,n_1,n_2)=(0, n_1(x),n_2(x))$ is the unit outer normal of $[0,+\infty)\times\p\cK$.
The nonlinearity $Q(\p u,\p^2u)$ is given by
\begin{equation}\label{nonlinear}
Q(\p u,\p^2u)=\sum_{\alpha,\beta,\mu=0}^2Q^{\alpha\beta\mu}\p^2_{\alpha\beta}u\p_{\mu}u,
\end{equation}
where $Q^{\alpha\beta\mu}$ are constants and $Q^{\alpha\beta\mu}=Q^{\beta\alpha\mu}$ hold.
It is assumed that the first null condition fails for problem \eqref{QWE}, namely,
\begin{equation}\label{1st:fail}
Q_{1st}(\tilde\omega)=\sum_{\alpha,\beta,\mu=0}^2Q^{\alpha\beta\mu}\omega_\alpha\omega_\beta\omega_\mu\not\equiv0,
\end{equation}
where $\tilde\omega=(\omega_0,\omega_1,\omega_2)=(-1,\cos\theta,\sin\theta)$, $\omega=(\omega_1, \omega_2)=(\cos\theta,\sin\theta)$
and $x=r\omega$ with $r=|x|=\sqrt{x_1^2+x_2^2}$ and $\theta\in [0, 2\pi]$.

The compatibility condition of $(u_0,u_1)$ is necessary for finding the smooth solutions of \eqref{QWE}.
Set $J_ku=\{\p_x^{\alpha}u:0\le|\alpha|\le k\}$ and $\p_t^ku(0,x)=F_k(J_ku_0,J_{k-1}u_1)$ ($0\le k\le 2N$),
where $F_k$ depends on $Q(\p u, \p^2u)$, $J_ku_0$ and $J_{k-1}u_1$.
We say that the compatibility condition for problem \eqref{QWE} up to $(2N+1)-$order holds if
\begin{equation}\label{compat:condition}
\text{$\cB F_k$ vanish on $[0,+\infty)\times\p\cK$ for $0\le k\le 2N$.}
\end{equation}
On the other hand, the following admissible condition of $Q(\p u,\p^2u)$ for Neumann boundary value problem \eqref{QWE}
should be imposed as in (1.8) of \cite{MSS}
\begin{equation}\label{admiss:condition}
\begin{split}
&\text{For any $C^1(\R^2)$ smooth functions $w$ and $\psi$ verifying $\frac{\p}{\p\bn}w=\frac{\p}{\p\bn}\psi=0$ on $[0,+\infty)\times\p\cK$,}\\
&\text{$\sum_{\alpha,\beta,\mu=0}^2Q^{\alpha\beta\mu}n_{\alpha}\p_{\beta}w\p_{\mu}\psi\equiv 0$ holds on $[0,+\infty)\times\p\cK$.}
\end{split}
\end{equation}
Without loss of generality, the initial data $(u_0(\ve,x),u_1(\ve,x))$ are written as
\begin{equation}\label{initial:data}
u_k(\ve,x)=\ve u_k^1(x)+\ve^2u_k^2(\ve,x),\quad k=0,1,
\end{equation}
where $u_k^1(x)\in C^\infty(\cK)$, $\supp_x u_k^1\subset\{x\in\cK:|x|\le M_0\}$ ($k=0,1$),
$(u_0^1(x),  u_1^1(x))\not\equiv 0$, $(u_0^2,u_1^2)(\ve,x)\in C^\infty([0,1)\times\cK)$,
and $\supp_x(u_0^2,u_1^2)\subset\{x\in\cK:|x|\le M_0\}$.

Let $w_{B}=w_{B}(t,x)$ be a solution of the following homogeneous linear IBVP
\begin{equation}\label{LWE}
\left\{
\begin{aligned}
&\Box w_{B}=0,\qquad   \qquad \qquad  \qquad (t,x)\in[0,+\infty)\times\cK,\\
&\cB w_{B}|_{\p\cK}=0, \qquad   \qquad \qquad\quad (t,x)\in[0,+\infty)\times\p\cK,\\
&(w_{B},\p_tw_{B})(0,x)=(u_0^1,u_1^1)(x), \qquad \quad x\in\cK.
\end{aligned}
\right.
\end{equation}
Set $\sigma=r-t$ and define the Friedlander radiation field for problem \eqref{LWE}
\begin{equation}\label{def:F0}
F_0(\sigma,\theta)=\lim_{r\rightarrow+\infty}\sqrt{r}w_{B}(r-\sigma,r\omega).
\end{equation}

\begin{theorem}\label{thm1}
Suppose that  $N\ge19$ and \eqref{compat:condition} hold. In addition, the admissible condition \eqref{admiss:condition}
is imposed  for the Neumann boundary condition. Then
the lifespan $T_\ve$ of the smooth solution $u\in C([0,T_{\ve}), H^{2N+1}(\cK))\cap C^1([0,T_{\ve}), H^{2N}(\cK))$
to problem \eqref{QWE} satisfies
\begin{equation}\label{thm1:lifespan}
\liminf_{\ve\rightarrow 0+}\ve\sqrt{T_\ve}\ge\tau_0=\big(\max\limits_{(\sigma,\theta)}Q_{1st}(\tilde\omega)\p_\sigma^2F_0(\sigma,\theta)\big)^{-1}>0.
\end{equation}

\end{theorem}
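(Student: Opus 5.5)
We follow Alinhac's strategy for the 2-D Cauchy problem, adapted to the exterior domain. Fix $\tau<\tau_0$ and write $T=\tau^2/\ve^2$. We build an approximate solution $u_a$ on $[0,T]\times\cK$ and then close a bootstrap for $v=u-u_a$. Together these show that $u$ exists on $[0,T]$ for $\ve$ small, which gives \eqref{thm1:lifespan}.

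\textbf{Step 1: the radiation field.} First we justify the Friedlander field \eqref{def:F0} for the exterior problem \eqref{LWE}. Convexity of $\cO$ gives local energy decay for \eqref{LWE}: $O(t^{-1})$ for Dirichlet, and $O(t^{-1}(\ln t)^{-2})$-type for Neumann. A cutoff argument writes $w_B=\chi w_B+$ a free wave with forcing $[\Box,\chi]w_B$, which is compactly supported in $x$ and decays in $t$. Duhamel's formula in $\R^2$ then gives
\[
\sqrt r\,w_B(r-\sigma,r\omega)=F_0(\sigma,\theta)+O(r^{-1/2}\w{\sigma}^{C}),
\]
with corresponding bounds for $Z^\al$-derivatives, and $|\p_\sigma^kF_0|\ls \w{\sigma}^{-1/2-k}$ for $\sigma\to-\infty$. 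The decay in $t$ of the boundary forcing is what ensures that $F_0$ does not grow as $\sigma\to-\infty$.

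\textbf{Step 2: the approximate solution.} Introduce the slow time $s=\ve\sqrt{t}$. For $s$ small, i.e. $t\le\ve^{-2+\delta}$-type, take $u_a=\ve w_B$ plus a second-order Duhamel correction $\ve^2 w_2$, where $w_2$ solves the linear IBVP with source $-Q(\p w_B,\p^2 w_B)$ and the same boundary condition. Near the light cone and for $t\ge\ve^{-\delta'}$, take $u_a=\ve r^{-1/2}G(\sigma,\theta,s)$, where $G$ solves Alinhac's inviscid Burgers-type equation
\[
\p_s\p_\sigma G=\tfrac12 Q_{1st}(\tilde\omega)\,\p_\sigma G\,\p_\sigma^2G,\qquad G|_{s=0}=F_0 .
\]
The solution $\p_\sigma G$ stays smooth for $s<\tau_0$, since the blowup time of this Burgers equation is exactly $(\max Q_{1st}\p_\sigma^2F_0)^{-1}$. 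On $s\le\tau<\tau_0$ one gets uniform bounds $|\p_\sigma^k\p_\theta^l\p_s^m G|\ls \w{\sigma}^{-1/2-k}$. The two pieces are glued with cutoffs in the overlap region, where both are equal to $\ve r^{-1/2}F_0$ to leading order.

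Since $\supp u_a$ near the cone lies far from $\cO$, the boundary condition $\cB u_a=0$ is inherited from $w_B,w_2$. The error $\mathcal R=\Box u_a+Q(\p u_a,\p^2u_a)$ should then satisfy
\[
\int_0^T\|Z^{\al}\mathcal R(t)\|_{L^2}\,dt\ls \ve^{3/2}\quad\text{or better},\qquad |\al|\le 2N .
\]

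\textbf{Step 3: bootstrap, the main obstacle.} The difference $v$ solves a quasilinear IBVP with small coefficients $\p u_a+\p v$, forcing $-\mathcal R$, and $\cB v=0$. We close energy estimates with the vector fields $\p,\Omega$ and, away from the obstacle, $S$ and $L$, using cutoffs. The admissible condition \eqref{admiss:condition} kills the boundary terms that appear in the Neumann energy identity. The time-integrated bound $\int_0^T\|\p u_a\|_{L^\infty}\,dt\ls C(\tau)$ is $O(1)$ rather than small, so we must use a Gronwall factor that stays bounded. Bounding the factor needs the good-derivative structure $|\bar\p u_a|\ls\ve\w{t}^{-3/2}$ together with a ghost-weight $e^{q(\sigma)}$ energy (Alinhac) to absorb the $\p_\sigma$ part.

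Pointwise decay of $v$ comes from $L^\infty$--$L^\infty$ estimates for the 2-D exterior linear wave equation. These are obtained by the same cutoff reduction to $\R^2$, plus local energy decay, plus boundary commutators. They lose only a logarithm, which fits under the $\ve^{3/2}$ margin. Controlling the boundary terms and the commutators with the non-tangential fields $S,L$ near $\p\cK$ (handled by elliptic regularity and local decay), uniformly up to $t\sim\ve^{-2}$, is the delicate part; we expect this step to require the high regularity $N\ge19$. The bootstrap then gives $\|Z^{\le N}v\|$ bounded up to $T$, and continuation yields $T_\ve\ge\tau^2/\ve^2$ for every $\tau<\tau_0$.
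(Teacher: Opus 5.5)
Your Steps 1--2 broadly match the paper: a cutoff plus Duhamel for $F_0$, and a Burgers profile glued to $\ve w_B$. Step 3, however, has a genuine gap, located exactly at what you call the delicate part.

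You propose energy estimates with $S$ and $L$ cut off away from $\p\cK$, with the commutator and boundary terms absorbed ``by elliptic regularity and local decay''. In 2-D this cannot work on the time scale $\ve^{-2}$. The commutator $[\Box,\chi]$ applied to $Sv$ or $L_iv$ produces sources of size $t\,|\p^{\le2}v|$ on a compact set. The available local energy decay, however, is only $\w{t}^{-1}$ (Lemma \ref{lem:loc:decay}). Even with the bound $\|\p^{\le2}v\|_{L^2(\cK_R)}\ls\ve\w{t}^{\tilde\ve-1}$ that the paper eventually proves, the best one can say about these sources is $\int_0^{\ve^{-2}}\ve\w{t}^{\tilde\ve}dt\gtrsim\ve^{-1}$. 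This is why the paper uses no $S$ or $L$ at all.

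The ghost weight is also unnecessary. Since $\|\p^2u\|_{L^\infty}\ls\ve\w{t}^{-1/2}$, the Gronwall factor $\exp(C\ve_1\w{t}^{1/2})$ is automatically bounded for $t\le B^2/\ve^2$. The paper simply absorbs it via \eqref{BA1} into the gap between $\ve^{3/2}$ and $\ve^{1.4}$.

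The missing ingredient is how to control, using only $Z=\{\p,\Omega\}$, the boundary terms coming from fields that do not preserve $\cB$.

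\begin{itemize}
\item The $\p_t^i$-energies have no boundary terms; the admissible condition removes the quasilinear one for Neumann. Then \eqref{ellip} recovers the $\p_x$-derivatives.
\item The $\Omega$- and $\p_x$-energies, by contrast, carry $\int_0^t\|\p\p^bv\|^2_{L^2(\cK_2)}ds$. This is $O(\ve^{3-})$ up to $t\sim\ve^{-2}$ only if the local norm of $v$ is both $\ve^{3/2-}$-small and square integrable in time: $\|\p^{\le2N-4}v\|_{L^2(\cK_R)}\ls\ve^{3/2-13\tilde\ve}\w{t}^{-1/2-\tilde\ve}$.
\item The paper obtains this by applying the divergence-form estimate \eqref{loc:ibvp} to $u$ instead of $v$, so $R_{ap}$ never enters. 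It uses two facts: $u_{ap}\equiv0$ near $\cO$ for $t\ge2/\ve$, thanks to the cutoff $\chi(-3\ve\sigma)$; and $\ve\w{t}^{-1}\le\ve^{3/2}\w{t}^{-1/2}$ once $t\ge1/\ve$. The linear rate $\ve\w{t}^{-1}$ used from $t=0$ would only give $O(\ve^2)$ here, which does not close.
\item Then \eqref{Sobo:ineq} gives $|\p Z^av|\ls\ve^{3/2-}\w{x}^{-1/2}$.
\item Finally, \eqref{dpw:ibvp} applied to $\Box\tilde Z^au$ in divergence form supplies the $\w{t-|x|}^{-1}$ decay in place of Klainerman--Sobolev. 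This tier is only $O(\ve\w{t}^{2\tilde\ve})$, because $(\p u_{ap})^2$ contributes $\ve^2\w{t}^{1/2}\ls\ve$ in the weighted norm. So it closes through the choice of $C_0$, not through the $\ve^{3/2}$ margin you invoke.
\end{itemize}

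Two smaller points:
\begin{itemize}
\item With $s=\ve\sqrt t$, cancellation in $\mathcal R$ requires $\p_s\p_\sigma G=Q_{1st}(\tilde\omega)\p_\sigma G\,\p^2_\sigma G$, as in \eqref{profile-eqn}. Your factor $\frac12$ would move the blowup to $2\tau_0$ and leave a residual whose time-integrated $L^2$ size is $O(\ve)$.
\item The correction $\ve^2w_2$ is unnecessary. Switching at $t\sim\ve^{-1}$ already costs only $\int_0^{1/\ve}\ve^2\w{t}^{-1/2}dt\ls\ve^{3/2}$.
\end{itemize}
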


\begin{theorem}\label{thm2}
Under the same conditions of Theorem \ref{thm1},
for the problem
\begin{equation}\label{rad:wave}
\left\{
\begin{aligned}
&\p_t^2u-c^2(\p_tu)\Delta u=0,\hspace{2.6cm} (t,x)\in(0,+\infty)\times\{x: |x|\ge1\},\\
&\cB u(t,x)=0,\hspace{4cm} (t,x)\in(0,+\infty)\times\{x: |x|=1\},\\
&(u,\p_tu)(0,x)=(u_0,u_1)(\ve,x)=(u_0,u_1)(\ve,|x|),\qquad\qquad  |x|\ge1
\end{aligned}
\right.
\end{equation}
with $c(0)=1$ and $c'(0)\neq0$, the lifespan $T_\ve$ of the radial smooth solution $u\in C([0,T_{\ve}), H^{2N+1}(\cK))\cap C^1([0,T_{\ve}), H^{2N}(\cK))$ to \eqref{rad:wave} satisfies
\begin{equation}\label{thm2:lifespan}
\limsup_{\ve\rightarrow 0+}\ve\sqrt{T_\ve}\le\tau_0=\big(\max\limits_{\sigma}2c'(0)\p_\sigma^2F_0(\sigma)\big)^{-1},
\end{equation}
where $F_0(\sigma)=F_0(\sigma,\theta)=\ds\lim_{r\rightarrow+\infty}\sqrt{r}w_{B}(r-\sigma,r)$ is independent of
$\theta$ and $w_{B}(t,r)$ corresponds to the radial symmetric solution of \eqref{LWE}
with the initial data $(w_{B},\p_tw_{B})(0,x)=(u_0^1,u_1^1)(r)$.
\end{theorem}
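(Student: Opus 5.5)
We prove the upper bound in Theorem \ref{thm2} by contradiction, comparing the radial solution with an explicit approximate solution and then running a characteristic blowup argument. First observe that, since $c(0)=1$, we have
\[
\p_t^2u-c^2(\p_tu)\Delta u=\Box u-2c'(0)\p_tu\,\Delta u+O(|\p u|^2|\p^2u|).
\]
With $\omega_0=-1$ this gives $Q_{1st}(\tilde\omega)=-2c'(0)\omega_0=2c'(0)$, so $\tau_0$ in \eqref{thm2:lifespan} coincides with the constant in \eqref{thm1:lifespan}. Fix $\tau_1>\tau_0$ and suppose, along a sequence $\ve\to0$, that $T_\ve\ge\tau_1^2/\ve^2$. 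We aim for a contradiction.

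\textbf{Step 1 (approximate solution).} Work in the slow time $s=\ve\sqrt{t}\in[0,\tau_1]$, following the construction used for Theorem \ref{thm1}. On the interior region $t\lesssim\ve^{-2+\delta}$, or $r\le t/2$, one has $u\approx\ve w_B$. Near the light cone, $u\approx \ve r^{-1/2}G(\sigma,s)$, where $G$ solves the inviscid Burgers-type equation
\[
\p_sG=-\tau\text{-independent factor}\cdot c'(0)\,\p_\sigma G\,\p_\sigma^2\text{-type},\qquad G(\sigma,0)=F_0(\sigma).
\]
Written for $v=\p_\sigma G$, this becomes $\p_s v - c'(0)\,\p_\sigma(v^2)=0$ after normalising the constant so that the blowup time of $\p_\sigma v$ is exactly $\tau_0$. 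Since $\tau_0$ is defined via $\max\p_\sigma^2F_0$, the approximate solution $\p_\sigma^2 G$ becomes unbounded as $s\to\tau_0$. For $s<\tau_0$, the stability results from the proof of Theorem \ref{thm1} (weighted energy estimates, and $L^\infty$ decay estimates in exterior domains) give
\[
\p u=\ve r^{-1/2}\p_\sigma G\,\tilde\omega+o(\ve t^{-1/2})\quad\text{uniformly.}
\]

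\textbf{Step 2 (blowup via radial characteristics).} In the radial case, set $L_\pm=\p_t\pm c(\p_tu)\p_r$. The equation becomes a $2\times2$ system for $R_\pm=\sqrt r\,(\p_t u\pm c\,\p_ru)$, with lower order terms $O(r^{-1})$ coming from $\tfrac{1}{r}\p_r u$. The boundary at $r=1$ is irrelevant far out near the cone. Along the outgoing characteristic starting at time $t_0=\ve^{-2}s_0^2$, with $s_0<\tau_0$ close to $\tau_0$ and the point where $\p_\sigma^2F_0$ is maximised, we obtain a Riccati inequality for $\p_r R_+$ (or $\p_\sigma$ of it):
\[
\frac{d}{dt}\,\p R_+\ \ge\ \frac{c'(0)}{\sqrt t}\,(\p R_+)^2-\text{errors}.
\]
Integrating in $\sqrt t$, blowup occurs before $s=\tau_0+o(1)<\tau_1$, provided the initial value at $t_0$ matches the Burgers profile. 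That matching is exactly what Step 1 guarantees.

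\textbf{Main obstacle.} The hardest part is controlling errors in the Riccati comparison: the incoming component $R_-$, the $r^{-1}$ terms and the cubic terms must be shown to contribute $o(1)$ in the $s$-scale uniformly up to the blowup time. We would handle this by bootstrap. First, $R_-=O(\ve t^{-1})$, using incoming-characteristic integration and the smallness of data at time $t_0$. Second, the $r^{-1/2}$ weighting makes the extra terms integrable in $t$ over $[t_0,C\ve^{-2}]$, with total size $O(\ve^{\delta})$. With these bounds, the Riccati solution blows up at a time arbitrarily close to $\tau_0$, which gives $\limsup\ve\sqrt{T_\ve}\le\tau_0$.
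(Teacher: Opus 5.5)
Your overall architecture is the right one: argue by contradiction, match with the Burgers profile, and get blowup from a Riccati equation along an outgoing characteristic via Lemma \ref{blowup-lem}. However, the step you call the main obstacle is a genuine gap, and the mechanism you propose for it would not close.

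To apply the Riccati lemma on $[t_0,T]$ with $T$ up to $\tau_1^2/\ve^2$, you need certain bounds on a time range extending past $\tau_0^2/\ve^2$. There the estimates of Theorem \ref{thm1} are unavailable, since they hold only for $t\le B^2/\ve^2-1$ with $B<\tau_0$. The bounds you need are the first-order bound $\sqrt t\,|\p u|\lesssim\ve$, which also controls $c(\p_tu)$ and hence the characteristic geometry, and smallness of the incoming component. You need them while the Riccati variable itself becomes unbounded. Yet every transport equation along incoming characteristics that could give these bounds has that variable as a source:
\begin{itemize}
\item $L_2\p_r(\sqrt r u)=-2cW_1$;
\item $L_2W_2$ in \eqref{W-eqn} contains $-\frac{cc'}{\sqrt r}W_1W_2$ and $-\frac{c'}{4r}\p_tu\,W_1$;
\item in your own variables, $(\p_t-c\p_r)(\p_tu+c\p_ru)=\frac{c^2}{r}\p_ru+\p_ru\,(\p_t-c\p_r)c$, and the last term is the bad second derivative.
\end{itemize}
A pointwise bootstrap (``incoming-characteristic integration'' plus integrability of $r^{-1/2}$ weights) therefore needs a pointwise bound on the very quantity that is supposed to blow up. Run with a threshold, it can only show that second derivatives become large, not that the solution stops existing.

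The missing idea is John's $L^1$ control of the Riccati variable. One has $d(|W_1|(dr-c\,dt))=\sgn W_1\,(L_1W_1+W_1\p_rc)\,dt\wedge dr$, and $W_1\p_rc$ cancels exactly the quadratic terms $\frac{cc'}{\sqrt r}(W_1^2-W_1W_2)$ of $L_1W_1$. This gives \eqref{Diff-Form}, whose right side is linear in $W_1$ with small coefficients. Apply Stokes' theorem on the band $D$ (its lateral sides are outgoing characteristics, where $dr-c\,dt=0$) and on the triangles $D_1$. This gives $\int_D|W_1|\,dr\lesssim\ve$ and $\int_{\Gamma^-_\mu}|W_1|(dr-c\,dt)\lesssim\ve$, however large $W_1$ is pointwise. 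Together with the bounded crossing time of incoming characteristics (Lemma \ref{Lem6.2}), this closes the bootstrap for $X,Y,Z$ in Lemma \ref{Lem6.4}. It then yields $\int|a_1|\lesssim\ve^{3/2}$ and $\int|a_2|\lesssim\ve^2$. Exact Riemann invariants $\p_ru\pm h(\p_tu)$ with $h'=1/c$, combined with Lax's integrating factor, could plausibly substitute, but some device of this kind is indispensable and is absent from your plan.

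Two smaller points.
\begin{itemize}
\item Your labels are swapped. For outgoing waves $R_+$ is the small component, and $R_-\approx-2\ve\p_\sigma G$ is the one transported by $L_+$. So it is $R_+$ that should be $O(\ve t^{-1})$. The variable with Riccati coefficient $c'(0)/\sqrt t$ is $-\frac12\p_rR_-\approx\ve\p_\sigma^2G$, essentially the paper's $W_1$; this normalization matters for the sharp constant.
\item The Riccati datum is second order, so the first-order matching of Step 1 is insufficient. However, \eqref{loc:energyC1} does give $\sqrt r\,|\p^2(u-u_{ap})|=o(\ve)$.
\end{itemize}
Also, starting near $\tau_0$ buys nothing. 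The paper starts at $t=1/\ve$, where $u_{ap}=\ve w_B$ and \eqref{LWE-error} give $W_1=\ve F_0''(\sigma_0)+O(\ve^{1.4})$ directly, and it lets the Riccati equation itself carry the Burgers dynamics.
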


\begin{remark}
For the existence of $F_0(\sigma,\theta)$ in \eqref{def:F0} and the positivity of $\tau_0$ in \eqref{thm1:lifespan},
one can see Proposition \ref{prop31} and Remark \ref{rmk:tau} below, respectively.
\end{remark}

\begin{remark}\label{Rem-1.2}
Note that $Q_{1st}(\tilde\omega)=2c'(0)$ for problem \eqref{rad:wave}.
Therefore, $\tau_0$ in \eqref{thm2:lifespan} just coincides with that in \eqref{thm1:lifespan}
for the radial solution of \eqref{rad:wave}.
This implies $\ds\lim_{\ve\rightarrow 0+}\ve\sqrt{T_\ve}=\tau_0$.

On the other hand, for the general problem \eqref{QWE}, if there exists a unique point $(\sigma_0, \theta_0)$
such that $\tau_0=Q_{1st}(\tilde\omega_0)\p_\sigma^2F_0(\sigma_0,\theta_0
)=\max\limits_{(\sigma,\theta)}Q_{1st}(\tilde\omega)\p_\sigma^2F_0(\sigma,\theta)$
with $\tilde\omega_0=(-1, cos\theta_0, sin\theta_0)$ and the following generic non-degenerate condition holds
\begin{equation}\label{generic}
\nabla_{\sigma,\theta}\big(Q_{1st}(\tilde\omega)\p_\sigma^2F_0(\sigma,\theta)\big)|_{(\sigma,\theta)=(\sigma_0,\theta_0)}=0,\quad
\nabla^2_{\sigma,\theta}\big(Q_{1st}(\tilde\omega)\p_\sigma^2F_0(\sigma,\theta)\big)|_{(\sigma,\theta)=(\sigma_0,\theta_0)}<0,
\end{equation}
then as in the proof of \cite{Alinhac99} together with the basic techniques in the paper,
one can establish that $\ds\lim_{\ve\rightarrow 0+}\ve\sqrt{T_\ve}=\tau_0$
holds for Theorem \ref{thm1} under the condition \eqref{generic}.
\end{remark}

\begin{remark}
When the nonlinearity in \eqref{nonlinear} satisfies the first null condition (i.e., $Q_{1st}(\tilde\omega)\equiv0$),
the global existence of small data
smooth solutions of problem  \eqref{QWE} has been studied in \cite{HouYinYuan24}
and \cite{HouYinYuan26}.
\end{remark}

\begin{remark}
The result in Theorem \ref{thm1} can be directly applied to the initial boundary value problem of 2-D isentropic, irrotational compressible Euler equations and 3-D steady isentropic, irrotational supersonic Euler equations for the polytropic gases in exterior domains with impermeable
boundary conditions, see the detailed introductions for these related models in Section 1 of \cite{HouYinYuan26}.
\end{remark}

\begin{remark}
For problem  \eqref{QWE} with the Dirichlet boundary condition, by establishing a
Morawetz type energy estimate for the 2-D perturbed inhomogeneous wave
equation in the exterior domain, the authors in \cite{LRX25} have
proved that the lifespan $T_{\ve}$ fulfills
\begin{equation}\label{Lai-NA}
T_{\ve}\ge\frac{C}{\ve^2|\ln\ve|^3}.
\end{equation}
It is obvious  that the estimate \eqref{Lai-NA} contains the loss of $|\ln\ve|^3$ than the lifespan ${\bar T}_\ve\ge\frac{C}{\ve^2}$ for the corresponding Cauchy problem (see \cite[Theorem 6.5.8]{Hormander97book}).
Note that the Dirichlet boundary condition in \eqref{nonlinear} has played a key role in the
proof procedure of \cite{LRX25} (see Lemma 3.1 of \cite{LRX25}).
On the other hand, the sharp upper bounds of ${\bar T}_\ve$ for the 2-D Cauchy problem and $T_\ve$
for the 2-D radial exterior domain problem have been obtained in \cite{Alinhac99} and \eqref{thm2:lifespan} in Theorem \ref{thm2}, respectively.
Therefore, as explained in Remark \ref{Rem-1.2}, the estimate of $T_{\ve}$ in Theorem \ref{thm1} is actually optimal.
\end{remark}

\begin{remark}
Note that the cubic and higher order nonlinearities in the more general quadratic form $Q(\p u, \p^2 u)$
do not affect the limit estimate of $T_{\ve}$ in \eqref{thm1:lifespan}.
Then Theorem \ref{thm1} still holds for 
\begin{equation}\label{genra:NL}
Q(\p u,\p^2u)=\sum_{\alpha,\beta,\mu=0}^2Q^{\alpha\beta\mu}\p^2_{\alpha\beta}u\p_{\mu}u
+\sum_{\alpha,\beta,\mu,\nu=0}^2Q^{\alpha\beta\mu\nu}\p^2_{\alpha\beta}u\p_{\mu}u\p_{\nu}u+O(|(\p u,\p^2u)|^4),
\end{equation}
where $Q^{\alpha\beta\mu\nu}$ are constants and $Q^{\alpha\beta\mu\nu}=Q^{\beta\alpha\mu\nu}=Q^{\alpha\beta\nu\mu}$.
Set
\begin{equation}\label{2nd:NC}
Q_{2nd}(\tilde\omega)=\sum_{\alpha,\beta,\mu,\nu=0}^2Q^{\alpha\beta\mu\nu}\omega_\alpha\omega_\beta\omega_\mu\omega_\nu.
\end{equation}
If $Q_{1st}(\tilde\omega)\equiv0$ and $Q_{2nd}(\tilde\omega)\not\equiv0$,
as in the corresponding Cauchy problem (see \cite[Theorem 1]{Alinhac01a}),
one can guess that problem \eqref{QWE} with the nonlinearity $Q(\p u,\p^2u)$ in \eqref{genra:NL} admits such a sharp lifespan
$T_{\ve}$ satisfying
\begin{equation}\label{rmk:tau0}
\liminf_{\ve\rightarrow 0+}\ve^2\ln{T_\ve}\ge\tilde\tau_0=\big(\max\limits_{(\sigma,\theta)}Q_{2nd}(\tilde\omega)\p_\sigma F_0(\sigma,\theta)\p_\sigma^2F_0(\sigma,\theta)\big)^{-1}>0.
\end{equation}
This will be given in our forthcoming paper by rather different methods from those in the present paper
because the estimates here are not available directly.
\end{remark}

\subsection{Previous results}

For the quadratic quasilinear wave equations, consider the following Cauchy problem or the initial boundary value problem in exterior domains
\begin{equation}\label{QWE:Cauchy}
\left\{
\begin{aligned}
&\Box w+\sum_{\alpha,\beta,\mu=0}^nQ^{\alpha\beta\mu}\p^2_{\alpha\beta}w\p_{\mu}w
+\sum_{\al,\beta,\mu,\nu=0}^nQ^{\alpha\beta\mu\nu}\p^2_{\alpha\beta}w\p_{\mu}w\p_{\nu}w=0 ,\quad t>0,\, x\in\R^n,\\
&(w,\p_tw)(0,x)=(\ve w^C_0,\ve w^C_1)(x),\qquad \qquad \qquad \qquad \qquad \qquad \qquad \qquad x\in\R^n
\end{aligned}
\right.
\end{equation}
or
\begin{equation}\label{QWE:exterior}
\left\{
\begin{aligned}
&\Box w+\sum_{\alpha,\beta,\mu=0}^nQ^{\alpha\beta\mu}\p^2_{\al\beta}w\p_{\mu}w
+\sum_{\alpha,\beta,\mu,\nu=0}^nQ^{\alpha\beta\mu\nu}\p^2_{\alpha\beta}w\p_{\mu}w\p_{\nu}w=0 ,\quad t>0,\, x\in\cK,\\
&\cB w=0,\qquad \qquad \qquad \qquad \qquad \qquad \qquad \qquad \qquad \qquad \qquad t>0, x\in\p\cK,\\
&(w,\p_tw)(0,x)=(\ve w^B_0,\ve w^B_1)(x),\qquad \qquad \qquad \qquad \qquad \qquad \qquad x\in\cK,
\end{aligned}
\right.
\end{equation}
where $(w^C_0,w^C_1)\in C_0^{\infty}(\R^n)$, $n\ge 2$, $(w^B_0,w^B_1)\in C_0^{\infty}(\cK)$, $\ve>0$ is small,
$\cK=\R^n\setminus\cO$, the obstacle $\cO\subset\R^n$ is compactly convex and contains the origin, the boundary $\p\cK=\p\cO$ is smooth,
$\cB$ represents ${\rm Id}$ or $\frac{\p}{\p\bn}$ for the Dirichlet or the Neumann boundary conditions, respectively,
and $\bn=(0, n_1,\cdot\cdot\cdot, n_n)$ is the unit outer normal direction of $[0,+\infty)\times\p\cK$.
The lifespans of smooth solutions to problem \eqref{QWE:Cauchy}, the Dirichlet boundary problem or the Neumann boundary problem in \eqref{QWE:exterior} are denoted by $T_{\ve}^{C}$,  $T_{\ve}^{D}$ or $T_{\ve}^{N}$, respectively.
In addition, the related compatible conditions for the initial boundary values in \eqref{QWE:exterior}
hold. On the other hand, for the Neumann boundary condition in \eqref{QWE:exterior}, the corresponding admissible condition \eqref{admiss:condition} is also required. For $n=3$,
set $Q(\tilde\omega)=\ds\sum_{\alpha,\beta,\mu=0}^3Q^{\alpha\beta\mu}\omega_{\alpha}\omega_{\beta}\omega_{\mu}$
with $\tilde\omega=(-1, \omega)$ and $\omega=(\omega_1, \omega_2, \omega_3)\in\Bbb S^2$.
The previous well known conclusions on $T_{\ve}^{C}, T_{\ve}^{D}$ and $T_{\ve}^{N}$
can be roughly listed as follows:
\begin{table}[!h]
\renewcommand\arraystretch{1.2}
\begin{tabular}{|c|c|c|c|c|}
\hline
Dimensions & Quadratic nonlinearity & Cauchy problem & Dirichlet problem & Neumann problem\\
\hline
$n\ge4$ &  & $\ds T_{\ve}^C=\infty$ & $\ds T_{\ve}^D=\infty$ &$\ds T_{\ve}^N=\infty$ \\
\hline
\multirow{2}{*}{$n=3$} & $Q(\tilde\omega)\equiv0$ & $\ds T_{\ve}^C=\infty$ & $\ds T_{\ve}^D=\infty$&$\ds T_{\ve}^N=\infty$ \\
\cline{2-5}
 & $Q(\tilde\omega)\not\equiv0$  & $\ds T_{\ve}^C\ge e^{\frac{C}{\ve}}$ & $\ds T_{\ve}^D\ge e^{\frac{C}{\ve}}$
 & $\ds T_{\ve}^N\ge e^{\frac{C}{\ve}}$
 \\
\hline
\multirow{3}{*}{$n=2$} & $Q_{1st}(\tilde\omega)\equiv0, Q_{2st}(\tilde\omega)\equiv0$ & $\ds T_{\ve}^C=\infty$ & $\ds T_{\ve}^D=\infty$ &{\color{red}$\ds T_{\ve}^N=\infty$~Open} \\
\cline{2-5}
 & $Q_{1st}(\tilde\omega)\not\equiv0$  & $\ds T_{\ve}^C\ge C/\ve^2$ & {\color{red}$\ds T_{\ve}^D\ge C/\ve^2$~Open} &
{\color{red}$\ds T_{\ve}^N\ge C/\ve^2$~Open} \\
\cline{2-5}
 & $Q_{1st}(\tilde\omega)\equiv0, Q_{2st}(\tilde\omega)\not\equiv0$ & $\ds T_{\ve}^C\ge e^{C/\ve^2}$ & {\color{red}$\ds T_{\ve}^D\ge e^{C/\ve^2}$~Open}&
 {\color{red}$\ds T_{\ve}^N\ge e^{C/\ve^2}$~Open} \\
\hline
\end{tabular}
\end{table}

Next we give a detailed illustration on the table above.

$\bullet$  When $n\ge4$, it has been known that both the problems \eqref{QWE:Cauchy} and \eqref{QWE:exterior} admit global smooth
small data  solutions, i.e., $T_{\ve}^{C}, T_{\ve}^{D}, T_{\ve}^{N}=\infty$, one can see \cite{Hormander97book,Klainerman80,KP83,Li-Chen} for the Cauchy problem \eqref{QWE:Cauchy} and \cite{Chen89,Hayashi95,MetcalfeSogge06,Shib-Tsuti85,Shib-Tsuti86} for the Dirichlet/Neumann boundary problem \eqref{QWE:exterior}, respectively.

\vskip 0.2 true cm

$\bullet$ When $n=3$ and the null condition fails (i.e., $Q(\tilde\omega)\not\equiv0$), the sharp lower bounds $T^C_{\ve}, T_{\ve}^D, T_{\ve}^N\ge e^{C/\ve}$ have been shown
for problem \eqref{QWE:Cauchy} (see \cite{JohnKlainerman84}) and problem \eqref{QWE:exterior} (see \cite{Godin89,Godin08,KSS04,MetcalfeSogge06,Yuan22}), respectively.
Moreover, the estimate of the lifespan like \eqref{thm1:lifespan} for the 3-D initial boundary value problem \eqref{QWE:exterior}
has also been established in \cite{Godin08,Godin09,KK13}.
When the null condition holds (that is, $Q(\tilde\omega)\equiv0$), the global existence of small data smooth solutions
has been obtained, see \cite{Christodoulou86,Klainerman} for \eqref{QWE:Cauchy} and \cite{Godin95,KK08,KSS00,Li-Yin18,MetcalfeSogge05} for \eqref{QWE:exterior}, respectively.

\vskip 0.2 true cm

$\bullet$ When $n=2$, $Q^{\alpha\beta\mu}\equiv0$ for all $\alpha,\beta,\mu=0,1,2$ and
the second null condition holds for \eqref{QWE:Cauchy} and \eqref{QWE:exterior} (that is, $Q_{2nd}(\tilde\omega)\equiv0$),
the small data smooth solutions exist globally, see \cite{Katayama95} for \eqref{QWE:Cauchy} and \cite{Kubo13,HouYinYuan26jde} for \eqref{QWE:exterior}, respectively.
When $Q^{\alpha\beta\mu}\equiv0$ for all $\alpha,\beta,\mu=0,1,2$ and
the second null condition fails (i.e., $Q_{2nd}(\tilde\omega)\not\equiv0$) for \eqref{QWE:Cauchy} and \eqref{QWE:exterior},
the estimates on the lower bound of the lifespan have also been studied, see \cite{Godin93,Kovalyov87,Hoshiga95}
for $T_{\ve}^{C}\ge e^{C/\ve^2}$, \cite{Kubo14} for $T_{\ve}^{D}\ge e^{C/\ve^2}$ and \cite{KKL13} for $T_{\ve}^{N}\ge e^{C/\ve}$, respectively.

\vskip 0.2 true cm

$\bullet$ When $n=2$ and $Q^{\alpha\beta\mu}\neq0$ for some $\alpha,\beta,\mu=0,1,2$, the lifespan verifies$\ds\lim_{\ve\rightarrow 0+}\ve\sqrt{T_\ve^C}=\tau_0$ if $Q_{1st}(\tilde\omega)\not\equiv0$ in \eqref{1st:fail}, $\ds\liminf_{\ve\rightarrow 0+}\ve^2\ln{T_\ve^C}=\tilde\tau_0$ if $Q_{1st}(\tilde\omega)\equiv0$ but $Q_{2nd}(\tilde\omega)\not\equiv0$ and $T^C_{\ve}=\infty$ if $Q_{1st}(\tilde\omega)\equiv0$ and $Q_{2nd}(\tilde\omega)\equiv0$, where $\tau_0$ and $\tilde\tau_0$ are given in \eqref{thm1:lifespan} and \eqref{rmk:tau0}, respectively, see \cite{Alinhac99,Alinhac01a,Alinhac01b,Hormander97book}.
For problem \eqref{QWE:exterior}, $T^D_{\ve}=\infty$ has been obtained in \cite{HouYinYuan24} with both null conditions (i.e., $Q_{1st}(\tilde\omega)\equiv0$ and $Q_{2nd}(\tilde\omega)\equiv0$).
If the first null condition fails ($Q_{1st}(\tilde\omega)\not\equiv0$), the authors in \cite{LRX25} show $T^D_\ve\ge\frac{C}{\ve^2|\ln \ve|^3}$
for problem \eqref{QWE:exterior}, which contains $|\ln \ve|^{-3}$ loss than the corresponding Cauchy problem.
Recently, for the 2-D Cauchy-Neumann problem \eqref{QWE:exterior},
the authors in \cite{HouYinYuan26} have proved that $T^N_\ve\ge\frac{C}{\ve^{2-}}$ for $Q_{1st}(\tilde\omega)\not\equiv0$,
$T^N_\ve=\infty$ for the quadratic $Q_0$-type null nonlinearity and $Q_{2nd}(\tilde\omega)\equiv0$.

\vskip 0.2 true cm

The aim of this paper is to improve the estimates of the lifespan in \cite{HouYinYuan26} and \cite{LRX25} for $Q_{1st}(\tilde\omega)\not\equiv0$
and further complete the proof on the open problems of $\ds T_{\ve}^D\ge C/\ve^2$ and $\ds T_{\ve}^N\ge C/\ve^2$.

\vskip 0.2 true cm
\subsection{Sketches of proofs}

Next we give some comments on the proofs of Theorem \ref{thm1} and Theorem \ref{thm2}.
As pointed in \cite{KSS00}-\cite{KSS04} and \cite{HouYinYuan24,HouYinYuan26}, the crucial Lorentz boost fields $t\p_i+x_i\p_t$ $(i=1,2)$
and the scaling vector field $t \p_t+x_1\p_1+x_2\p_2$ can not be used to treat the 2-D initial boundary value problem \eqref{QWE} directly
due to the appearance of the boundary $[0,+\infty)\times\p\cK$. Meanwhile, the lower time decay rate $(1+t)^{-1}\ln(2+t)$
of the local energy for the 2-D wave equation
near the boundary is of another essential difficulty in obtaining the sharp lifespan of problem \eqref{QWE}
(see (1.10a) of Theorem 1.2 in \cite{HouYinYuan26}), which is essentially different from the
3-D problem  with the exponential decay rate of the local energy (see \cite[Lemma 4.3]{KSS04} and \cite[Lemma 2.8]{MetcalfeSogge05}).
On the other hand, the fundamental KSS estimates in 3-D problem \cite{KSS04,MetcalfeSogge05,MetcalfeSogge06} fail in treating
the related 2-D problem, one can refer to \cite[Page 485]{Hepd-Metc23} for detailed discussions.

It is noteworthy that for the proof of  $T^D_{\ve}\ge\frac{C}{\ve^2|\ln\ve|^3}$ in \cite{LRX25} by the energy method,
the key estimate in Lemma 3.1 of \cite{LRX25} depends heavily on the Dirichlet boundary condition
and can not be utilized to study the corresponding Cauchy-Neumann problem \eqref{QWE} directly
(see page 10 of \cite{LRX25} for details), moreover, only $T^D_{\ve}\ge\frac{C}{\ve^2|\ln\ve|^3}$
rather than $T^D_{\ve}\ge\frac{C}{\ve^2}$ is obtained in \cite{LRX25}.
In addition, through establishing a series of $L^\infty-L^\infty$ estimates and energy estimates
for the Neumann-Cauchy problem in \eqref{QWE} (see Sections 3-4 in \cite{HouYinYuan26}),
the authors in \cite{HouYinYuan26} only prove $T^N_{\ve}\ge\frac{C}{\ve^{2-}}$.
In order to prove $T^D_{\ve}\ge\frac{C}{\ve^{2}}$ and $T^N_{\ve}\ge\frac{C}{\ve^{2}}$ in Theorem \ref{thm1},
it is required to introduce new techniques and ideas. Motivated by the proof of $T^C_{\ve}\ge\frac{C}{\ve^{2}}$
for problem \eqref{QWE:Cauchy} in Section 6.5 of \cite[Chapter 6]{Hormander97book},
we will take the following ingredients.

\vskip 0.2 true cm
{\bf $\bullet$ Introduction of the Friedlander radiation field for the IBVP of 2-D linear wave equation}
\vskip 0.1 true cm
Consider the Cauchy problem of 2-D linear wave equation
\begin{equation}\label{YH-2}
\left\{
\begin{aligned}
&\Box w_{C}=0,\qquad   \qquad \qquad  \qquad (t,x)\in[0,+\infty)\times\Bbb R^2,\\
&(w_{C},\p_tw_{C})(0,x)=(u_0^1,u_1^1)(x), \qquad \quad x\in\Bbb R^2.
\end{aligned}
\right.
\end{equation}
Set
\begin{equation*}\label{loc:ibvp:pf3}
\chi_+^{-\f12}(s)=\left\{
\begin{aligned}
&\ds\f{s^{-\f12}}{\Gamma(\f12)}\quad\text{for $s>0$},\\
&0,\quad \quad\text{for $s\le 0$}.
\end{aligned}
\right.
\end{equation*}
The Radon transformation of function $f(x)$ is defined as
$$R(s, \theta; f)=\int_{<\omega,y>=s}f(y)dS(y).$$
Define the Friedlander radiation field for \eqref{YH-2} as
\begin{equation}\label{YH-3}
\tilde F_0(\sigma, \theta)=\f{1}{2\sqrt{2\pi}}\chi_+^{-\f12}*(R(\cdot, \theta; u_1^1)-R(\cdot, \theta; u_0^1)'),
\end{equation}
where the convolution and differentiation are taken in the variable $\sigma$ indicated by a dot.
It follows from Theorem 6.2.1 of \cite{Hormander97book} that for $r\ge ct+1$, the fixed constant $c\in(0,1)$, $t\ge 1$ and $\sigma=r-t\in(-\infty,M_0]$,
\begin{equation}\label{YH-4}
|L^I(w_C(t,x)-r^{-\f12}\tilde F_0(\sigma, \theta))|\le C_{I}(1+|\sigma|)^{\f12}t^{-\f32},
\end{equation}
where $L\in\{\p, t \p_t+x_1\p_1+x_2\p_2, t\p_1+x_1\p_t, t\p_2+x_2\p_t, x_1\p_2-x_2\p_1\}$.
It is pointed out that the property of $\tilde F_0(\theta, \sigma)$ plays a key role in determining
the sharp lifespan $T_{\ve}^C$ of problem \eqref{QWE:Cauchy} when $Q_{1st}(\tilde\omega)\not\equiv0$. That is, in terms of Theorem 6.5.8 of \cite{Hormander97book}
and the result in \cite{Alinhac99}, it holds that
\begin{equation}\label{YH-5}
\lim_{\ve\rightarrow 0+}\ve\sqrt{T_\ve^C}=\tilde\tau_0=\big(\max\limits_{(\sigma,\theta)}Q_{1st}(\tilde\omega)\p_\sigma^2\tilde F_0(\sigma, \theta)\big)^{-1}>0.
\end{equation}
Note that by \eqref{YH-4}, $\tilde F_0(\sigma, \theta)$ can be also determined by
\begin{equation}\label{YH-6}
\tilde F_0(\sigma,\theta)=\lim_{r\rightarrow+\infty}\sqrt{r}w_{C}(r-\sigma,r\omega).
\end{equation}
Motivated by \eqref{YH-6}, for the linear IBVP \eqref{LWE}, we formally define
the Friedlander radiation field for \eqref{LWE} as
\begin{equation}\label{def:F0-1}
F_0(\sigma,\theta)=\lim_{r\rightarrow+\infty}\sqrt{r}w_{B}(r-\sigma,r\omega).
\end{equation}
Indeed, the existence of $F_0(\sigma,\theta)$ in \eqref{def:F0-1} can be shown, meanwhile it holds that
for $r\ge ct+1$, the fixed constant $c\in(0,1)$, $t\ge 1$, $\sigma\in(-\infty,M_0]$,
\begin{equation}\label{LWE-error-0}
\begin{split}
|Z^I(w_{B}(t,x)-r^{-1/2}F_0(\sigma,\theta))|&\le C_I t^{-1}\ln(2+t),\\
|\p Z^I(w_{B}(t,x)-r^{-1/2}F_0(\sigma,\theta))|&\le C_I (1+|\sigma|)^{-\f12}t^{-\f32}\ln(2+t),
\end{split}
\end{equation}
where $Z\in\{\p,x_1\p_2-x_2\p_1\}$. One can see the details in Section \ref{sect3}.

By comparing \eqref{LWE-error-0} with \eqref{YH-4} near the light conic surface $\{(t,x): r-t=M_0\}$,
it is easy to know that the corresponding orders of approximation in \eqref{LWE-error-0} are worse due to the appearance  of the large
factor $\ln(2+t)$, moreover, only the partial vector fields  $\{\p,x_1\p_2-x_2\p_1\}$ in $\{\p, t \p_t+x_1\p_1+x_2\p_2, t\p_1+x_1\p_t, t\p_2+x_2\p_t, x_1\p_2-x_2\p_1\}$ can be used in \eqref{LWE-error-0}.

\vskip 0.2 true cm
{\bf $\bullet$ Construction of the approximate solution for problem \eqref{QWE}}
\vskip 0.1 true cm

Let $U(\tau,\sigma,\theta)$ be the solution of the following problem
\begin{equation}\label{profile-eqn-0}
\left\{
\begin{aligned}
&\p^2_{\tau\sigma}U-Q_{1st}(\tilde\omega)\p_{\sigma}U\p^2_{\sigma}U=0,\\
&U(0,\sigma,\theta)=F_0(\sigma,\theta),\\
&\supp_{\sigma} U\subset \{\sigma\leq M_0\}.
\end{aligned}
\right.
\end{equation}
Set $x=r(\cos\theta,\sin\theta)$, $\sigma=r-t$, $\tau=\ve\sqrt{1+t}$ and
\begin{equation}\label{app-solution}
u_{ap}(t,x)=\ve\chi(\ve t)w_{B}(t,x)+\ve(1-\chi(\ve t))\chi(-3\ve\sigma)r^{-1/2}U(\tau,\sigma,\theta),
\end{equation}
where $\chi(s)=1-\chi_{[1,2]}(s)$ with $0\le\chi_{[1,2]}(s)\le1$ and
  $\chi_{[1,2]}(s)=\left\{
    \begin{aligned}
    0,\qquad s\le 1,\\
    1,\qquad s\ge 1.
    \end{aligned}
    \right.$ Meanwhile, $\cB u_{ap}|_{[0,\tau_0)\times \p\cK}=0$ holds.
$u_{ap}(t,x)$ will be taken as the approximate solution of problem \eqref{QWE}.
This is rather analogous to the construction of the approximate solution for the 2-D Cauchy problem in \eqref{QWE:Cauchy}
(see Section 6.5 of \cite{Hormander97book} Chapter 6).

\vskip 0.2 true cm
{\bf $\bullet$ Precise estimates of the error $u-u_{ap}$ for problem \eqref{QWE}}
\vskip 0.1 true cm

Under the suitable bootstrap assumption for $x\in\cK$ and $t\in[0,B^2/\ve^2-1]$ with any fixed positive constant $B<\tau_0$,
\begin{equation}\label{BA-0}
\begin{split}
\sum_{|I|\le N}|\p Z^I(u-u_{ap})|&\le C_I\ve^{\f32-}\w{x}^{-1/2},\\
\sum_{|I|\le N}|\p Z^I(u-u_{ap})|&\le C_I\ve\w{x}^{-1/2}\w{t-|x|}^{-1}\w{t}^{0+},
\end{split}
\end{equation}
where $\w{x}=\sqrt{1+|x|^2}$ and the generic constant $C_I$ depends on $B$, we can arrive at (see Lemma \ref{YH-10} below)
\begin{equation}\label{intro-energy2-0}
\sum_{|I|\le2N-6}\|\p Z^I(u-u_{ap})\|_{L^2(\cK)}\le C_I\ve^{3/2-}.
\end{equation}
Then it follows from \eqref{intro-energy2-0} and the Sobolev embedding for $Z\in\{\p,x_1\p_2-x_2\p_1\}$ that (see Lemma \ref{YH-1})
\begin{equation}\label{intro-decay1-0}
\sum_{|I|\le2N-8}|\p Z^Iu(t,x)|\le C\ve^{3/2-}\w{x}^{-1/2}.
\end{equation}
Note that the field $Z$ in \eqref{intro-energy2-0} does not include $t\p_t+r\p_r$
and $t\p_i+x_i\p_t$ $(i=1,2)$, we can not obtain the suitable space-time decay estimate
from \eqref{intro-energy2-0} for $\p Z^I(u-u_{ap})$ by the following Klainerman-Sobolev inequality
\begin{equation*}
|\phi(t,x)|\le \f{C}{(1+|t-r|)^{\f12}(1+t)^{\f{1}{2}}}\ds\sum_{|I|\le 2}\|L^I\phi(t,x)\|_{L_x^2(\mathbb R^2)},
\end{equation*}
where $L\in\{\p, t \p_t+x_1\p_1+x_2\p_2, t\p_1+x_1\p_t, t\p_2+x_2\p_t, x_1\p_2-x_2\p_1\}$.
To overcome this difficulty, based on \eqref{intro-energy2-0}-\eqref{intro-decay1-0} and the divergence form of the nonlinearity $Q(\p u,\p^2u)$ in \eqref{nonlinear}
\begin{equation}\label{wave:div}
\begin{split}
\sum_{\alpha,\beta,\mu=0}^2Q^{\alpha\beta\mu}\p^2_{\alpha\beta}u\p_{\mu}u
=\frac12\sum_{\alpha,\beta,\mu=0}^2[\p_{\alpha}(Q^{\alpha\beta\mu}\p_{\beta}u\p_{\mu}u)+\p_{\beta}(Q^{\alpha\beta\mu}\p_{\alpha}u\p_{\mu}u)
-\p_{\mu}(Q^{\alpha\beta\mu}\p_{\alpha}u\p_{\beta}u)],
\end{split}
\end{equation}
together with a series of $L^\infty-L^\infty$ estimates and energy estimates,
we eventually obtain the following pointwise spacetime estimate for any positive constant $B<\tau_0$ and $t\le\frac{B^2}{\ve^2}-1$,
\begin{equation}\label{intro-decay4-0}
\sum_{|I|\le N}|\p Z^I(u-u_{ap})(t,x)|\le C\ve\w{x}^{-1/2}\w{t-|x|}^{-1}\w{t}^{0+}.
\end{equation}
This, together with \eqref{intro-decay1-0}, yields the proof for the bootstrap assumption \eqref{BA-0}.

\vskip 0.2 true cm

Through modifying the characteristics method in \cite{John} for treating the blowup of  the radial symmetric solution
to the 3-D quasilinear wave equation $\p_t^2u-c^2(\p_tu)\Delta u=0$ so that the corresponding 2-D radial exterior domain problem \eqref{rad:wave}
can be adapted, we then complete the proof of Theorem \ref{thm2} together with some estimates derived in Theorem \ref{thm1}.

\vskip 0.2 true cm

The paper is organized as follows.
In Section \ref{sect2}, some basic lemmas and useful conclusions on the pointwise spacetime estimates of solutions to 2-D
linear wave equations are listed or shown.
In Section \ref{sect3}, the existence and the estimate of the Friedlander radiation field are established.
In Section \ref{sect4}, the approximate solution is constructed and its related estimates are derived.
In Section \ref{sect5}, by establishing the delicate energy estimates and the local energy decay estimates for the error solution, the key pointwise spacetime estimates are improved and then the proof of Theorem \ref{thm1} is completed by the continuity argument.
In Section \ref{sect6}, Theorem \ref{thm2} is proved.

\vskip 0.2 true cm

\noindent \textbf{Notations:}
\begin{itemize}
  \item $\cK=\R^2\setminus\cO$, $\cK_R=\cK\cap\{x: |x|\le R\}$, $R>1$ is a fixed constant which may be changed in different places.
  \item For technical reasons and without loss of generality, one can assume $\p\cK\subset\{x:c_0<|x|<1/2\}$, where $c_0$ is a positive constant.
  \item The cutoff function $\chi_{[a,b]}(s)\in C^\infty(\R)$ with $a,b\in\R$ and $0<a<b$, $0\le\chi_{[a,b]}(s)\le1$ and
  \begin{equation*}
    \chi_{[a,b]}(s)=\left\{
    \begin{aligned}
    0,\qquad s\le a,\\
    1,\qquad s\ge b.
    \end{aligned}
    \right.
  \end{equation*}
  \item $\w{x}=\sqrt{1+|x|^2}$.
  \item $\tilde\ve=10^{-3}$.
  \item $\N_0=\{0,1,2,\cdots\}$ and $\N=\{1,2,\cdots\}$.
  \item $\p_0=\p_t$, $\p_1=\p_{x_1}$, $\p_2=\p_{x_2}$, $\p_x=\nabla_x=(\p_1,\p_2)$, $\p=(\p_t,\p_1,\p_2)$.
  \item $\Omega=x_1\p_2-x_2\p_1$,  $Z=\{Z_1,Z_2,Z_3,Z_4\}=\{\p_t,\p_1,\p_2,\Omega\}$, $\tilde Z=\{\tilde Z_1,\tilde Z_2,\tilde Z_3,\tilde Z_4\}$, $\tilde Z_1=\p_t$, $\tilde Z_i=\chi_{[1/2,1]}(x)Z_i,i=2,3,4$.
  \item $\p_x^a=\p_1^{a_1}\p_2^{a_2}$ for $a\in\N_0^2$, $\p^a=\p_t^{a_1}\p_1^{a_2}\p_2^{a_3}$ for $a\in\N_0^3$
      and $Z^a=Z_1^{a_1}Z_2^{a_2}Z_2^{a_3}Z_4^{a_4}$ for $a\in\N_0^4$.
  \item The commutator $[X,Y]=XY-YX$.
  \item For $f,g\ge0$, $f\ls g$ represents $f\le C_Bg$ for a generic positive constant $C_B$ depending on $B<\tau_0$ in \eqref{BA-0}.
  \item Denote $\|u(t)\|=\|u(t,\cdot)\|$ with norm $\|\cdot\|=\|\cdot\|_{L^2(\cK)},\|\cdot\|_{L^2(\cK_R)},\|\cdot\|_{H^k(\cK)}$.
  \item $\cW_{\mu,\nu}(t,x)=\w{t+|x|}^\mu(\min\{\w{x},\w{t-|x|}\})^\nu$ for $\mu, \nu\in\Bbb R$.
  \item $\ds|Z^{\le j}f|=\Big(\sum_{0\le|a|\le j}|Z^af|^2\Big)^\frac12$ and $Z^{\le1}f=(f,Zf)$.
\end{itemize}

\section{Preliminaries}\label{sect2}

\subsection{Several lemmas}\label{YH-7}

In this subsection, some useful lemmas on the elliptic estimate, the local energy decay estimate,
the Gronwall's Lemma, the Sobolev embedding and the blowup lemma will be listed.

\begin{lemma}[Lemma 3.2 of \cite{Kubo13}]
Let $w\in H^j(\cK)$ and $\ds \cB w|_{\p\cK}=0$ with integer $j\ge2$.
Then for any fixed constant $R>1$ and multi-index $a\in\N_0^2$ with $2\le|a|\le j$, one has
\begin{equation}\label{ellip}
\|\p_x^aw\|_{L^2(\cK)}\ls\|\Delta w\|_{H^{|a|-2}(\cK)}+\|w\|_{H^{|a|-1}(\cK_{R+1})}.
\end{equation}
\end{lemma}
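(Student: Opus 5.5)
The plan is the standard localization argument: split $w$ into a piece supported near the obstacle, which sees the boundary condition, and a piece supported away from it, which lives on all of $\R^2$. Fix $\rho\in C_0^\infty(\R^2)$ with $\rho\equiv1$ on $\{|x|\le R\}$ and $\supp\rho\subset\{|x|<R+1\}$; recall that $\p\cK\subset\{|x|<1/2\}$ and $R>1$. Write $w=\rho w+(1-\rho)w$.

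\emph{Far piece.} The function $(1-\rho)w$ vanishes near $\p\cK$, so it extends by zero to a function in $H^j(\R^2)$. On $\R^2$ the Fourier transform gives $\|\p_x^a v\|_{L^2}\le\|\Delta v\|_{H^{|a|-2}}$ for $|a|\ge2$, because $|\xi^a|\le|\xi|^2\langle\xi\rangle^{|a|-2}$. Apply this to $v=(1-\rho)w$ and use
$$\Delta((1-\rho)w)=(1-\rho)\Delta w-2\nabla\rho\cdot\nabla w-(\Delta\rho)w.$$
The commutator terms are supported in $R\le|x|\le R+1$ and are bounded in $H^{|a|-2}$ by $\|w\|_{H^{|a|-1}(\cK_{R+1})}$. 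The first term is bounded by $\|\Delta w\|_{H^{|a|-2}(\cK)}$.

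\emph{Near piece.} The function $\rho w$ is supported in the bounded smooth domain $\cK_{R+1}$. It satisfies $\cB(\rho w)=0$ on $\p\cK$, since $\rho\equiv1$ there, and it vanishes near the outer circle $\{|x|=R+1\}$. I would use standard elliptic regularity for the Dirichlet or Neumann problem on a bounded smooth domain:
$$\|\rho w\|_{H^{k}}\ls\|\Delta(\rho w)\|_{H^{k-2}}+\|\rho w\|_{L^2},\qquad k\ge2.$$
For the Neumann case the lower-order term is needed to control constants. For the Dirichlet case it is harmless. Mixed boundary conditions cause no trouble: near the outer boundary the function is identically zero, so one can regard it as a Dirichlet problem there, or use interior estimates. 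Expanding $\Delta(\rho w)$ as before gives a bound by $\|\Delta w\|_{H^{|a|-2}(\cK)}+\|w\|_{H^{|a|-1}(\cK_{R+1})}$.

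\emph{Main obstacle.} The only real point is justifying the boundary regularity estimate at the level $H^j$. For $w\in H^j$ this is classical (Agmon--Douglis--Nirenberg, or difference quotients in tangential directions plus solving the equation for the normal second derivatives), so I would simply cite it. Adding the two pieces then gives \eqref{ellip}.
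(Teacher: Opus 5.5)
Your argument is correct. The cutoff $\rho$ equals $1$ near $\p\cK$ (because $\p\cK\subset\{|x|<1/2\}$ and $R>1$), so $\rho w$ inherits $\cB(\rho w)=0$ and bounded-domain Dirichlet/Neumann regularity applies to it. The zero-extended piece $(1-\rho)w$ is handled by the multiplier bound $|\xi^a|\le|\xi|^2\w{\xi}^{|a|-2}$, and all commutator terms are controlled by $\|w\|_{H^{|a|-1}(\cK_{R+1})}$.

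The paper gives no proof of its own; it simply imports the estimate as Lemma 3.2 of \cite{Kubo13}. Your localization is the standard proof of that estimate. It also treats the Dirichlet and Neumann cases uniformly, which matches how the paper uses the lemma for both boundary conditions.
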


\begin{lemma}[Lemma 3.5 of \cite{Kubo13}]\label{lem:loc:decay}
Suppose that the obstacle $\cO$ is star-shaped/convex for the Cauchy-Dirichlet/Neumann problem, respectively.
Denote $\cK=\R^2\setminus\cO$ and let $w$ be the solution of the IBVP
\begin{equation}\label{loc:decay:eqn}
\left\{
\begin{aligned}
&\Box w=H(t,x),\qquad (t,x)\in(0,\infty)\times\cK,\\
&\cB w|_{\p\cK}=0,\\
&(w,\p_tw)(0,x)=(w_0,w_1)(x),\quad x\in\cK,
\end{aligned}
\right.
\end{equation}
where $\supp_x(w_0(x),w_1(x),H(t,x))\subset\{x: |x|\le R_1\}$ for $R_1>1$.
Then for fixed constant $R>1$ and $m\in\N$, there is a positive constant $C=C(m,R,R_1)$ such that
\begin{equation}\label{loc:decay}
\sum_{|a|\le m}\w{t}\|\p^aw\|_{L^2(\cK_R)}
\le C(\|w_0\|_{H^{m}(\cK)}+\|w_1\|_{H^{m-1}(\cK)}+\ln(2+t)\sum_{|a|\le m-1}\sup_{0\le s\le t}\w{s}\|\p^aH(s,\cdot)\|_{L^2(\cK)}).
\end{equation}
\end{lemma}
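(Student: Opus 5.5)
The plan is to reduce \eqref{loc:decay} to a local energy decay estimate for the homogeneous problem, and then handle the forcing term $H$ by Duhamel's principle. The logarithm in \eqref{loc:decay} will come entirely from the time convolution of two $\w{\cdot}^{-1}$ weights.

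\textbf{Step 1 (homogeneous decay).} Let $S(t)$ be the solution operator of \eqref{loc:decay:eqn} with $H\equiv0$. I first want, for data supported in $\{|x|\le R_1\}$ and satisfying the compatibility conditions,
\begin{equation*}
\sum_{|a|\le m}\|\p^a S(t)(w_0,w_1)\|_{L^2(\cK_R)}\le C\w{t}^{-1}\big(\|w_0\|_{H^m(\cK)}+\|w_1\|_{H^{m-1}(\cK)}\big).
\end{equation*}
I would prove this by Vainberg's method, through the cut-off resolvent $\chi_R(-\Delta_{\cB}-\lambda^2)^{-1}\chi_{R_1}$.
\begin{itemize}
\item \emph{High frequencies.} Since $\cO$ is star-shaped (Dirichlet) or convex (Neumann), the obstacle is non-trapping. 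Hence the resolvent continues analytically to a logarithmic neighbourhood below the real axis, with polynomial bounds in $|\lambda|$.
\item \emph{Low frequencies.} In 2-D the resolvent has a logarithmic expansion at $\lambda=0$. For Dirichlet the leading term is of order $(\ln\lambda)^{-1}$. For Neumann it is regular plus $\lambda^2\ln\lambda$ terms.
\item \emph{Inversion.} Taking the inverse Fourier--Laplace transform and deforming the contour, the low-frequency singularity gives time decay of at least $\w{t}^{-1}$ (in fact better for Dirichlet), and the high-frequency part decays faster than any power.
\end{itemize}
An alternative route is to glue the free 2-D wave propagator, which has explicit $t^{-1}$ local decay for compactly supported data, with a compact boundary correction. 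The decay then follows from a Neumann-series argument in the Laplace variable. Higher time derivatives are handled by applying the estimate to $\p_t^kS(t)$, whose data remain compactly supported.

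\textbf{Step 2 (Duhamel).} Write
\begin{equation*}
w(t)=S(t)(w_0,w_1)+\int_0^tS(t-s)(0,H(s))\,ds .
\end{equation*}
Since $\supp_x H(s)\subset\{|x|\le R_1\}$, Step 1 applies to each slice, giving
\begin{equation*}
\|\p^a w(t)\|_{L^2(\cK_R)}\lesssim\w{t}^{-1}\|(w_0,w_1)\|+\int_0^t\w{t-s}^{-1}\|H(s)\|_{H^{m-1}}\,ds .
\end{equation*}
I then bound $\|H(s)\|_{H^{m-1}}\le\w{s}^{-1}\sup_{s'\le t}\w{s'}\|\p^{\le m-1}H(s')\|$ and use
\begin{equation*}
\int_0^t\w{t-s}^{-1}\w{s}^{-1}ds\lesssim\w{t}^{-1}\ln(2+t),
\end{equation*}
which yields exactly \eqref{loc:decay}. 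Here the $\ln$ factor is sharp for the convolution.

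\textbf{Step 3 (spatial derivatives).} Once the bounds for $\p_t^kw$ are known, I recover mixed and spatial derivatives with the elliptic estimate \eqref{ellip}, localized with a cutoff in $\cK_{R+1}$. The equation gives $\Delta w=\p_t^2w-H$, and this is iterated over $|a|$. The commutator terms from the cutoff involve only lower-order norms on a slightly larger ball, which are already controlled.

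The main obstacle is Step 1: establishing the uniform $\w{t}^{-1}$ local decay in two dimensions, where the zero-frequency behaviour of the resolvent is logarithmic. The Neumann case is the delicate one, since there is no gain from the $(\ln\lambda)^{-1}$ factor, so one must check that the $\lambda^2\ln\lambda$ singularity still yields the $t^{-1}$ rate.
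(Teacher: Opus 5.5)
The paper does not prove this lemma; it quotes Lemma 3.5 of Kubo. The argument behind that estimate is the one you outline: homogeneous local decay at rate $\w{t}^{-1}$ for compactly supported data on a non-trapping exterior, then Duhamel, with $\int_0^t\w{t-s}^{-1}\w{s}^{-1}ds\ls\w{t}^{-1}\ln(2+t)$ producing the logarithm. However, two of your steps are wrong as written.

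\textbf{Step 1: the Neumann low-frequency picture.} Your description is incorrect, and it misidentifies the difficulty. Constants satisfy the Neumann condition, so the cut-off Neumann resolvent has, exactly like the free 2-D resolvent, a leading $\ln\lambda$ singularity with a rank-one coefficient. It is not ``regular plus $\lambda^2\ln\lambda$''. If it were, $w$ would decay like $t^{-3}$ on $\cK_R$. That is false: for data $(0,g)$ with $\int g\,dx\neq0$, $w(t,\cdot)$ is of exact order $t^{-1}\int g\,dx$ on $\cK_R$, just as in free space. It is this $\ln\lambda$ term that gives, sharply, the rate $\w{t}^{-1}$ for the $|a|=0$ term $\|w\|_{L^2(\cK_R)}$ in \eqref{loc:decay}. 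That is why the $\ln(2+t)$ loss cannot be removed in the Neumann case. For Dirichlet, the $(\ln\lambda)^{-1}$ term gives $\w{t}^{-1}\ln^{-2}(2+t)$, and the convolution then loses nothing. The rate you state in Step 1 is correct and is a known result for non-trapping obstacles, so you may cite it. You cannot derive it from the expansion you wrote.

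\textbf{Step 2: compatibility of the Duhamel data.} You apply the level-$m$ homogeneous bound to the data $(0,H(s))$, which in general violate the compatibility conditions. For Dirichlet with $m\ge2$ you would need $H(s)|_{\p\cK}=0$, which is not assumed; for a smooth solution one only has $H|_{\p\cK}=-\Delta w|_{\p\cK}$. So $S(t-s)(0,H(s))$ need not even lie in $H^m$, and the same problem arises for Neumann when $m\ge3$.

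The repair is to run Duhamel only at the energy level $H^1\times L^2$, where no compatibility is needed, for $\p_t^kw$ with $k\le m-1$. The forcing is then $\p_t^kH$. The data $(\p_t^kw(0),\p_t^{k+1}w(0))$ are supported in $\{|x|\le R_1\}$ and are automatically compatible because $w$ is a smooth solution. They are bounded by $\|w_0\|_{H^m}+\|w_1\|_{H^{m-1}}+\sum_{|a|\le m-2}\|\p^aH(0)\|_{L^2(\cK)}$. This controls $\w{t}\|\p_t^kw\|_{H^1(\cK_{R'})}$ for every $R'$. Your Step 3 (the elliptic estimate with $\Delta w=\p_t^2w-H$ on nested balls) then recovers all remaining spatial derivatives and closes the argument.
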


\begin{lemma}[Lemma 2.4 of \cite{HouYinYuan26}]\label{lem:Gronwall}
For any non-negative constants $A,B_1,C,D$ with $B_1>0$ and function $f(t)\ge0$ satisfying
\begin{equation*}
f(t)\le A+B_1\int_0^t\frac{f(s)ds}{(1+s)^{1/2}}+C(1+t)^D,
\end{equation*}
then
\begin{equation*}
f(t)\le e^{2B_1(1+t)^{1/2}}[A+C(1+t)^D].
\end{equation*}
\end{lemma}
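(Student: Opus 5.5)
The statement is a Gronwall-type inequality with the weight $(1+s)^{-1/2}$ and a polynomially growing forcing term $C(1+t)^D$. Since $C(1+t)^D$ is nondecreasing in $t$, the plan is to reduce it to a constant forcing on each finite interval and then run the classical integrating-factor argument.

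First, fix $T>0$. For $t\in[0,T]$ we have
\[
f(t)\le A_T+B_1\int_0^t\frac{f(s)\,ds}{(1+s)^{1/2}},\qquad A_T:=A+C(1+T)^D.
\]
This holds because $D\ge0$ and $C\ge0$ make $C(1+t)^D\le C(1+T)^D$ on $[0,T]$.

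Next, set $g(t)=\int_0^t f(s)(1+s)^{-1/2}ds$. I will assume $f$ is locally integrable (measurable and bounded on compacts suffices), so that $g$ is absolutely continuous. Almost everywhere on $[0,T]$,
\[
g'(t)=f(t)(1+t)^{-1/2}\le (1+t)^{-1/2}\big(A_T+B_1g(t)\big).
\]
Multiply by the integrating factor $\exp\big(-2B_1(1+t)^{1/2}\big)$, whose derivative is $-B_1(1+t)^{-1/2}\exp(\cdot)$. This shows that
\[
\frac{d}{dt}\Big(g(t)e^{-2B_1(1+t)^{1/2}}\Big)\le A_T(1+t)^{-1/2}e^{-2B_1(1+t)^{1/2}}.
\]
Integrate from $0$ to $t$, using $g(0)=0$ and $\int_0^t(1+s)^{-1/2}e^{-2B_1(1+s)^{1/2}}ds=\frac{1}{B_1}\big(e^{-2B_1}-e^{-2B_1(1+t)^{1/2}}\big)$. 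This gives
\[
B_1g(t)\le A_T\big(e^{2B_1((1+t)^{1/2}-1)}-1\big).
\]

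Plugging back in yields
\[
f(t)\le A_T+B_1g(t)\le A_T\,e^{2B_1((1+t)^{1/2}-1)}\le e^{2B_1(1+t)^{1/2}}A_T.
\]
Finally, take $t=T$, where $A_T=A+C(1+T)^D$; since $T$ is arbitrary, the claim follows.

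There is no real obstacle here. The only points needing care are the regularity of $f$ (needed to differentiate $g$) and the monotonicity trick, which requires $D\ge0$ so that $(1+t)^D$ can be replaced by its value at the endpoint. If $f$ is not assumed integrable, the right-hand side of the hypothesis is interpreted as finite, which implicitly gives integrability.
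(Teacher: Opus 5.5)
Your proof is correct. Freezing the nondecreasing term $C(1+t)^D$ at its value at $T$ and then running the integrating-factor argument with $e^{-2B_1(1+t)^{1/2}}$ is the standard Gronwall argument, and it even yields the slightly sharper factor $e^{2B_1((1+t)^{1/2}-1)}$. The paper gives no proof of its own and simply quotes the lemma as Lemma 2.4 of \cite{HouYinYuan26}, so there is no different route to compare against.
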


\begin{lemma}[Lemma 3.1 of \cite{Kubo13}]\label{YH-1}
Given any function $f(x)\in C_0^2(\overline{\cK})$, one has that for all $x\in\cK$,
\begin{equation}\label{Sobo:ineq}
\w{x}^{1/2}|f(x)|\ls\sum_{|a|\le2}\|Z^af\|_{L^2(\cK)}.
\end{equation}
\end{lemma}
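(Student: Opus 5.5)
The plan is to prove \eqref{Sobo:ineq} by separating a bounded region near the obstacle from the exterior region, and treating the exterior region in polar coordinates. The angular field $\Omega=\p_\theta$ gives one dimension's worth of Sobolev control at each radius, and the radial direction is handled by a one-dimensional Sobolev embedding on unit-length intervals. The key point is that the area element $r\,dr\,d\theta$ carries the factor $r$, and this factor produces the weight $\w{x}^{1/2}$.

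\emph{Region near the obstacle.} Fix $R\ge 2$ and consider $x\in\cK_{R}$, where $\w{x}\ls 1$. Since $\p\cK$ is smooth and $\cK_{R+1}$ is bounded, there is a bounded extension operator from $H^2(\cK_{R+1})$. Combining it with the standard embedding $H^2\hookrightarrow L^\infty$ in two dimensions gives $|f(x)|\ls\|f\|_{H^2(\cK_{R+1})}\ls\sum_{|a|\le2}\|\p_x^af\|_{L^2(\cK)}$. Because $\p_1,\p_2\in Z$, the right-hand side is bounded by $\sum_{|a|\le 2}\|Z^af\|_{L^2(\cK)}$. No boundary condition on $f$ is needed here; only membership in $C_0^2(\overline{\cK})$ is used.

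\emph{Exterior region $|x|=r\ge R$.} Here $\cK$ contains the whole annulus $\{r-1\le|y|\le r+1\}$.
\begin{enumerate}[(i)]
\item For fixed $\rho$, the one-dimensional Sobolev inequality on the circle gives
\[
|g(\rho\omega)|^2\ls\int_0^{2\pi}\big(|g|^2+|\p_\theta g|^2\big)(\rho,\theta')\,d\theta' .
\]
Apply this with $g=f$ and with $\Omega=\p_\theta$.
\item For fixed $\theta$, the one-dimensional Sobolev inequality on the interval $[r,r+1]$ gives
\[
|h(r)|^2\ls\int_r^{r+1}\big(|h|^2+|\p_\rho h|^2\big)\,d\rho .
\]
\item Combining (i) and (ii) yields
\[
|f(r\omega)|^2\ls\sum_{j\le1,\,k\le1}\int_r^{r+1}\!\!\int_0^{2\pi}|\p_\rho^j\Omega^kf|^2\,d\theta\,d\rho .
\]
\item On $[r,r+1]$ we have $\rho\ge r$, so the right-hand side is bounded by $r^{-1}\int\!\!\int|\p_\rho^j\Omega^kf|^2\rho\,d\rho\,d\theta$. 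This is $r^{-1}\|\p_\rho^j\Omega^kf\|_{L^2(\cK)}^2$.
\item Finally use $\p_\rho=\omega\cdot\p_x$, the fact that $[\p_\rho,\Omega]=0$, and that $\omega$ is bounded. These give $|\p_\rho\Omega f|\le|\Omega\p_xf|$, and writing $\Omega\p_i=\p_i\Omega+[\Omega,\p_i]$ with $[\Omega,\p_i]$ a constant-coefficient combination of $\p_x$ expresses everything through $|Z^af|$ with $|a|\le 2$.
\end{enumerate}
Multiplying by $r\sim\w{x}$ and taking square roots gives $\w{x}^{1/2}|f(x)|\ls\sum_{|a|\le2}\|Z^af\|_{L^2(\cK)}$. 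Compact support of $f$ is only used to make the integrals finite.

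The step needing most care is the exterior region, specifically ensuring that the annulus $[r,r+1]\times\SS^1$ lies in $\cK$ and that the mixed derivative $\p_\rho\Omega f$ is controlled by $Z$-derivatives of order at most $2$. Both hold by the choice $R\ge 2$, the assumption $\p\cK\subset\{|x|<1/2\}$, and the commutator identity above. Everything else is routine.
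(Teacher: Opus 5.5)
Your argument is correct and is essentially the standard proof behind the cited Lemma 3.1 of Kubo (the paper gives no proof of its own): Sobolev embedding on the bounded smooth piece $\cK_{R+1}$, plus the polar-coordinate $H^1(\SS^1)$/$H^1([r,r+1])$ argument in the exterior, where the area element $\rho\,d\rho\,d\theta$ supplies the weight $\w{x}^{1/2}$. The only slip is in step (v): $|\p_\rho\Omega f|\le|\Omega\p_x f|$ is false as stated, because $\Omega$ also differentiates $\omega$ (e.g.\ $f=x_1$ near a point gives $\Omega\p_x f=0$ but $\p_\rho\Omega f=-\sin\theta$); the fix is to write $\p_\rho\Omega f=\omega\cdot\p_x(\Omega f)$, so $|\p_\rho\Omega f|\le|\p_x\Omega f|$, which is already some $Z^af$ with $|a|=2$.
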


\begin{lemma}[Lemma 1.3.2 of \cite{Hormander97book}]\label{blowup-lem}
Let $y(t)\in C^1[0,T]$ be a solution of 
$\frac{dy}{dt}=a_0(t)y^2+a_1(t)y+a_2(t)$,
where $a_j(t)\in C[0,T]$ $(j=0,1,2)$ and $a_0(t)\ge0$ for all $t\in[0,T]$.
Define 
\begin{equation*}
K=\Big(\int_0^T|a_2(t)|dt\Big)\exp\Big(\int_0^T|a_1(t)|dt\Big).
\end{equation*}
If $y(0)>K$, then it holds that
\begin{equation*}
\Big(\int_0^Ta_0(t)dt\Big)\exp\Big(-\int_0^T|a_1(t)|dt\Big)<(y(0)-K)^{-1}.
\end{equation*}
\end{lemma}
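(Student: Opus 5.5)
The plan is to prove this by a comparison argument: remove the linear term with an integrating factor, then compare with a pure Riccati inequality. No earlier result of the paper is needed; only $a_0\ge0$, $y(0)>K$ and the $C^1$ regularity of $y$ on $[0,T]$ are used. Throughout write $A_1(t)=\int_0^ta_1(s)ds$, and note that $|A_1(t)|\le\int_0^T|a_1|ds$ for $t\in[0,T]$.

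\emph{Step 1 (remove the linear term).} Set $v(t)=y(t)e^{-A_1(t)}$. A direct computation gives
\begin{equation*}
v'=a_0e^{A_1}v^2+a_2e^{-A_1},
\end{equation*}
and hence, after integrating,
\begin{equation*}
v(t)=y(0)+\int_0^ta_2e^{-A_1}ds+\int_0^ta_0e^{A_1}v^2ds.
\end{equation*}
Since $e^{-A_1}\le\exp(\int_0^T|a_1|)$, the middle integral is bounded below by $-K$. Therefore
\begin{equation*}
v(t)\ge g(t):=y(0)-K+\int_0^ta_0(s)e^{A_1(s)}v(s)^2ds .
\end{equation*}

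\emph{Step 2 (positivity and a Riccati inequality for $g$).} Because $a_0\ge0$, the integrand in $g$ is nonnegative. So $g(t)\ge y(0)-K>0$ on $[0,T]$, and consequently $v\ge g>0$. Moreover $g$ is $C^1$ with
\begin{equation*}
g'=a_0e^{A_1}v^2\ge a_0e^{A_1}g^2\ge a_0(t)\exp\Big(-\int_0^T|a_1|ds\Big)g^2 .
\end{equation*}

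\emph{Step 3 (integrate).} Since $g>0$, the inequality reads $-(1/g)'\ge a_0\exp(-\int_0^T|a_1|)$. Integrating over $[0,T]$ gives
\begin{equation*}
\frac1{y(0)-K}-\frac1{g(T)}\ge\Big(\int_0^Ta_0dt\Big)\exp\Big(-\int_0^T|a_1|dt\Big).
\end{equation*}
Since $1/g(T)>0$, the inequality becomes strict after dropping that term, which is the claim.

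The only delicate point is ensuring positivity before dividing by $g$ (or $v$). Defining $g$ with the $v^2$ integral, rather than comparing $v$ directly with an ODE solution, makes positivity automatic and avoids any continuation argument.
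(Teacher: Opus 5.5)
Your proof is correct: the integrating factor $e^{-A_1}$ removes the linear term, and the bound $\int_0^t a_2e^{-A_1}\,ds\ge -K$ together with $a_0\ge0$ gives $v\ge g\ge y(0)-K>0$; integrating $-(1/g)'\ge a_0\exp(-\int_0^T|a_1|\,ds)$ then yields the strict inequality because $1/g(T)>0$. The paper gives no proof of its own and only cites H\"ormander's Lemma 1.3.2, and your argument is essentially the standard one behind it: reduce to $a_1=0$ by the integrating factor, then compare with $z'\ge a_0z^2$, $z(0)=y(0)-K$.
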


\subsection{Some crucial pointwise estimates of Cauchy problem}\label{YH-8}

\begin{lemma}
Let $w$ be the solution of the Cauchy problem
\begin{equation*}
\left\{
\begin{aligned}
&\Box w=F,\qquad\qquad\qquad\qquad\quad (t,x)\in[0,\infty)\times\R^2,\\
&(w,\p_tw)(0,x)=(w_0,w_1),\qquad x\in\R^2.
\end{aligned}
\right.
\end{equation*}
Then for $\mu,\nu\in(0,1/2)$, one has
\begin{equation}\label{dpw:ivp}
\w{x}^{1/2}\w{t-|x|}^{1+\mu}|\p w|\ls\cA_{4,1}[w_0,w_1]+\sum_{|a|\le1}\sup_{(s,y)\in[0,t]\times\R^2}\w{y}^{1/2}\cW_{1+\mu+\nu,1}(s,y)|Z^aF(s,y)|,
\end{equation}
where $\ds\cA_{\kappa,s}[f,g]:=\sum_{\tilde\Gamma\in\{\p_1,\p_2,\Omega\}}
(\sum_{|a|\le s+1}\|\w{z}^\kappa\tilde\Gamma^af(z)\|_{L^\infty}+\sum_{|a|\le s}\|\w{z}^\kappa\tilde\Gamma^ag(z)\|_{L^\infty})$
and $\ds\cW_{\mu,\nu}(t,x)=\w{t+|x|}^\mu(\min\{\w{x},\w{t-|x|}\})^\nu$.
In addition, for $F\equiv0$ one has
\begin{equation}\label{pw:ivp}
\w{t+|x|}^{1/2}\w{t-|x|}^{1/2}|w|\ls\cA_{3,0}[w_0,w_1].
\end{equation}
\end{lemma}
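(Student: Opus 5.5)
The two estimates are proved separately. For \eqref{dpw:ivp} I would use the explicit Riemann-function representation of the 2-D wave equation. For \eqref{pw:ivp} I would use the standard decay of the homogeneous solution. The data are measured in the weighted norm $\cA$, which involves only $\p_1,\p_2,\Omega$ and no boost or scaling fields. Lorentz fields therefore must not be used on the data side, only on the kernel.

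For \eqref{pw:ivp}, write $w=\p_t(W*w_0)+W*w_1$, where $W(t,x)=\frac{1}{2\pi}(t^2-|x|^2)_+^{-1/2}$. Substituting $y=x+t z$, $|z|<1$, gives
\[
w(t,x)=\frac{t}{2\pi}\int_{|z|<1}\frac{w_1(x+tz)}{\sqrt{1-|z|^2}}\,dz+\p_t(\cdots).
\]
The integrand is bounded by $\w{x+tz}^{-3}$ times the norm. One then integrates in polar coordinates centred at $x$: the level set $|x+tz|=\rho$ meets the disk in an arc, and one estimates that arc together with the singular weight $(t^2-|x-y|^2)^{-1/2}$. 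This is the classical computation, and it gives
\[
t\int_{|y-x|<t}\frac{\w{y}^{-3}\,dy}{t\sqrt{t^2-|x-y|^2}}\ls \w{t+|x|}^{-1/2}\w{t-|x|}^{-1/2}.
\]
The $\p_t$ term is handled the same way after writing $\p_t$ of the spherical mean as $\frac1t(\cdot)+\int z\cdot\nabla w_0$. This is why one derivative and weight $\kappa=3$ are needed. Angular derivatives are not needed here.

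For \eqref{dpw:ivp}, the homogeneous part is treated as above with one more derivative. One obtains $|\p w_{hom}|\ls\w{t+r}^{-1/2}\w{t-r}^{-3/2}\cA_{4,1}$; the extra decay in $t-r$ comes from integrating by parts once more, using the compact-like decay $\w{z}^{-4}$ of the data. Since $\w{x}^{1/2}\le\w{t+r}^{1/2}$ and $\mu<1/2$, this yields the claim. The angular derivatives enter because $\p_r$ is converted into $\p_t$ plus a good term, and $\frac1r\p_\theta=\frac1r\Omega$.

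For the inhomogeneous part, use Duhamel's formula, $w_{inh}=\int_0^t W(t-s)*F(s)\,ds$. The key step is to bound $\p w_{inh}$ by derivatives of $F$ only through $Z\in\{\p,\Omega\}$. I would follow the approach of Kubo's $L^\infty$-$L^\infty$ estimates, as in the 2-D papers, and use the decomposition $\p=\omega\,\p_r$-type identities. Concretely, one writes $\p_x W*F$ and moves the derivative onto $F$ (giving $\p F$), but only in the region $|t-r|\gtrsim t$. In the near-cone region one uses the identity
\[
\p_t+\p_r=\frac{1}{t+r}\bigl(S+\text{boost terms}\bigr)
\]
applied to the kernel rather than to $F$, so that only $\Omega$ and $\p$ fall on $F$.

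The remaining task is a weighted integral estimate. One needs
\[
\int_0^t\int_{|y-x|<t-s}\frac{\w{y}^{-1/2}\cW_{1+\mu+\nu,1}(s,y)^{-1}}{\sqrt{(t-s)^2-|x-y|^2}}\,dy\,ds \ls \w{x}^{-1/2}\w{t-|x|}^{-1-\mu},
\]
which I would prove by passing to coordinates $\lambda=s+|y|$, $\rho=s-|y|$ and using the standard lemma of Kubo--Kubota type for 2-D kernels. The extra $\nu>0$ absorbs the logarithm coming from integrating in $\lambda$, and $\mu<1/2$ keeps the $\rho$-integral convergent.

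I expect the main obstacle to be this last kernel estimate near the light cone, with its interplay of the weights $\min\{\w{y},\w{s-|y|}\}$. I would split into the cases $|y|\le s/2$ and $|y|\ge s/2$, and estimate each region separately.
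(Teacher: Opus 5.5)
The paper does not prove this lemma; it imports both estimates from (3.5) and (3.7) of \cite{HouYinYuan24}. Your treatment of \eqref{pw:ivp}, and of the free-wave part of \eqref{dpw:ivp}, is fine in outline. The inhomogeneous part of \eqref{dpw:ivp}, however, has a genuine gap: the weighted integral estimate you reduce it to is false. Take $x=0$, $t$ large, and keep only $s\in[0,t/2]$, $|y|\le 1$. There $(t-s)^2-|y|^2\le t^2$ and $\w{y}^{-1/2}\cW_{1+\mu+\nu,1}(s,y)^{-1}\gtrsim\w{s}^{-1-\mu-\nu}$. Hence your left-hand side is $\gtrsim t^{-1}\int_0^{t/2}\w{s}^{-1-\mu-\nu}ds\gtrsim t^{-1}$, whereas the right-hand side is $\w{t}^{-1-\mu}$. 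It fails in the exterior as well: at $|x|=3t$ the weight is $\simeq t^{-5/2-\mu-\nu}$ on the whole backward cone, whose kernel mass is $\pi t^2$, so you get $t^{-1/2-\mu-\nu}$ against the target $t^{-3/2-\mu}$.

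This is structural. Any bound of $\p w$ by the undifferentiated kernel applied to $|\p F|,|\Omega F|$ is limited by the $t^{-1}$ decay of the 2-D fundamental solution at the spatial origin. It therefore cannot produce $\w{t-|x|}^{-1-\mu}$ in the region $|t-|x||\gtrsim t$, which is exactly where you propose to move the derivative onto $F$. The true estimate rests on cancellation. Take the admissible source $F=\w{s}^{-1-\mu-\nu}\psi(y)$ with $\psi\in C_0^\infty$. Moving $\p_t$ through Duhamel gives $\p_t w=W(t)*F(0)+W*\p_tF$. At $x=0$ each piece is $\simeq\pm(2\pi t)^{-1}\int\psi$, and only their sum is $O(t^{-1-\mu-\nu})$.

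What is missing is the mechanism that gains $\w{t-|x|}^{-1}$ without Lorentz fields on $F$. The derivative has to stay on the kernel on the part of the backward cone where $t-s-|x-y|$ is not small; there $|\p W(t-s,x-y)|\lesssim (t-s)\big((t-s)^2-|x-y|^2\big)^{-3/2}$ supplies the extra decay. It may be transferred to $F$ (as $\p F$ and $|y|^{-1}\Omega F$) only in a neighbourhood of the backward characteristic cone $|x-y|=t-s$. There the angular regularity coming from $\Omega F$ must be exploited, as the exterior example above shows.

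Your near-cone step does not provide this. An identity like $\p_t+\p_r=(t+r)^{-1}(S+\omega\cdot L)$ cannot simply be applied to the kernel. Write $S=(t-s)\p_t+(x-y)\cdot\nabla_x+(s\p_t+y\cdot\nabla_x)$. Only the first two terms are absorbed by the homogeneity of $W(t-s,x-y)$, and likewise only part of each boost is absorbed by Lorentz invariance. The remainder equals $-(s\p_s+y\cdot\nabla_y)W(t-s,x-y)$ (respectively $-(s\p_{y_i}+y_i\p_s)W$). Integrating by parts in $(s,y)$ then puts $S$ and the boosts on $F$, which the right-hand side, containing only $Z\in\{\p_t,\p_1,\p_2,\Omega\}$, does not control. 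Moreover, $\p_t+\p_r$ is the good derivative: it is $\p_t-\p_r$ that needs the factor $\w{t-|x|}^{-1}$. And in the zone $|t-|x||\lesssim 1$ no such factor is required at all. So that step targets the wrong derivative in the wrong region.
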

\begin{proof}
See (3.7), (3.5) of \cite{HouYinYuan24} for \eqref{dpw:ivp} and \eqref{pw:ivp}, respectively.
\end{proof}

\begin{lemma}[Lemmas 3.8 and 3.9 of \cite{HouYinYuan24}]\label{lem:pw:compact}
Let $v$ be the solution of the Cauchy problem
\begin{equation*}
\left\{
\begin{aligned}
&\Box w=H(t,x),\qquad\qquad(t,x)\in[0,\infty)\times\R^2,\\
&(w,\p_tw)(0,x)=(0,0),\quad x\in\R^2,
\end{aligned}
\right.
\end{equation*}
provided that $\supp_x H(t,x)\subset\{x: |x|\le R\}$.
Then it holds that for $\mu\in(0,1/2]$,
\begin{equation}\label{pw:compact}
\w{t+|x|}^{1/2}\w{t-|x|}^{\mu}|w|
\ls\ln^2(2+t+|x|)\sup_{(s,y)\in[0,t]\times\R^2}\w{s}^{1/2+\mu}|(1-\Delta)H(s,y)|
\end{equation}
and
\begin{equation}\label{dpw:compact}
\w{x}^{1/2}\w{t-|x|}|\p w|
\ls\sup_{(s,y)\in[0,t]\times\R^2}\w{s}|(1-\Delta)^3H(s,y)|.
\end{equation}
\end{lemma}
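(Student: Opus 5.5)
The plan is to work directly from the Duhamel representation of the 2-D Cauchy problem with zero data,
\[
w(t,x)=\frac1{2\pi}\int_0^t\int_{|x-y|<t-s}\frac{H(s,y)}{\sqrt{(t-s)^2-|x-y|^2}}\,dy\,ds ,
\]
and to use that $\supp_y H(s,\cdot)\subset\{|y|\le R\}$.

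\textbf{Estimate \eqref{pw:compact}.} Write $y$ in polar coordinates centred at $x$, with $\rho=|x-y|$. The circle of radius $\rho$ meets the ball $\{|y|\le R\}$ in an arc of length at most $C\min\{\rho,R\}$, and only for $\big||x|-\rho\big|\le R$. Since $(t-s)^2-\rho^2=(t-s-\rho)(t-s+\rho)$, this gives, with $M=\sup_{(s,y)}\w{s}^{1/2+\mu}|H(s,y)|$,
\[
|w(t,x)|\ls M\int_0^t\w{s}^{-1/2-\mu}\int_{||x|-\rho|\le R,\ \rho<t-s}\frac{\min\{\rho,R\}\,d\rho}{(t-s-\rho)^{1/2}(t-s+\rho)^{1/2}}\,ds .
\]
\begin{itemize}
\item Do the $\rho$-integral first. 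It costs at most $\w{t-s+|x|}^{-1/2}$ times $\min\{1,\w{x}\}$ times a locally integrable singularity $(t-s-|x|)_+^{-1/2}$, truncated at scale $R$.
\item Then do the $s$-integral, splitting into $s\le t/2$ and $s\ge t/2$.
\item For $s\le t/2$, the factor $\w{t-s+|x|}^{-1/2}\sim\w{t+|x|}^{-1/2}$. The remaining integral $\int\w{s}^{-1/2-\mu}\w{t-s-|x|}^{-1/2}\,ds$ is concentrated near $s\approx t-|x|$ and is bounded by $\w{t-|x|}^{-\mu}$ up to a logarithm.
\item For $s\ge t/2$, the weight $\w{s}^{-1/2-\mu}\sim\w{t}^{-1/2-\mu}$ directly yields the claim, again up to logarithms.
\end{itemize}
The at most two logarithmic losses (one from the endpoint $\mu=1/2$, one from the near-light-cone region) produce $\ln^2(2+t+|x|)$. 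In this case the factor $(1-\Delta)$ is harmless: we may simply bound $|H|\ls|(1-\Delta)H|$ via $H=(1-\Delta)^{-1}(1-\Delta)H$, whose kernel decays exponentially, so the support condition holds up to an exponentially small tail that is treated identically.

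\textbf{Estimate \eqref{dpw:compact}: reduction.} Set $h=(1-\Delta)^3H$ and write $w=(1-\Delta)^{-3}v$, where $v$ solves $\Box v=h$. Then
\[
\p_x w(t)=\int_0^t\p_x\frac{\sin((t-s)|D|)}{|D|}(1-\Delta)^{-3}h(s)\,ds,\qquad
\p_t w(t)=\int_0^t\cos((t-s)|D|)(1-\Delta)^{-3}h(s)\,ds .
\]
So it suffices to bound the kernels $K_1=\p_x\frac{\sin(\tau|D|)}{|D|}(1-\Delta)^{-3}$ and $K_0=\cos(\tau|D|)(1-\Delta)^{-3}$ pointwise. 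The claim is
\[
|K_j(\tau,z)|\ls\w{z}^{-1/2}\w{\tau-|z|}^{-3/2}\quad (j=0,1),
\]
with rapid decay for $|z|>\tau+1$. This follows because the smoothing $(1-\Delta)^{-3}$ converts the $(\tau^2-|z|^2)^{-3/2}$ singularity of the differentiated fundamental solution into an integrable, smoothed profile. One proves it either by stationary phase on the Fourier side, where the symbol $\w{\xi}^{-6}$ supplies enough integrability, or by convolving the explicit kernel with the Bessel potential. Convolving $K_j$ against $h(s,\cdot)$, which is supported essentially in $|y|\le R$ up to exponentially small tails, preserves these bounds with $z$ replaced by $x$.

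\textbf{Estimate \eqref{dpw:compact}: the time integral.} This reduces to
\[
\int_0^t\w{s}^{-1}\w{x}^{-1/2}\w{t-s-|x|}^{-3/2}\,ds\ls\w{x}^{-1/2}\w{t-|x|}^{-1}.
\]
To see this, split at $s=(t-|x|)/2$. On one side $\w{s}^{-1}\sim\w{t-|x|}^{-1}$ and $\w{t-s-|x|}^{-3/2}$ is integrable. On the other side $\w{t-s-|x|}^{-3/2}\ls\w{t-|x|}^{-3/2}$ and $\int\w{s}^{-1}\,ds$ gives only a logarithm, which the extra $\w{t-|x|}^{-1/2}$ absorbs. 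When $|x|>t$, the rapid decay of the kernel outside the cone gives the bound trivially.

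The main obstacle is the pointwise kernel bound for $K_0$ and $K_1$ with the sharp exponent $3/2$ in $\w{\tau-|z|}$ and no loss near $z=0$. This is exactly why three powers of $(1-\Delta)$ are needed. I would first try the Fourier route: write $\cos(\tau|\xi|)$ in polar form via Bessel functions $J_0$ and $J_1$, and integrate by parts in $|\xi|$ three times using the $\w{\xi}^{-6}$ decay.
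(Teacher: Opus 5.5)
The paper gives no argument for this lemma. Both \eqref{pw:compact} and \eqref{dpw:compact} are imported verbatim from Lemmas 3.8--3.9 of \cite{HouYinYuan24}. So your proposal is a self-contained replacement for a citation rather than an alternative to an in-paper proof, and as such it is sound.

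For \eqref{pw:compact}, the Duhamel computation works as you describe. The $\rho$-integral over an interval of length at most $2R$ gives $\w{t-s+|x|}^{-1/2}\w{t-s-|x|}^{-1/2}$, which is bounded with no residual singularity. The $s$-split then costs only one logarithm, so your count is generous.

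For \eqref{dpw:compact}, the reduction to the kernels of $\p\,\Box^{-1}(1-\Delta)^{-3}$ is correct, and so is the $s$-integral split at $s=(t-|x|)/2$. The kernel bound you need is true; inside the cone one even has $|K_j(\tau,z)|\ls\w{\tau}^{-1/2}\w{\tau-|z|}^{-3/2}$.

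What your route buys over the black-box citation is transparency about the $(1-\Delta)$ factors. The first estimate really only uses $\sup_y|H|$. The second only uses that $\p G$ is bounded and exponentially decaying, where $G$ is the Bessel kernel of $(1-\Delta)^{-3}$. This already holds for $(1-\Delta)^{-2}$, so your remark that this is ``exactly why three powers are needed'' overstates things.

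Three points to tighten.

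\emph{The reduction for \eqref{pw:compact}.} The pointwise bound $|H|\ls|(1-\Delta)H|$ is false; evaluate it at a zero of $(1-\Delta)H$. What is true is $\sup_y|H(s,y)|\le\sup_y|(1-\Delta)H(s,y)|$, because the kernel of $(1-\Delta)^{-1}$ on $\R^2$ is positive with integral $1$. This suffices, since the weight depends only on $s$.

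\emph{Tails.} There are no ``exponentially small tails''. Both $H$ and $h=(1-\Delta)^3H$ are supported exactly in $\{|y|\le R\}$. Moreover $K_j(\tau)*h=\p E(\tau)*H$ vanishes for $|x|>\tau+R$ by finite propagation speed, so the behaviour of the kernel outside the cone never enters.

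\emph{The kernel bound.} The physical-space route is much shorter than three radial integrations by parts against $J_0,J_1$, which would also need a separate low-frequency localization.
Let $E(\tau,z)=\frac1{2\pi}(\tau^2-|z|^2)_+^{-1/2}$. Then $K_1(\tau)=E(\tau)*\p_xG$. By the Euler relation $\tau\p_\tau E=-E-z\cdot\nabla_zE$, one also has $K_0(\tau)=-\tau^{-1}\big(\sum_i(z_iE)*\p_iG-E*G\big)$.
Near the cone, pair the locally integrable $E$ (using $|z_iE|\le\tau E$) against the bounded, exponentially decaying $\p G$; this gives $\w{\tau}^{-1/2}$.
For $\tau-|z|\ge2$, cut $G$ off at $|w|\le(\tau-|z|)/2$. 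On the inner piece, put the derivative on $E$, where $|\p E(\tau,z-w)|\ls\tau^{-1/2}(\tau-|z|)^{-3/2}$. On the outer piece, use the exponential decay of $G$.
For $\tau\le2$ the bound is immediate, because $\w{\xi}^{-6}$ and $|\xi|\w{\xi}^{-6}$ are integrable on $\R^2$.
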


\subsection{Some estimates of the initial boundary value problem}\label{YH-9}

\begin{lemma}
Suppose that the obstacle $\cO$ is star-shaped/convex for the Cauchy-Dirichlet/Neumann problem, respectively.
Set $\cK=\R^2\setminus\cO$ and let $w$ be the solution of the IBVP
\begin{equation*}
\left\{
\begin{aligned}
&\Box w=\sum_{\alpha=0}^2\p_{\alpha}G^\alpha+G^r,\qquad(t,x)\in[0,\infty)\times\cK,\\
&\cB w|_{[0,\infty)\times\p\cK}=0,\\
&(w,\p_tw)(0,x)=(w_0,w_1)(x),\quad x\in\cK,
\end{aligned}
\right.
\end{equation*}
where $(w_0,w_1)$ has compact support and $\supp_x(G^r,G^\alpha)(t,x)\subset\{x:|x|\le t+M_0\}$.
Then it holds that for any $\eta\in(0,1/2)$,
\begin{equation}\label{dpw:ibvp}
\begin{split}
&\quad\w{x}^{1/2}\w{t-|x|}|\p w|\ls\|(w_0,w_1)\|_{H^9(\cK)}+\ln(2+t)\sum_{|a|\le8}\sup_{s\in[0,t]}\w{s}\|\p^a(G^r,\p^{\le1}G^\alpha)(s)\|_{L^2(\cK_3)}\\
&\qquad\qquad+\ln(2+t)[\sum_{|a|\le8}\sup_{y\in\cK}|Z^a(G^r,\p^{\le1}G^\alpha)(0,y)|+\w{t}^\eta\sup_{y\in\cK}|\p^{\le1}G^\alpha(0,y)|]\\
&\qquad\qquad+\w{t}^\eta\ln^2(2+t)\sum_{|a|\le10}\sup_{(s,y)\in[0,t]\times\overline{\R^2\setminus\cK_2}}\w{y}^{1/2}\cW_{1,1}(s,y)|G^\alpha(s,y)|\\
&\qquad\qquad+\ln(2+t)\sum_{|a|\le9}\sup_{(s,y)\in[0,t]\times\overline{\R^2\setminus\cK_2}}\w{y}^{1/2}\cW_{3/2+\eta,1}(s,y)|\p^aG^r(s,y)|.
\end{split}
\end{equation}
In addition, for $G^r\equiv0$ and $R>1$, one has
\begin{equation}\label{loc:ibvp}
\begin{split}
&\w{t}\|\p^{\le1}w\|_{L^2(\cK_R)}
\ls\|(w_0,w_1)\|_{H^1(\cK)}+\w{t}^\eta\ln(2+t)\sup_{y\in\cK}|\p^{\le1}G^\alpha(0,y)|\\
&\quad+\ln(2+t)\sum_{|a|\le1}\sup_{s\in[0,t]}\w{s}\|\p^aG^\alpha(s)\|_{L^2(\cK_3)}\\
&\quad+\w{t}^\eta\ln^2(2+t)\sum_{|a|\le2}\sup_{(s,y)\in[0,t]\times\overline{\R^2\setminus\cK_2}}\w{y}^{1/2}\cW_{1,1}(s,y)|Z^aG^\alpha(s,y)|.
\end{split}
\end{equation}
\end{lemma}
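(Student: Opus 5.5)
We prove \eqref{dpw:ibvp} and \eqref{loc:ibvp} by the standard cutoff decomposition, which splits the exterior problem into a Cauchy problem in $\R^2$, handled by \eqref{dpw:ivp} and \eqref{pw:ivp}, and a compactly supported IBVP, handled by the local energy decay \eqref{loc:decay}.

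\emph{Decomposition.} Fix the cutoff $\chi=\chi_{[1,2]}(|x|)$, which vanishes near $\p\cK$ and equals $1$ for $|x|\ge2$. Let $v$ solve the Cauchy problem in $\R^2$
\[
\Box v=\sum_\alpha\p_\alpha(\chi G^\alpha)+\chi G^r,
\]
with data $(\chi w_0,\chi w_1)$. Then $\chi v$ satisfies the boundary condition trivially. The remainder $\tilde w=w-\chi v$ solves an IBVP with data $((1-\chi)w_0,(1-\chi)w_1)$ and source
\[
\sum_\alpha \p_\alpha((1-\chi)G^\alpha)+\sum_\alpha(\p_\alpha\chi)G^\alpha+(1-\chi)G^r-[\Box,\chi]v .
\]
Every term of this source is supported in $\{|x|\le2\}$. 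The commutator $[\Box,\chi]v=-2\nabla\chi\cdot\nabla v-(\Delta\chi)v$ involves only $v$ and $\p v$ on the annulus $1\le|x|\le2$.

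\emph{Estimate for $v$.} For the divergence part, write $\p_\alpha(\chi G^\alpha)$ and apply Duhamel with the $\p_\alpha$ moved onto the fundamental solution. The time component $\p_t G^0$ produces the initial term $G^0(0,\cdot)$. This is where $\w{t}^\eta\sup|\p^{\le1}G^\alpha(0,y)|$ and the weight $\cW_{1,1}$ with a $\w{t}^\eta$ loss come from. The gain is that $G^\alpha$ needs only weight $\cW_{1,1}$, not $\cW_{1+\mu+\nu,1}$. Concretely, $\p v$ equals $\p$ applied to the solution of $\Box v_\alpha=\chi G^\alpha$, so $\p^2 v_\alpha$ is estimated. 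Here we use \eqref{dpw:ivp} with $\mu=\eta$ applied to $\p v_\alpha$, whose source $\p(\chi G^\alpha)$ is controlled by $Z$-derivatives. The sharper $\w{t-|x|}^{-1}$ decay is then recovered from the $\w{t-|x|}^{1+\mu}$ bound at the cost of $\w{t}^\eta$ and logarithms. For the $G^r$ part, apply \eqref{dpw:ivp} directly with $1+\mu+\nu=3/2+\eta$. The data are treated by \eqref{dpw:ivp} and \eqref{pw:ivp}, giving $\|(w_0,w_1)\|_{H^9}$ via Sobolev since the data have compact support.

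\emph{Estimate for $\tilde w$.} Write $\tilde w=\tilde w_1+\tilde w_2$.
\begin{itemize}
\item $\tilde w_1$ solves the IBVP with the compactly supported source and data. Lemma \ref{lem:loc:decay} gives
\[
\w{t}\|\p^{a}\tilde w_1\|_{L^2(\cK_R)}\ls \|(w_0,w_1)\|_{H^{m}}+\ln(2+t)\sup_s\w{s}\|\p^{\le m-1}(\text{source})\|_{L^2},
\]
where the source contains the local norms of $G$ (these give the $\|\cdot\|_{L^2(\cK_3)}$ terms) and of $v$ on the annulus. For $\p^{\le m}v$ on $\{1\le|x|\le2\}$ we use the pointwise bounds of the previous step, which give $\w{s}^{-1}$ decay up to $\w{s}^{\eta}$ and log factors.
\item To pass from the local region to $|x|\ge2$, write $\tilde w=(1-\chi)\tilde w+\chi\tilde w$. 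The piece $\chi\tilde w$ solves a Cauchy problem whose source $[\Box,\chi]\tilde w$ is supported in a compact set. Lemma \ref{lem:pw:compact}, estimate \eqref{dpw:compact}, bounds $\w{x}^{1/2}\w{t-|x|}|\p(\chi\tilde w)|$ by $\sup_s\w{s}|(1-\Delta)^3[\Box,\chi]\tilde w|$. By Sobolev on the bounded region, this is controlled by $\w{s}\|\p^{\le 8}\tilde w\|_{L^2(\cK_3)}$, which the local decay bound above provides. The $H^9$ and 8--10 derivative counts arise from this chain.
\end{itemize}

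\emph{The $G^r\equiv0$ case.} Estimate \eqref{loc:ibvp} follows the same route at low regularity, using only the local energy decay step and the $v$ bounds on the annulus. Here one derivative in \eqref{loc:decay} and \eqref{dpw:ivp} suffices.

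\emph{Main obstacle.} The delicate step is the divergence-form source estimate for the Cauchy piece $v$ under the weak weight $\cW_{1,1}$, that is, obtaining $\w{t-|x|}^{-1}$ decay with only a $\w{t}^\eta\ln^2$ loss. I would first try to derive it from \eqref{dpw:ivp} via integration by parts in time in the Duhamel formula, with a dyadic splitting in $\w{s-|y|}$ to absorb the missing $\mu+\nu$ power into $\w{t}^\eta$. The second source of loss is the $\ln(2+t)$ in the local energy decay, which must not compound beyond the stated $\ln^2$.
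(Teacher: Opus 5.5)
Your overall architecture matches the paper's. You split off a Cauchy problem carrying the far part of the source and write its divergence part as $\sum_\alpha\p_\alpha w^\alpha+w^r$, with $w^r$ absorbing the $G^0(0)$ data. You then cut it off and treat the remainder as an IBVP with compactly supported source via \eqref{loc:decay}. The paper proves only \eqref{loc:ibvp} in the Dirichlet case and otherwise quotes (3.1), (3.11) of Hou--Yin--Yuan. Even for \eqref{loc:ibvp}, your decomposition as set up does not give the stated right-hand side, for two reasons.

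\emph{Data placement.} You put $(\chi w_0,\chi w_1)$ into the Cauchy piece. Bounding that piece on the annulus through \eqref{pw:ivp} and \eqref{dpw:ivp} costs the weighted $L^\infty$ norms $\cA_{3,0},\cA_{4,1}$ of the data. In two dimensions these are not controlled by $\|(w_0,w_1)\|_{H^1(\cK)}$, so ``one derivative suffices'' fails for that piece. The paper keeps all the data, together with the near part of the source, in an IBVP piece $w_1^b$ estimated by \eqref{loc:decay} with $m=1$ alone.

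\emph{Single cutoff.} Cutting both the source and the solution with the same $\chi_{[1,2]}$ puts $Z^{\le2}G^\alpha$ on $1\le|y|\le2$ into the weighted supremum. But \eqref{loc:ibvp} has that supremum only over $|y|\ge2$, and pointwise values on $1\le|y|\le2$ are not controlled by $\|\p^{\le1}G^\alpha\|_{L^2(\cK_3)}$. The paper uses $\chi_{[2,3]}$ on the source and $\chi_{[1,2]}$ on the solution, so that $\chi_{[1,2]}\equiv1$ on the source's support.

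Your ``main obstacle'' is not one. Apply \eqref{dpw:ivp} with $\mu=\nu=\eta/2$ to $w^\alpha$ and $\p w^\alpha$. The left side $\w{x}^{1/2}\w{t-|x|}^{1+\eta/2}|\p^{\le1}\p w^\alpha|$ already dominates what you need. The right side converts via $\cW_{1+\eta,1}(s,y)\ls\w{t}^\eta\cW_{1,1}(s,y)$ for $s\le t$, $|y|\le s+M_0$. No integration by parts in time or dyadic splitting is needed. Your choice $\mu=\eta$ forces $\mu+\nu>\eta$ and overshoots the $\w{t}^\eta$ loss.

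For \eqref{dpw:ibvp} one estimate is genuinely missing. The commutator $[\Box,\chi]v$ contains the zero-order term $(\Delta\chi)v$. So \eqref{loc:decay} needs $\w{s}|v(s,x)|$ bounded on $1\le|x|\le2$, including for the non-divergence part $v^r$ with $\Box v^r=\chi G^r$. Estimate \eqref{dpw:ivp} controls only $\p v^r$. Integrating $\p_r v^r$ inward from $|x|=s+M_0$ against $\w{\rho}^{-1/2}\w{s-\rho}^{-1-\mu}$ gives only $O(\w{s}^{-1/2})$, since the range $\rho\approx s$ dominates. You need a separate pointwise bound for the undifferentiated solution, for instance from the explicit 2-D kernel. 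That is exactly why $G^r$ carries the stronger weight $\cW_{3/2+\eta,1}$, and none of the lemmas you invoke supplies it.
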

\begin{proof}
For the Cauchy-Neumann problem, \eqref{dpw:ibvp} and \eqref{loc:ibvp} come from (3.1) and (3.11) of \cite{HouYinYuan26}, respectively.
Next we show  \eqref{loc:ibvp} for the Cauchy-Dirichlet problem.
Note that although the related proof is analogous to that in \cite{HouYinYuan26},
we still give the details for the completeness.
As in \cite{HouYinYuan24,HouYinYuan26}, let $w_1^b$ and $w_2^b$ be the solutions of
\begin{equation}\label{loc:ibvp:pf1}
\begin{split}
&\Box w_1^b:=(1-\chi_{[2,3]}(x))(\sum_{\alpha=0}^2\p_{\alpha}G^\alpha)-\sum_{i=1}^2\p_i(\chi_{[2,3]}(x))G^i,\\
&\Box w_2^b:=\sum_{\alpha=0}^2\p_{\alpha}(\chi_{[2,3]}(x)G^\alpha),\\
&w_1^b|_{[0,\infty)\times\p\cK}=w_2^b|_{[0,\infty)\times\p\cK}=0,\\
&(w_1^b,\p_tw_1^b)(0,x)=(w_0,w_1)(x),\quad (w_2^b,\p_tw_2^b)(0,x)=(0,0),
\end{split}
\end{equation}
respectively.
Then the uniqueness of smooth solution to the IBVP ensures that $w=w_1^b+w_2^b$.
For the estimate of $w_1^b$, it follows from $\supp_x\Box w_1^b\subset\{x:|x|\le3\}$ and \eqref{loc:decay} with $m=1$ that
\begin{equation}\label{loc:ibvp:pf2}
\begin{split}
\w{t}\|\p^{\le1}w_1^b\|_{L^2(\cK_R)}
&\ls\|(w_0,w_1)\|_{H^1(\cK)}+\ln(2+t)\sup_{s\in[0,t]}\w{s}\|\Box w_1^b(s)\|_{L^2(\cK)}\\
&\ls\|(w_0,w_1)\|_{H^1(\cK)}+\ln(2+t)\sum_{|a|\le1}\sup_{s\in[0,t]}\w{s}\|\p^aG^\alpha(s)\|_{L^2(\cK_3)}.
\end{split}
\end{equation}
Next we estimate $w_2^b$.
Let $w_2^c$ be the solution of the following Cauchy problem
\begin{equation}\label{loc:ibvp:pf3}
\Box w_2^c=\left\{
\begin{aligned}
&\sum_{\alpha=0}^2\p_{\alpha}(\chi_{[2,3]}(x)G^\alpha),\qquad&& x\in\cK=\R^2\setminus\cO,\\
&0,&& x\in\overline{\cO}
\end{aligned}
\right.
\end{equation}
with $(w_2^c, \p_tw_2^c)(0,x)=(0,0)$.
Define $v_2^b=\chi_{[1,2]}(x)w_2^c$ on $\cK$ and thus $v_2^b$ is the solution of the IBVP
\begin{equation}\label{loc:ibvp:pf4}
\begin{split}
&\Box v_2^b=\chi_{[1,2]}\Box w_2^c-[\Delta,\chi_{[1,2]}]w_2^c=\sum_{\alpha=0}^2\p_{\alpha}(\chi_{[2,3]}(x)G^\alpha)-[\Delta,\chi_{[1,2]}]w_2^c,\\
&v_2^b|_{[0,\infty)\times\p\cK}=0,\quad (v_2^b,\p_tv_2^b)(0,x)=(0,0).
\end{split}
\end{equation}
On the other hand, let $\tilde v_2^b$ be the solution of the IBVP
\begin{equation}\label{loc:ibvp:pf5}
\Box\tilde v_2^b=[\Delta,\chi_{[1,2]}]w_2^c,
\quad\tilde v_2^b|_{[0,\infty)\times\p\cK}=0,\quad (\tilde v_2^b,\p_t\tilde v_2^b)(0,x)=(0,0).
\end{equation}
Consequently, it is deduced from \eqref{loc:ibvp:pf1}, \eqref{loc:ibvp:pf4} and \eqref{loc:ibvp:pf5} that
\begin{equation}\label{loc:ibvp:pf6}
w_2^b=\chi_{[1,2]}(x)w_2^c+\tilde v_2^b.
\end{equation}
Since $\Box\tilde v_2^b$ is supported in $\{x:|x|\le2\}$, similarly to \eqref{loc:ibvp:pf2} for $w_1^b$, one obtains
\begin{equation}\label{loc:ibvp:pf7}
\begin{split}
\w{t}\|\p^{\le1}\tilde v_2^b\|_{L^2(\cK_R)}
&\ls\ln(2+t)\sup_{s\in[0,t]}\w{s}\|[\Delta,\chi_{[1,2]}]w_2^c(s)\|_{L^2(\cK_2)}\\
&\ls\ln(2+t)\sum_{|a|\le1}\sup_{s\in[0,t]}\w{s}\|\p_x^aw_2^c(s)\|_{L^\infty(|y|\le2)}.
\end{split}
\end{equation}
We now deal with $w_2^c$ in the right hand side of \eqref{loc:ibvp:pf7}.
According to the definition \eqref{loc:ibvp:pf3}, one sees that $\ds w_2^c=\sum_{\alpha=0}^2\p_{\alpha}w^{\alpha}+w^r$ with $w^{\alpha}$ and $w^r$ being the solutions to
\begin{equation}\label{loc:ibvp:pf8}
\begin{split}
&\Box w^{\alpha}=\chi_{[2,3]}(x)G^\alpha,\qquad \Box w^r=0,\\
&(w^{\alpha},\p_tw^{\alpha})(0,x)=(0,0),\quad (w^r,\p_tw^r)(0,x)=(0,-\chi_{[2,3]}(x)G^\alpha(0,x)),
\end{split}
\end{equation}
respectively.
Applying \eqref{dpw:ivp} to $w^{\alpha}$ and \eqref{pw:ivp} to $w^r$ with $\mu=\nu=\eta/2$, respectively, we have
\begin{equation}\label{loc:ibvp:pf9}
\begin{split}
&\w{x}^{1/2}\w{t-|x|}^{1+\eta/2}|\p^{\le1}\p w^{\alpha}|\ls\sum_{\alpha=0}^2\sup_{y\in\cK}|G^\alpha(0,y)|\\
&\qquad+\sum_{\alpha=0}^2\sum_{|a|\le2}\sup_{(s,y)\in[0,t]\times\overline{\R^2\setminus\cK_2}}\w{y}^{1/2}\cW_{1+\eta,1}(s,y)|Z^aG^\alpha(s,y)|,\\
&\w{t+|x|}^{1/2}\w{t-|x|}^{1/2}|\p^{\le1}w^r|\ls\sum_{\alpha=0}^2\sup_{y\in\cK}|G^{\alpha}(0,y)|,
\end{split}
\end{equation}
where the term $\sup_{y\in\cK}|G^\alpha(0,y)|$ in the first line of \eqref{loc:ibvp:pf9} comes from the initial data of $\p_tw^{\alpha}$.
Therefore, collecting \eqref{loc:ibvp:pf2}, \eqref{loc:ibvp:pf6} and \eqref{loc:ibvp:pf7}-\eqref{loc:ibvp:pf9} yields \eqref{loc:ibvp}.
\end{proof}

\section{Asymptotic behavior of the solution to linear IBVP}\label{sect3}

In this section, we will show the existence of the Friedlander radiation filed $F_0(\sigma,\theta)$ in \eqref{def:F0} and derive
some related estimates.
The main estimates in this section are stated as follows.
\begin{proposition}\label{prop31}
Let $w_{B}$ be the solution to the homogeneous linear IBVP \eqref{LWE}.
Then there exits a smooth function $F_0(\sigma,\theta)$ such that for $r\ge ct+1$, $c\in(0,1)$, $\sigma\in(-\infty,M_0]$,
\begin{equation}\label{LWE-error}
\begin{split}
|Z^a(w_{B}(t,x)-r^{-1/2}F_0(\sigma,\theta))|&\ls\w{t}^{-1}\ln(2+t),\\
|\p Z^a(w_{B}(t,x)-r^{-1/2}F_0(\sigma,\theta))|&\ls\w{t}^{-3/2}\w{\sigma}^{-1/2}\ln(2+|t)
\end{split}
\end{equation}
and
\begin{equation}\label{FRF-decay}
|Z^aF_0(\sigma,\theta)|\ls\w{\sigma}^{-1/2}\ln(2+|\sigma|),\quad |\p Z^aF_0(\sigma,\theta)|\ls\w{\sigma}^{-1}.
\end{equation}
\end{proposition}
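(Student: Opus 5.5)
The plan is to reduce the exterior problem \eqref{LWE} to a Cauchy problem plus a boundary-generated correction that is driven by a compactly supported source. Then we read off the radiation field of each piece separately.

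\textbf{Step 1: cutoff decomposition.} Set $w_{B}=\chi_{[1,2]}(x)w_{C}+v$, where $w_{C}$ solves the Cauchy problem \eqref{YH-2}; the data $(u_0^1,u_1^1)$ are extended by zero into $\cO$, using the compact support assumption. Then $v$ solves
\[
\Box v=-[\Delta,\chi_{[1,2]}]w_{C}=:H,\qquad \cB v|_{\p\cK}=0,\qquad (v,\p_tv)(0,x)=\big((1-\chi_{[1,2]})u_0^1,(1-\chi_{[1,2]})u_1^1\big),
\]
with $\supp_xH\subset\{|x|\le2\}$. By \eqref{pw:ivp} applied to $\p^aw_{C}$ on the compact set $|x|\le 2$, we get $|\p^aH(s,\cdot)|\ls\w{s}^{-1}$. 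Next we cut $v$ once more. Let $\tilde H:=\Box(\chi_{[1,2]}v)$; it is supported in $|x|\le2$ and equals $H$ plus commutator terms in $v$ there. Lemma~\ref{lem:loc:decay} gives $\w{t}\|\p^a v\|_{L^2(\cK_R)}\ls 1+\ln(2+t)\sup_s\w{s}\|\p^aH\|\ls\ln(2+t)$. Combined with the elliptic estimate \eqref{ellip} and Sobolev embedding, this yields $|\p^a\tilde H(s,y)|\ls\w{s}^{-1}\ln(2+s)$. Outside $|x|\le 2$ we have $v=\chi_{[1,2]}v=:v^c$, which solves a Cauchy problem on $\R^2$ with source $\tilde H$ compactly supported in $x$.

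\textbf{Step 2: radiation field of $v^c$.} For $r\ge ct+1$, write $v^c$ via the 2-D Duhamel/Kirchhoff formula:
\[
v^c(t,x)=\int_0^t\!\!\int_{|y|\le2}\frac{\tilde H(s,y)\,\mathbf 1_{\{t-s>|x-y|\}}}{2\pi\sqrt{(t-s)^2-|x-y|^2}}\,dy\,ds.
\]
In the region $r\ge ct+1$ we expand $|x-y|=r-\omega\cdot y+O(r^{-1})$ and $(t-s)^2-|x-y|^2=(t-s+|x-y|)(t-s-|x-y|)\approx 2r(\,-\sigma-s+\omega\cdot y+O(r^{-1}))$. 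This suggests the candidate
\[
F^v(\sigma,\theta)=\frac{1}{2\pi\sqrt2}\int\!\!\int\frac{\tilde H(s,y)\,\mathbf 1_{\{-\sigma-s+\omega\cdot y>0\}}}{\sqrt{-\sigma-s+\omega\cdot y}}\,dy\,ds .
\]
We then set $F_0=\tilde F_0+F^v$, with $\tilde F_0$ from \eqref{YH-3}, which already satisfies \eqref{YH-4}. Using $|\tilde H|\ls\w{s}^{-1}\ln(2+s)$, the $s$-integral of $\w{s}^{-1}\ln\w{s}\,(|\sigma|-s)^{-1/2}$ over $s\le|\sigma|+2$ gives the bound $\w{\sigma}^{-1/2}\ln(2+|\sigma|)$ in \eqref{FRF-decay}. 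Derivatives in $\sigma$ move onto $\tilde H$ through $\p_s$ after changing variables, which gains one power and gives $|\p F_0|\ls\w{\sigma}^{-1}$. The rotation $\Omega$ acts on $\theta$, equivalently on $y$, so it also falls on $\tilde H$; the $\p_t,\p_x$ derivatives behave the same way.

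\textbf{Step 3: error estimates (main obstacle).} The key term is $v^c-r^{-1/2}F^v$, where we must control both the Taylor-expansion error and the contribution from $s$ near $t$, i.e.\ the region where $t-s\sim r$ fails. Since $r\ge ct+1$ and $|y|\le2$, the kernel error is relatively $O(\w{t-s+r}^{-1}+\ldots)$. Integrating against $\w{s}^{-1}\ln\w{s}$ produces $\w{t}^{-1}\ln(2+t)$ for the first line of \eqref{LWE-error}. The $\ln$ loss is unavoidable here and is consistent with the statement. For $\p$-derivatives we again trade $\p_t$ for $\p_s$ on $\tilde H$ by integrating by parts; this improves $s$-decay to $\w{s}^{-2}$ and yields the extra $\w{t}^{-1/2}\w{\sigma}^{-1/2}$. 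The delicate part is the near-cone singularity $(-\sigma-s+\omega\cdot y)^{-1/2}$ when $\sigma\to M_0$: there we will split $s\le t/2$ and $s\ge t/2$ and bound each piece separately, in the spirit of Lemma~\ref{lem:pw:compact}. Finally, since $\chi_{[1,2]}\equiv1$ for $r\ge ct+1\ge2$, combining with \eqref{YH-4} for $w_C$ finishes the proof of \eqref{LWE-error}.
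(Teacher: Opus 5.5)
Your overall idea is the paper's: represent the far field as a Cauchy problem with a compactly supported source, then take $r\to\infty$ in the fundamental solution. But as written, both your decomposition and your derivative mechanism fail to give the stated bounds.

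\textbf{The detour through $w_C$ costs a logarithm.} Because $v$ carries the forcing $H$, Lemma \ref{lem:loc:decay} only gives $\w{t}\|\p^av\|_{L^2(\cK_R)}\ls\ln(2+t)$. Hence $|\tilde H(s,y)|\ls\w{s}^{-1}\ln(2+s)$. With this source the integral in Step 2 is not $\w{\sigma}^{-1/2}\ln(2+|\sigma|)$: the range $s\le|\sigma|/2$ alone gives $\w{\sigma}^{-1/2}\int_0^{|\sigma|/2}\w{s}^{-1}\ln(2+s)\,ds\sim\w{\sigma}^{-1/2}\ln^2(2+|\sigma|)$.

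The same happens in Step 3. There the relative kernel error is $O(|\sigma+s|/r)$ (compare $t-s+|x-y|$ with $2r$), not $O(\w{t-s+r}^{-1})$, and it is of order one when $|\sigma|\sim r$. Integrated against your source it gives $r^{-1}\int_0^t|\sigma+s|^{1/2}\w{s}^{-1}\ln(2+s)\,ds\ls r^{-1/2}\ln^2(2+t)$ for $\sqrt r\,v^c-F^v$. After dividing by $\sqrt r$, this is $\w{t}^{-1}\ln^2(2+t)$ in the first line of \eqref{LWE-error}.

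The repair is the paper's choice: cut off $w_B$ itself, $w^c=\chi_{[1/2,1]}w_B$. Its source $-[\Delta,\chi_{[1/2,1]}]w_B$ is $O(\w{\tau}^{-1})$ with no logarithm, by \eqref{loc:decay} with $H=0$. The same computations then give exactly one $\ln$. (You also dropped the nonzero Cauchy data $\chi_{[1,2]}(1-\chi_{[1,2]})u_k^1$ of $v^c$; that is harmless.)

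\textbf{Moving $\p_\sigma$ (or $\p_t$) onto the source through $\p_s$ does not produce the extra decay.} Integrating by parts over $s\in[0,\infty)$ creates the boundary term $-\int\tilde H(0,y)(-\sigma+\omega\cdot y)_+^{-1/2}dy\approx-|\sigma|^{-1/2}\int\tilde H(0,y)\,dy$, and $\tilde H(0,\cdot)\not\equiv0$ in general. The remaining integral has a piece of the same size from $s=O(1)$, even if you had $|\p_s\tilde H|\ls\w{s}^{-2}$. In any case \eqref{loc:decay} does not provide that: it gives the same rate $\w{t}^{-1}$ for every $\p^a$, time derivatives included. These two pieces of size $\w{\sigma}^{-1/2}$ only cancel against each other. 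Bounding them separately therefore cannot give $|\p F_0|\ls\w{\sigma}^{-1}$, nor the gain $\w{t}^{-1/2}\w{\sigma}^{-1/2}$ in the second line of \eqref{LWE-error}.

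The paper instead keeps the derivative on the kernel. Differentiating $\chi_+^{-1/2}(s-\sigma-\tau)$ in $\sigma$ gives $-\chi_+^{-3/2}(s-\sigma-\tau)$. Since the Radon variable of the source lies in $|s|\le1$, the singularity is met only when $|\sigma+\tau|\ls1$. Hence the integrand is bounded by $\w{\sigma+\tau}^{-3/2}\w{\tau}^{-1}$, and $\int_0^\infty\w{\sigma+\tau}^{-3/2}\w{\tau}^{-1}d\tau\ls\w{\sigma}^{-1}$.

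The error terms are handled the same way. The paper writes $\sqrt r\,w_1^c=F(\sigma,\theta,1/r)$ and controls $F(\sigma,\theta,1/r)-F(\sigma,\theta,0)$ through $\p_zF$, which brings in $\chi_+^{-3/2}$ and $\chi_+^{-5/2}$ against the $\w{\tau}^{-1}$ source. No extra time decay of the source is ever needed.
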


To prove Proposition \ref{prop31}, it suffices to estimate $w_{B}$ in the region $|x|\ge1$.
To this end, let $w^c(t,x)=\chi_{[1/2,1]}(x)w_{B}(t,x)$ be a function on $\R^2$, which satisfies
\begin{equation}\label{prop31:pf1}
\Box w^c=-[\Delta,\chi_{[1/2,1]}]w_{B}
\end{equation}
with the initial data $(w^c,\p_tw^c)(0,x)=(\chi_{[1/2,1]}u_0^1,\chi_{[1/2,1]}u_1^1)(x)$.
Note that $w^c$ can be decomposed into two parts: $w^c=w^c_1+w^c_2$, such that
\begin{equation}\label{prop31:pf2}
\begin{split}
&\Box w^c_1=-[\Delta,\chi_{[1/2,1]}]w_{B}, \quad (w^c_1,\p_tw^c_1)(0,x)=(0,0),\\
&\Box w^c_2=0, \quad (w^c_2,\p_tw^c_2)(0,x)=(\chi_{[1/2,1]}u_0^1,\chi_{[1/2,1]}u_1^1)(x).
\end{split}
\end{equation}
At first, we focus on the estimate of $w^c_1$.
\begin{lemma}
Let $w_{B}$ and $w_1^c$ be the solutions to \eqref{LWE} and \eqref{prop31:pf2}, respectively.
Then there exits a smooth function $F_0^1(\sigma,\theta)$ such that for $r\ge ct+1$, $c\in(0,1)$,
\begin{equation}\label{LWE-err1}
\begin{split}
|Z^a(w^c_1(t,x)-r^{-1/2}F_0^1(\sigma,\theta))|&\ls\w{t}^{-1}\ln(2+t),\\
|\p Z^a(w_{1}^c(t,x)-r^{-1/2}F_0^1(\sigma,\theta))|&\ls\w{t}^{-3/2}\w{\sigma}^{-1/2}\ln(2+t)
\end{split}
\end{equation}
and
\begin{equation}\label{FRF1-decay}
|\p_{\theta}^jF_0^1(\sigma,\theta)|\ls\w{\sigma}^{-1/2}\ln(2+|\sigma|),\quad |\p_{\sigma}^i\p_{\theta}^jF_0^1(\sigma,\theta)|\ls\w{\sigma}^{-1},\quad i\ge1, j\ge 0.
\end{equation}
\end{lemma}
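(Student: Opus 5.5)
The source term $H(t,x):=-[\Delta,\chi_{[1/2,1]}]w_B(t,x)$ is supported in the annulus $\{1/2\le|x|\le1\}$. By the local energy decay \eqref{loc:decay} with $m$ large, applied to $w_B$ (zero forcing), we get $\|\p^a w_B(s)\|_{L^2(\cK_R)}\ls\w{s}^{-1}$; combined with the elliptic estimate \eqref{ellip} and Sobolev embedding this gives
\[
|\p^a H(s,y)|\ls\w{s}^{-1}.
\]
Then $w_1^c$ is given by Duhamel's formula with the 2-D fundamental solution:
\[
w_1^c(t,x)=\frac{1}{2\pi}\int_0^t\int_{|y|\le1}\frac{H(s,y)}{\sqrt{(t-s)^2-|x-y|^2}}\,\chi\,dy\,ds ,
\]
where $\chi$ restricts to $t-s>|x-y|$.

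I would define the radiation field directly. In the region $r\ge ct+1$ with $|y|\le1$, write $|x-y|=r-\langle\omega,y\rangle+O(1/r)$. Then
\[
(t-s)^2-|x-y|^2=\bigl(t-s-|x-y|\bigr)\bigl(t-s+|x-y|\bigr),
\]
and the second factor equals $2r+O(\w{\sigma}+\w{s})$. This suggests setting
\[
F_0^1(\sigma,\theta)=\frac{1}{2\pi\sqrt2}\int_0^\infty\int_{|y|\le1}\frac{H(s,y)\,\mathbf 1_{\{-\sigma-s+\langle\omega,y\rangle>0\}}}{\sqrt{-\sigma-s+\langle\omega,y\rangle}}\,dy\,ds ,
\]
which is exactly the Friedlander construction \eqref{YH-3}, with $\chi_+^{-1/2}$ convolved against the Radon transform of $H(s,\cdot)$ and integrated over $s$. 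Only $s\le-\sigma+1$ contributes, so for $\sigma\le M_0$ the integral is over a finite range. Using $|H|\ls\w{s}^{-1}$ and the integrability of $(\cdot)^{-1/2}$, splitting into $s\le|\sigma|/2$ and $s\ge|\sigma|/2$ gives
\[
|F_0^1|\ls\w{\sigma}^{-1/2}\ln(2+|\sigma|).
\]
For $\p_\sigma$-derivatives, I would integrate by parts in $s$: since $H$ depends on $s$ only through $s+\sigma$ inside the kernel, $\p_\sigma$ can be traded for $\p_s$, and $|\p_s H|\ls\w{s}^{-1}$ as well. This should give $\w{\sigma}^{-1}$, with the log removed because the boundary term at $s=0$ and the endpoint $s\approx-\sigma$ each contribute $\w{\sigma}^{-1}$ after using $\w{s}^{-1}$ and the $(\cdot)^{+1/2}$ primitive. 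This may need care; a cleaner route is to write $\p_\sigma F_0^1$ as the convolution of $\chi_+^{-1/2}$ with the $s$-derivative and bound it by $\int\w{s}^{-2}\w{\sigma+s}^{-1/2}ds+\w{\sigma}^{-1}$. The $\theta$-derivatives fall on $\langle\omega,y\rangle$, and the resulting $\p_\sigma$ factor is handled the same way. This gives \eqref{FRF1-decay}.

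For \eqref{LWE-err1}, I would compare the kernels. With $r^{-1/2}$ pulled out,
\[
\frac{1}{\sqrt{(t-s)^2-|x-y|^2}}-\frac{r^{-1/2}}{\sqrt2\,\sqrt{-\sigma-s+\langle\omega,y\rangle}}
\]
has relative error $O\bigl((\w{\sigma}+s)/r\bigr)$ in the second factor and $O(1/r)$ in the position of the singularity. Since $r\gtrsim t$, integrating against $\w{s}^{-1}$ over $s\le t$ gives
\[
r^{-1/2}\,t^{-1}\int_0^t\frac{\w{s}+\w{\sigma}}{\w{s}}\,|\cdots|^{-1/2}\,ds\ls t^{-1}\ln(2+t).
\]
The shift of the singularity is controlled by a mean value argument, which costs one $\p_s$ of $H$. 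The part of $F_0^1$ with $s>t$ vanishes when $\sigma\ge-t+1$; otherwise it is bounded by the tail $\int_t^\infty$, which is also $O(t^{-1/2}\ln t)\cdot r^{-1/2}$. For the $\p Z^a$ estimate, note that $\p_t,\p_r$ act on the kernel essentially as $-\p_\sigma$ plus an $O(1/r)$ error, and $\p_\sigma$ is moved onto $H$ by integrating by parts in $s$ as above, which yields the extra factor $\w{t}^{-1/2}\w{\sigma}^{-1/2}$. Rotations $\Omega$ commute with $\Box$ but not with the cutoff, so $\Omega^a w_1^c$ solves the same type of problem with source $\Omega^aH$, which is still compactly supported and $O(\w{s}^{-1})$. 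Thus the whole argument applies verbatim to $Z^a w_1^c$.

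I expect the main obstacle to be the derivative estimate with the sharp rate $\w{t}^{-3/2}\w{\sigma}^{-1/2}\ln(2+t)$. It requires the $\p_\sigma\leftrightarrow\p_s$ integration by parts near the singular endpoint $s\approx t-|x-y|$, together with the boundary term at $s=0$. I would first try to handle this via \eqref{dpw:compact}-type pointwise bounds applied to the difference.
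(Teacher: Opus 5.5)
Your zeroth-order estimates (the bound $|F_0^1|\ls\w{\sigma}^{-1/2}\ln(2+|\sigma|)$ and the first line of \eqref{LWE-err1}) are fine in outline, and your $F_0^1$ coincides with the paper's. There is a genuine gap in every estimate that involves a derivative. You trade $\p_\sigma$ for the \emph{time} derivative $\p_s$ and integrate by parts in $s$, which gives
\[
2\sqrt2\pi\,\p_\sigma F_0^1=-\int H(0,y)\bigl(\langle\omega,y\rangle-\sigma\bigr)_+^{-1/2}dy-\int_0^\infty\!\!\int\p_sH(s,y)\bigl(\langle\omega,y\rangle-\sigma-s\bigr)_+^{-1/2}dy\,ds .
\]
\begin{itemize}
\item The boundary term behaves like $|\sigma|^{-1/2}\int H(0,y)dy$ as $\sigma\to-\infty$. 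Since $\int H(0,y)dy=\int\Delta\chi_{[1/2,1]}\,u_0^1\,dy$ is in general nonzero, this term is of size $\w{\sigma}^{-1/2}$, not $\w{\sigma}^{-1}$ as you claim.
\item The bulk term, with only $|\p_sH|\ls\w{s}^{-1}$ from \eqref{loc:decay}, is of size $\w{\sigma}^{-1/2}\ln\w{\sigma}$.
\item Your ``cleaner'' bound $\int\w{s}^{-2}\w{\sigma+s}^{-1/2}ds$ assumes decay that \eqref{loc:decay} does not provide. Even so, it is still $\gtrsim\w{\sigma}^{-1/2}$ because of the contribution from $s$ near $0$.
\end{itemize}
The true rate $\w{\sigma}^{-1}$ comes from a cancellation between the two pieces, and absolute-value bounds cannot see it.

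The same loss breaks the second line of \eqref{LWE-err1}. Write $\p_tw_1^c$ as the Duhamel integral of $\p_tH$ plus the free wave with data $(0,H(0,\cdot))$. Each piece alone has radiation-field error of size $t^{-3/2}\w{\sigma}^{1/2}$; for data $(0,\delta)$, for instance, $\frac{1}{2\pi}(t^2-r^2)_+^{-1/2}=\frac{(-\sigma)_+^{-1/2}}{2\pi\sqrt{2r}}\bigl(1-\frac{\sigma}{2r}\bigr)^{-1/2}$. Only their sum is $O(t^{-3/2}\w{\sigma}^{-1/2}\ln(2+t))$, so you lose a factor $\w{\sigma}\sim t$. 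Your fallback \eqref{dpw:compact} only bounds $\p w_1^c$ itself by $r^{-1/2}\w{\sigma}^{-1}$, not its difference from the radiation-field term.

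The missing idea is to differentiate through the Radon (spatial) variable instead of time. The kernel depends on $p-\sigma-s$ with $p=\langle\omega,y\rangle$, so on it $\p_\sigma=-\p_p$. Integrate by parts in $p$ against the Radon transform $G(s,\theta,p)$ of $H(s,\cdot)$, which is smooth and supported in $|p|\le1$.
\begin{itemize}
\item There are no boundary terms, and only spatial derivatives of $H$ appear; these do decay like $\w{s}^{-1}$ by \eqref{loc:decay}.
\item The resulting homogeneous kernel $\chi_+^{-3/2}$ is bounded by $\w{\sigma+s}^{-3/2}$ away from its singularity, giving $\int_0^\infty\w{\sigma+s}^{-3/2}\w{s}^{-1}ds\ls\w{\sigma}^{-1}$. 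The mass now sits at $s\approx|\sigma|$, where $\w{s}^{-1}\approx\w{\sigma}^{-1}$.
\end{itemize}

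For the error terms the paper also keeps the quadratic form intact: $(t-\tau)^2-|x-y|^2=2r(y\cdot\omega-\sigma-\tau)+(\sigma+\tau)^2-|y|^2$. It then proceeds as follows.
\begin{itemize}
\item Pull out $\sqrt{2r}$ by homogeneity, set $z=1/r$, and absorb $|y|^2z/2$ into the Radon delta.
\item Bound the difference by $r^{-1}\sup_{0<z\le1/r}|\p_z(\cdots)|$. The $\p_z$ produces $(\sigma+\tau)^2\chi_+^{-3/2}$, or $(\sigma+\tau)^2\chi_+^{-5/2}$ for the derivative estimate, again paired in $p$.
\item This yields $r^{-1}\sqrt r\ln(2+t)$ and $r^{-1}\w{\sigma}^{-1/2}\ln(2+t)$, respectively.
\end{itemize}
A minor point: for $Z=\p_t$ your argument is not ``verbatim'', since $\p_tw_1^c$ has nonzero data $(0,H(0,\cdot))$.
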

\begin{proof}
Note that
\begin{equation}\label{LWE-err-pf1}
w^c_1(t,x)=\int_0^t\int_{\R^2}E(t-\tau,x-y)G(\tau,y)dyd\tau,
\end{equation}
where
\begin{equation}\label{LWE-err-pf2}
\begin{split}
& G=-[\Delta,\chi_{[1/2,1]}]w_{B},\qquad E(t,x)=\frac{1}{2\sqrt{\pi}}\chi_+^{-1/2}(t^2-|x|^2),\\
& \chi_+^{-1/2}(s)=s^{-1/2}/\Gamma(\frac{1}{2}),\quad s>0;\quad \chi_+^{-1/2}(s)=0, \quad s\le0.
\end{split}
\end{equation}
Due to $\supp_yG(\tau,y)\subset\{y: |y|\le1\}$, $\supp_x w_1^c(t,x)$ is included in $\{x: |x|\le1+t\}$.
For $x=r\omega$ and $\sigma=r-t$, one has
\begin{equation*}
(t-\tau)^2-|x-y|^2=2r(y\cdot\omega-\sigma-\tau)+(\sigma+\tau)^2-|y|^2.
\end{equation*}
Then it follows from \eqref{LWE-err-pf1}, \eqref{LWE-err-pf2} and the homogeneity of $\chi_+^{-1/2}$ that
\begin{equation}\label{LWE-err-pf3}
\begin{split}
&2\sqrt{2\pi}\sqrt{r}w^c_1(t,x)=\int_0^t\int_{\R^2}\chi_+^{-1/2}(y\cdot\omega-\sigma-\tau+\frac{(\sigma+\tau)^2-|y|^2}{2r})G(\tau,y)dyd\tau\\
&=\int_0^t\int_{\R^2}\int\chi_+^{-1/2}(s-\sigma-\tau+\frac{(\sigma+\tau)^2}{2r})\delta(s-y\cdot\omega+\frac{|y|^2}{2r})G(\tau,y)dsdyd\tau\\
&=2\sqrt{2\pi}F(\sigma,\theta,r^{-1}),
\end{split}
\end{equation}
where $\delta(\cdot)$ is Dirac function, $F(\sigma,\theta,z)$ is supported in $\{\sigma\le M_0\}$, and
\begin{equation}\label{LWE-err-pf4}
\begin{split}
F(\sigma,\theta,z)&=\frac{1}{2\sqrt{2\pi}}\int_0^{1/z-\sigma}\int\chi_+^{-1/2}(s-\sigma-\tau+\frac{z(\sigma+\tau)^2}{2})G(\tau,\theta,s,z)dsd\tau,\\
G(\tau,\theta,s,z)&=-\int_{\R^2}\delta(s-y\cdot\omega+\frac{|y|^2z}{2})[\Delta,\chi_{[1/2,1]}]w_{B}(\tau,y)dy.\\
\end{split}
\end{equation}
Define
\begin{equation}\label{LWE-err-pf5}
\begin{split}
F_0^1(\sigma,\theta)&=F(\sigma,\theta,0+)=\frac{1}{2\sqrt{2\pi}}\int_0^\infty\int\chi_+^{-1/2}(s-\sigma-\tau)G(\tau,\theta,s,0)dsd\tau,
\end{split}
\end{equation}
where $G(\tau,\theta,s,0)=-\int_{\R^2}\delta(s-y\cdot\omega)[\Delta,\chi_{[1/2,1]}]w_{B}(\tau,y)dy.$

At first, we show that $F_0^1(\sigma,\theta)$ is well-defined.
By the homogeneity of $\chi_+^{-1/2}$ and \eqref{loc:decay} with $H=0$, one can achieve
\begin{equation}\label{LWE-err-pf6}
\begin{split}
&|F_0^1(\sigma,\theta)|\ls\int_0^\infty\w{\sigma+\tau}^{-1/2}\w{\tau}^{-1}d\tau\\
&\ls\int_{2|\sigma|}^\infty\w{\tau}^{-3/2}d\tau+\w{\sigma}^{-1}\int_{|\sigma|/2}^{2|\sigma|}\w{\sigma+\tau}^{-1/2}d\tau
+\w{\sigma}^{-1/2}\int_0^{|\sigma|/2}\w{\tau}^{-1}d\tau\\
&\ls\w{\sigma}^{-1/2}\ln(2+|\sigma|),
\end{split}
\end{equation}
which yields the first inequality of \eqref{FRF1-decay} with $j=0$.

Next, we prove the second inequality of \eqref{FRF1-decay} with $i=1,j=0$. Note that
\begin{equation*}
\begin{split}
&|\p_{\sigma}F_0^1(\sigma,\theta)|\ls\Big|\int_0^\infty\int\chi_+^{-3/2}(s-\sigma-\tau)G(\tau,\theta,s,0)dsd\tau\Big|\\
&\ls\int_0^\infty\w{\sigma+\tau}^{-3/2}\w{\tau}^{-1}d\tau\\
&\ls\int_{2|\sigma|}^\infty\w{\tau}^{-5/2}d\tau+\w{\sigma}^{-1}\int_{|\sigma|/2}^{2|\sigma|}\w{\sigma+\tau}^{-3/2}d\tau
+\w{\sigma}^{-3/2}\int_0^{|\sigma|/2}\w{\tau}^{-1}d\tau\\
&\ls\w{\sigma}^{-1}.
\end{split}
\end{equation*}
Since the proofs on the other cases of $i, j$ for \eqref{FRF1-decay} are analogous, we omit them here.

Finally, we prove \eqref{LWE-err1} only for $a=0$ because the other cases can be analogously obtained.
By \eqref{LWE-err-pf4} and \eqref{LWE-err-pf5}, one has
\begin{equation}\label{LWE-err-pf7}
\begin{split}
&|r^{1/2}w^c_1(t,x)-F_0^1(\sigma,\theta)|
\ls\Big|\int_t^\infty\int\chi_+^{-1/2}(s-\sigma-\tau)G(\tau,\theta,s,0)dsd\tau\Big|\\
&+\Big|\int_0^t\int[\chi_+^{-1/2}(s-\sigma-\tau+\frac{z(\sigma+\tau)^2}{2})G(\tau,\theta,s,z)
-\chi_+^{-1/2}(s-\sigma-\tau)G(\tau,\theta,s,0)]dsd\tau\Big|\\
&=I^1_1+I^1_2.
\end{split}
\end{equation}
It follows from the homogeneity of $\chi_+^{-1/2}$ and \eqref{loc:decay} that
\begin{equation*}
I^1_1\ls\int_t^\infty\w{\sigma+\tau}^{-1/2}\w{\tau}^{-1}d\tau.
\end{equation*}
For $t\ge2|\sigma|$, we can see that $\tau\ge2|\sigma|$ and
\begin{equation*}
I^1_1\ls\int_t^\infty\w{\tau}^{-3/2}d\tau\ls\w{t}^{-1/2}.
\end{equation*}
For $t\le2|\sigma|$, one has
\begin{equation*}
\begin{split}
I^1_1&\ls\w{t}^{-1}\int_t^{2|\sigma|}\w{\sigma+\tau}^{-1/2}d\tau
+\int_{2|\sigma|}^\infty\w{\tau}^{-3/2}d\tau\\
&\ls\w{t}^{-1}\w{\sigma}^{1/2}+\w{\sigma}^{-1/2}\ls\w{t}^{-1/2}.
\end{split}
\end{equation*}
Then it holds that by \eqref{LWE-err-pf7}
\begin{equation}\label{LWE-err-pf8}
\begin{split}
&|r^{1/2}w^c_1(t,x)-F_0^1(\sigma,\theta)|\ls\w{t}^{-1/2}\\
&\qquad+\Big|\int_0^{1/r}\int_0^t\int\frac{\p}{\p z}[\chi_+^{-1/2}(s-\sigma-\tau+\frac{z(\sigma+\tau)^2}{2})G(\tau,\theta,s,z)]dsd\tau dz\Big|\\
&\ls\w{t}^{-1/2}+r^{-1}\sup_{0<z\le1/r}\Big|\int_0^t\int\frac{\p}{\p z}[\chi_+^{-1/2}(s-\sigma-\tau+\frac{z(\sigma+\tau)^2}{2})G(\tau,\theta,s,z)]dsd\tau\Big|\\
&\ls\w{t}^{-1/2}+r^{-1}(I^1_{21}+I^1_{22}),
\end{split}
\end{equation}
where
\begin{equation}\label{LWE-err-pf9}
\begin{split}
I^1_{21}&=\sup_{0<z\le1/r}\Big|\int_0^t\int(\sigma+\tau)^2\chi_+^{-3/2}(s-\sigma-\tau+\frac{z(\sigma+\tau)^2}{2})G(\tau,\theta,s,z)d\tau ds\Big|,\\
I^1_{22}&=\sup_{0<z\le1/r}\Big|\int_0^t\int\chi_+^{-1/2}(s-\sigma-\tau+\frac{z(\sigma+\tau)^2}{2})G_z'(\tau,\theta,s,z)d\tau ds\Big|.
\end{split}
\end{equation}
By the homogeneity of $\chi_+^{-3/2}$ and $|\frac{z(\sigma+\tau)^2}{2}-\sigma-\tau|=\frac{z(2/z-r+t-\tau)}{2}|\sigma+\tau|\ge\frac{|\sigma+\tau|}{2}$ with $z\in(0,1/r]$, we arrive at
\begin{equation}\label{LWE-err-pf10}
I^1_{21}\ls\int_0^t\sqrt{|\sigma+\tau|}\w{\tau}^{-1}d\tau=\int_0^t\sqrt{|r-t+\tau|}\w{\tau}^{-1}d\tau\ls\sqrt{r}\ln(2+t),
\end{equation}
where \eqref{loc:decay} has been used.
Analogously, one can achieve
\begin{equation}\label{LWE-err-pf11}
I^1_{22}\ls\int_0^t\w{\sigma+\tau}^{-1/2}\w{\tau}^{-1}d\tau\ls\w{\sigma}^{-1/2}\ln(2+t+|\sigma|).
\end{equation}
Collecting \eqref{LWE-err-pf8}-\eqref{LWE-err-pf11} yields the first inequality in \eqref{LWE-err1} with $a=0$.
Next, we prove the second inequality in \eqref{LWE-err1} with $a=0$.
Analogously to \eqref{LWE-err-pf7}, we have
\begin{equation}\label{LWE-err-pf12}
\begin{split}
&|\p_t(r^{1/2}w^c_1(t,x)-F_0^1(\sigma,\theta))|\ls I^2_1+I^2_2+I^2_3\quad\text{with}\\
&I^2_1=\Big|\int\chi_+^{-1/2}(s-\sigma-t+\frac{z(\sigma+t)^2}{2})G(t,\theta,s,z)ds\Big|\\
&\qquad+\Big|\int\chi_+^{-1/2}(s-\sigma-t)G(t,\theta,s,0)ds\Big|,\\
&I^2_2=\Big|\int_t^\infty\int\chi_+^{-3/2}(s-\sigma-\tau)G(\tau,\theta,s,0)dsd\tau\Big|,\\
&I^2_3=\Big|\int_0^t\int z(\sigma+\tau)\chi_+^{-3/2}(s-\sigma-\tau+\frac{z(\sigma+\tau)^2}{2})G(\tau,\theta,s,z)dsd\tau\Big|\\
&\qquad+\Big|\int_0^t\int[\chi_+^{-3/2}(s-\sigma-\tau+\frac{z(\sigma+\tau)^2}{2})G(\tau,\theta,s,z)\\
&\qquad\quad-\chi_+^{-3/2}(s-\sigma-\tau)G(\tau,\theta,s,0)]dsd\tau\Big|.
\end{split}
\end{equation}
Similarly to \eqref{LWE-err-pf8}-\eqref{LWE-err-pf11}, one arrives at
\begin{equation}\label{LWE-err-pf13}
\begin{split}
&I^2_1\ls\Big|\int\chi_+^{-1/2}(s-r/2)G(t,\theta,s,z)ds\Big|+\Big|\int\chi_+^{-1/2}(s-r)G(t,\theta,s,0)ds\Big|\ls\w{t}^{-1}\w{r}^{-1/2},\\
&I^2_2\ls\int_t^\infty\w{\sigma+\tau}^{-3/2}\w{\tau}^{-1}d\tau\ls\w{t}^{-1}\int_t^\infty\w{r-t+\tau}^{-3/2}d\tau\ls\w{t}^{-1}\w{r}^{-1/2}
\end{split}
\end{equation}
and
\begin{equation}\label{LWE-err-pf14}
\begin{split}
I^2_3&\ls r^{-1}\sup_{0<z\le 1/r}\Big|\int_0^t\int\w{\sigma+\tau}^2\chi_+^{-5/2}(s-\sigma-\tau+\frac{z(\sigma+\tau)^2}{2})G(\tau,\theta,s,z)dsd\tau\Big|\\
&\quad+r^{-1}\int_0^t\w{\sigma+\tau}^{-1/2}\w{\tau}^{-1}d\tau\\
&\ls r^{-1}\int_0^t\w{\sigma+\tau}^{-1/2}\w{\tau}^{-1}d\tau\ls r^{-1}\w{\sigma}^{-1/2}\ln(2+t+|\sigma|).
\end{split}
\end{equation}
Due to
\begin{equation*}
\p_1=\frac{x_1}{r}\p_r-\frac{x_2}{r^2}\p_{\theta},\quad \p_2=\frac{x_2}{r}\p_r+\frac{x_1}{r^2}\p_{\theta}\quad\text{and}\quad\Omega=\p_{\theta},
\end{equation*}
the estimate of $\p_x(r^{1/2}w^c_1(t,x)-F_0^1(\sigma,\theta))$ can be reduced to the treatment on $\p_r(r^{1/2}w^c_1(t,x)-F_0^1(\sigma,\theta))$.
In this case,  one needs to deal with such an extra term
\begin{equation}\label{LWE-err-pf15}
\begin{split}
I^2_4&\ls r^{-2}\Big|\int_0^t\int\w{\sigma+\tau}^2\chi_+^{-3/2}(s-\sigma-\tau+\frac{z(\sigma+\tau)^2}{2})G(\tau,\theta,s,z)dsd\tau\Big|\\
&\ls r^{-2}\int_0^t\w{\sigma+\tau}^{1/2}\w{\tau}^{-1}d\tau\ls r^{-3/2}\ln(2+t).
\end{split}
\end{equation}
Substituting \eqref{LWE-err-pf13}-\eqref{LWE-err-pf15} into \eqref{LWE-err-pf12} yields the second inequality in \eqref{LWE-err1} with $a=0$.
\end{proof}

\begin{proof}[Proof of Proposition \ref{prop31}]
According to the definitions \eqref{prop31:pf1} and \eqref{prop31:pf2}, we have
$w_{B}(t,x)=w^c=w^c_1+w^c_2$ in the region $|x|\ge1$.
Note that $w^c_2$ can be treated more easily than $w^c_1$,
then there exists a smooth function $F_0^2(\sigma,\theta)$ such that
$w_2^c-r^{-\f12}F_0^2(\sigma,\theta)$ and $F_0^2(\sigma,\theta)$ admit
the analogous estimates \eqref{LWE-err1} and \eqref{FRF1-decay},  respectively.
Let $F_0(\sigma,\theta)=F_0^1(\sigma,\theta)+F_0^2(\sigma,\theta)$.
Then the proof of Proposition \ref{prop31} can be completed.
\end{proof}

At last, we give some estimates of the solution  $w_{B}$ to \eqref{LWE}.
\begin{lemma}
Let $w_{B}$ be the solution to the homogeneous linear IBVP \eqref{LWE}. Then one has
\begin{equation}\label{LWE-decay}
\begin{split}
|\p Z^aw_{B}(t,x)|&\ls\w{x}^{-1/2}\w{t-|x|}^{-1},\\
|Z^aw_{B}(t,x)|&\ls\w{t+|x|}^{-1/2}\w{t-|x|}^{-1/2}\ln^2(2+t+|x|).
\end{split}
\end{equation}
\end{lemma}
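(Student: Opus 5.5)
The plan is the same cut-off decomposition used above for $F_0$, now feeding in the Cauchy-problem pointwise bounds of Section \ref{YH-8}. Because the commutator term $[\Delta,\chi_{[1/2,1]}]w_B$ is compactly supported, it is controlled by local energy decay.

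\textbf{Near the obstacle.} First we treat the region $|x|\le 2$. The fields $Z=\{\p_t,\p_1,\p_2,\Omega\}$ act on $w_B$ as follows. Time derivatives $\p_t^k w_B$ solve \eqref{LWE} with data in $H^m$, since the compatibility conditions hold. Spatial derivatives and $\Omega$ near $\p\cK$ are recovered from $\p_t^k w_B$ through the elliptic estimate \eqref{ellip}, using $\Delta w_B=\p_t^2 w_B$. Then Lemma \ref{lem:loc:decay} with $H=0$ gives $\w{t}\|\p^a w_B\|_{L^2(\cK_R)}\ls 1$ for all $|a|\le m$. Sobolev embedding on $\cK_R$ then yields $|\p^{\le 1} Z^a w_B|\ls \w{t}^{-1}$ for $|x|\le 2$. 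This already implies both bounds of \eqref{LWE-decay} in this region, because there $\w{x}\sim1$ and $\w{t-|x|}\sim\w{t}$.

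\textbf{Away from the obstacle.} For $|x|\ge1$ we write $w_B=w_1^c+w_2^c$ as in \eqref{prop31:pf2}. Since $\Omega$ and $\p$ commute with $\Box$ and with the cut-off up to compactly supported terms, $Z^a w_j^c$ solves the same type of Cauchy problem on $\R^2$. The pieces are handled as follows.

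For $w_2^c$, which has compactly supported smooth data, \eqref{dpw:ivp} with $F=0$ gives $\w{x}^{1/2}\w{t-|x|}^{1+\mu}|\p Z^a w_2^c|\ls 1$. Estimate \eqref{pw:ivp} gives $|Z^a w_2^c|\ls \w{t+|x|}^{-1/2}\w{t-|x|}^{-1/2}$.

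For $w_1^c$, the forcing is $H=-[\Delta,\chi_{[1/2,1]}]Z^a w_B$, which is supported in $\{|y|\le1\}$. By the local decay just obtained, $\w{s}|(1-\Delta)^3 H(s,y)|\ls 1$, where we take $m$ large (this uses $N\ge19$). Lemma \ref{lem:pw:compact} then applies:
\begin{itemize}
\item \eqref{dpw:compact} gives $\w{x}^{1/2}\w{t-|x|}|\p Z^a w_1^c|\ls1$;
\item \eqref{pw:compact} with $\mu=1/2$ gives $\w{t+|x|}^{1/2}\w{t-|x|}^{1/2}|Z^aw_1^c|\ls \ln^2(2+t+|x|)\sup_s\w{s}|(1-\Delta)H|\ls \ln^2(2+t+|x|)$.
\end{itemize}
Summing the two pieces gives \eqref{LWE-decay}. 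The logarithmic loss in the second line of \eqref{LWE-decay} comes precisely from the $w_1^c$ piece.

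\textbf{Main obstacle.} The delicate step is commuting $\Omega$ and $\p_x$ with the boundary condition. Only $\p_t$ preserves $\cB w=0$, so $Z^a w_B$ does not itself solve \eqref{LWE}. I would handle this by applying Lemma \ref{lem:loc:decay} only to $\p_t^k w_B$. All other derivatives on $\cK_R$ would then come from \eqref{ellip}, applied iteratively to bound $\|\p_x^b\p_t^k w_B\|_{L^2(\cK_R)}$ by $\sum_{j}\|\p_t^{j} w_B\|_{H^1(\cK_{R+1})}$. Away from the boundary, $Z$ commutes with $\chi_{[1/2,1]}\Box$ up to commutator terms supported in $\{1/2\le|x|\le1\}$, and those terms are controlled by the same local decay bound.
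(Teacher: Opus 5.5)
Your proof is essentially the paper's. Near the obstacle you use local energy decay \eqref{loc:decay} with Sobolev embedding. Away from it you take a cut-off of $Z^aw_B$ solving a Cauchy problem on $\R^2$ with compactly supported commutator forcing, handled by Lemma \ref{lem:pw:compact} (the source of the $\ln^2$), and you treat the compactly supported data with \eqref{dpw:ivp} and \eqref{pw:ivp}; the paper does the same with $\chi_{[1,2]}$ in $|x|\ge2$.

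One bookkeeping slip, harmless for the bounds: the forcing $-[\Delta,\chi_{[1/2,1]}]Z^aw_B$ you write down belongs to $\chi_{[1/2,1]}Z^aw_B$, not to $Z^aw_1^c$. The latter has forcing $-Z^a([\Delta,\chi_{[1/2,1]}]w_B)$ and in general nonzero Cauchy data once $Z^a$ contains $\p_t$, so it is cleaner to cut off $Z^aw_B$ directly, as the paper does.
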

\begin{proof}
The estimates \eqref{LWE-decay} in the region $x\in\cK\cap\{x: |x|\le2\}$ can be derived from \eqref{loc:decay} with $H=0$ and the Sobolev embedding.
To obtain the estimates in the region $x\in\cK\cap\{x: |x|\ge2\}$, let $w_{B}^a=\chi_{[1,2]}(x)Z^aw_{B}$ be a function on $\R^2$, which verifies the Cauchy problem $\Box w_{B}^a=[-\Delta,\chi_{[1,2]}]Z^aw_{B}$.
Since $\Box w_{B}^a$ is supported in $|x|\le2$, applying \eqref{pw:compact} and \eqref{dpw:compact} to $\Box w_{B}^a$ with \eqref{loc:decay} implies that
\begin{equation*}
\begin{split}
|\p(\chi_{[1,2]}(x)Z^aw_{B})|&\ls\w{x}^{-1/2}\w{t-|x|}^{-1},\\
|\chi_{[1,2]}(x)Z^aw_{B}|&\ls\w{t+|x|}^{-1/2}\w{t-|x|}^{-1/2}\ln^2(2+t+|x|).
\end{split}
\end{equation*}
This, together with the facts of $\chi_{[1,2]}(x)Z^aw_{B}=Z^aw_{B}$ and $\p(\chi_{[1,2]}(x)Z^aw_{B})=\p Z^aw_{B}$ in the region $x\in\cK\cap\{x: |x|\ge2\}$, yields \eqref{LWE-decay}.
\end{proof}

\section{Approximate solutions}\label{sect4}

\subsection{Construction of the approximate solution}

Let $U(\tau,\sigma,\theta)$ be the solution to
\begin{equation}\label{profile-eqn}
\left\{
\begin{aligned}
&\p^2_{\tau\sigma}U-Q_{1st}(\tilde\omega)\p_{\sigma}U\p^2_{\sigma}U=0,\\
&U(0,\sigma,\theta)=F_0(\sigma,\theta),\\
&\supp_{\sigma} U\subset \{\sigma: \sigma\le M_0\},
\end{aligned}
\right.
\end{equation}
where $Q_{1st}(\tilde\omega)$ and $F_0$ are given by \eqref{nonlinear} and \eqref{def:F0}, respectively.
Set $x=r(\cos\theta,\sin\theta)$, $\sigma=r-t$, $\tau=\ve\sqrt{1+t}$ and
\begin{equation}\label{app-solution}
u_{ap}(t,x)=\ve\chi(\ve t)w_{B}(t,x)+\ve(1-\chi(\ve t))\chi(-3\ve\sigma)r^{-1/2}U(\tau,\sigma,\theta),
\end{equation}
where $\chi=1-\chi_{[1,2]}$, $w_{B}$ is the solution to the homogeneous linear IBVP \eqref{LWE}.
From \eqref{LWE}, $\cB u_{ap}|_{[0,\infty)\times \p\cK}=0$ obviously holds.

\begin{proposition}
The initial value problem \eqref{profile-eqn} admits a unique smooth solution $U \in C^{\infty}([0, \tau_0)\times\mathbb{R}\times[0, 2\pi])$, 
where $\tau_0$ is given by \eqref{thm1:lifespan}.
\end{proposition}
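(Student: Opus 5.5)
The plan is to reduce \eqref{profile-eqn} to an inviscid Burgers equation along each ray, with $\theta$ entering only as a parameter. Set $V=\p_\sigma U$. Differentiating the data gives $V(0,\sigma,\theta)=\p_\sigma F_0(\sigma,\theta)$, and the equation becomes
\begin{equation*}
\p_\tau V-\frac12 Q_{1st}(\tilde\omega)\p_\sigma(V^2)=0,\qquad\text{that is,}\qquad \p_\tau V-Q_{1st}(\tilde\omega)V\p_\sigma V=0.
\end{equation*}
Since $F_0$ is supported in $\{\sigma\le M_0\}$ (Proposition \ref{prop31} together with finite propagation speed), we recover $U$ by
\begin{equation*}
U(\tau,\sigma,\theta)=-\int_\sigma^{M_0}V(\tau,s,\theta)\,ds .
\end{equation*}
Uniqueness of $U$ under the support condition then follows from uniqueness of smooth solutions of the Burgers equation.

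The Burgers equation is solved by characteristics. For fixed $\theta$ and $\sigma_0\in\R$, the line $\sigma=\sigma_0-\tau Q_{1st}(\tilde\omega)\p_\sigma F_0(\sigma_0,\theta)$ carries the constant value $V=\p_\sigma F_0(\sigma_0,\theta)$. Its Jacobian is
\begin{equation*}
\frac{\p\sigma}{\p\sigma_0}=1-\tau\,Q_{1st}(\tilde\omega)\p_\sigma^2F_0(\sigma_0,\theta).
\end{equation*}
For $\tau<\tau_0$ this is bounded below by $1-\tau/\tau_0>0$, uniformly in $(\sigma_0,\theta)$, because $\tau_0^{-1}=\max Q_{1st}\p_\sigma^2F_0$. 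Positivity of $\tau_0$ is Remark \ref{rmk:tau}. The supremum is attained since $\p_\sigma^2F_0$ is continuous, decays like $\w{\sigma}^{-1}$ by \eqref{FRF-decay}, and vanishes for $\sigma>M_0$.

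It follows that $\sigma_0\mapsto\sigma$ is a smooth, strictly increasing map. As $|\sigma_0|\to\infty$ it tends to $\pm\infty$, because $\p_\sigma F_0$ is bounded; hence it is a bijection of $\R$. Its inverse is smooth in $(\tau,\sigma,\theta)$ by the inverse function theorem. Therefore $V$ is smooth on $[0,\tau_0)\times\R\times[0,2\pi]$, and $2\pi$-periodicity in $\theta$ is inherited from the data.

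Next I check the support condition. For $\sigma_0>M_0$ we have $\p_\sigma F_0=0$, so these characteristics are vertical lines and $V=0$ there. Since the map is monotone, $V=0$ for all $\sigma>M_0$. This gives $\supp_\sigma U\subset\{\sigma\le M_0\}$ with the integral formula above. Differentiating that formula in $\tau$ and using the Burgers equation yields
\begin{equation*}
\p_\tau U=\tfrac12Q_{1st}V^2 .
\end{equation*}
Then $\p_\sigma$ of this gives exactly \eqref{profile-eqn}, and $U(0)=F_0$.

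The main point needing care is whether the integral defining $U$ converges as $\sigma\to-\infty$. This does not affect smoothness, since $\sigma$ stays above a bounded lower limit in that integral. I expect it to be needed only for the bounds used later, where the decay $|\p_\sigma F_0|\ls\w{\sigma}^{-1}$ transported along characteristics (whose shift is bounded for bounded $\tau$) gives control. The only genuinely delicate step is the uniform lower bound on the Jacobian, which is exactly how $\tau_0$ is defined.
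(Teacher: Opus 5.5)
Your proposal is correct and follows the same route as the paper: set $V=\partial_\sigma U$ to get the Burgers equation, solve along the straight characteristics $\sigma=s-\tau Q_{1st}(\tilde\omega)\partial_\sigma F_0(s,\theta)$, whose Jacobian $1-\tau Q_{1st}(\tilde\omega)\partial_\sigma^2F_0$ stays positive for $\tau<\tau_0$, invert, and recover $U$ by integrating from $M_0$. Beyond the paper's terse appeal to the implicit function theorem, you also spell out global invertibility (monotonicity plus properness), preservation of the support $\{\sigma\le M_0\}$, the check $\partial_\tau U=\tfrac12 Q_{1st}(\tilde\omega)V^2$, and uniqueness, all of which the paper leaves implicit.
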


\begin{remark}\label{rmk:tau}
Note that \(0<\tau_0<+\infty\) holds whenever \((u_0^1, u_1^1)\not\equiv 0\).
Indeed, define
\begin{equation*}
\cF(\sigma, \theta)=Q_{1st}(\tilde\omega)F_0(\sigma, \theta).
\end{equation*}
Due to $Q_{1st}(\tilde\omega)\not\equiv 0$, it follows from Proposition \ref{prop31} that
\[
\cF\not\equiv 0,\quad \supp_{\sigma}\cF\subset\{\sigma: \sigma\leq M_0\}\quad\text{and}\quad
\lim_{\sigma\rightarrow-\infty}\cF(\sigma, \theta)=0.
\]
Consequently, there exists a point \((\sigma_0, \theta_0)\in\mathbb R\times [0,2\pi]\) 
such that \(\partial_\sigma^2\mathcal F(\sigma_0, \theta_0)>0\).
This implies
\begin{equation*}
Q_{1st}(\tilde\omega_0)\partial_\sigma^2F_0(\sigma_0, \theta_0)>0
\end{equation*}
with \(\tilde\omega_0=(-1, \cos\theta_0, \sin\theta_0)\).
\end{remark}

\begin{proof}
Let $V(\tau, \sigma, \theta)=\partial_\sigma U(\tau, \sigma, \theta)$. Then we have that from \eqref{profile-eqn},
\begin{equation}\label{def:tau:pf1}
\begin{cases}
\ds \p_{\tau}V- Q_{1st}(\tilde\omega)V\p_\sigma V=0,\quad\tau>0, \ \sigma\in\R,\ \theta\in[0, 2\pi],\\
V(0,\sigma,\theta)=\partial_\sigma F_0(\sigma,\theta).
\end{cases}
\end{equation}
The characteristics $\sigma=\sigma(s, \tau, \theta)$ of \eqref{def:tau:pf1} issuing from $(s, 0, \theta)$ is given by
\begin{equation*}
\begin{cases}
\ds \frac{\partial \sigma}{\partial \tau}(s, \tau, \theta)
=-Q_{1st}(\tilde\omega) V(\tau, \sigma(s, \tau, \theta), \theta),\\
\sigma(s, 0, \theta)=s.
\end{cases}
\end{equation*}
Along such a characteristics, it follows from \eqref{def:tau:pf1} that
\[
V(\tau, \sigma(s, \tau, \theta), \theta)=\partial_\sigma F_0(s, \theta)=\partial_\sigma U(\tau, \sigma(s, \tau, \theta), \theta),
\]
which implies
$\sigma(s, \tau, \theta)=s-Q_{1st}(\tilde\omega)\partial_\sigma F_0(s,\theta)\tau$
and $\frac{\partial\sigma}{\partial s}(s, \tau, \theta)=1-Q_{1st}(\tilde\omega)\partial_\sigma^2 F_0(s,\theta)\tau$.
By the implicit function theorem, there exists a smooth function $s=s(\tau, \sigma, \theta)$ for $\tau<\tau_0$ such that
$\partial_\sigma U(\tau, \sigma, \theta)=\partial_\sigma F_0(s(\tau, \sigma, \theta), \theta)$.

Since $U(\tau, \sigma, \theta)\equiv 0$ holds when $\sigma>M_0$, we conclude that for $\tau<\tau_0$,
\begin{equation*}
U(\tau, \sigma, \theta)=\int_{M_0}^\sigma\partial_\sigma F_0(s(\tau, \tilde\sigma, \theta),\theta)d\tilde\sigma.
\end{equation*}
\end{proof}

\begin{corollary}
    For any positive constant $B<\tau_0$, if $t\ge\frac{1}{\ve}$ and $r\ge t/3$, then
\begin{equation}\label{Uestimate}
\begin{aligned}
    &|Z^a\partial_\sigma U(\tau, \sigma, \theta)|\le C_{B}\w{\sigma}^{-1},\quad
    |Z^a\partial_{\sigma\tau}^2 U(\tau, \sigma, \theta)|\le C_B\w{\sigma}^{-2}, \\[4pt]
    &|Z^aU(\tau, \sigma, \theta)|\le C_{B}\ln(2+|\sigma|),\quad
    |Z^a\partial_\tau U|\le C_{B},
\end{aligned}
\end{equation}
    where $C_B$ is a generic positive constant depending only on $B$.
\end{corollary}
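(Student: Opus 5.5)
The estimates in \eqref{Uestimate} follow from the explicit characteristic representation obtained in the preceding proof. Set $q(\theta)=Q_{1st}(\tilde\omega)$ and fix $B<\tau_0$. For $\tau\le B$ the map $s\mapsto\sigma(s,\tau,\theta)=s-q(\theta)\p_\sigma F_0(s,\theta)\tau$ satisfies
\[
\p_s\sigma=1-q\,\p_\sigma^2F_0\,\tau\ge 1-B/\tau_0=:\delta_B>0 ,
\]
because $q\,\p_\sigma^2F_0\le\tau_0^{-1}$ by the definition of $\tau_0$ in \eqref{thm1:lifespan}. It is also bounded above by a constant, since $|\p_\sigma^2F_0|\ls\w{\sigma}^{-1}$ by \eqref{FRF-decay}. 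Hence the inverse $s=s(\tau,\sigma,\theta)$ has $\p_\sigma s\in[C_B^{-1},\delta_B^{-1}]$. Moreover $|\sigma-s|\le B|q||\p_\sigma F_0(s)|\ls\w{s}^{-1}$, so $\w{s}\sim\w{\sigma}$ uniformly. The hypotheses $t\ge1/\ve$ and $r\ge t/3$ only serve to keep us in the region where $\tau\in[\sqrt{\ve},B]$, which is covered by the above, and where $Z=\Omega=\p_\theta$ and $\p_t,\p_r$ act on $(\tau,\sigma,\theta)$ with bounded coefficients. In this region $\p_t\tau=\ve/(2\sqrt{1+t})\ls1$.

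\textbf{Step 1.} For $\p_\sigma U$ we have $\p_\sigma U=\p_\sigma F_0(s(\tau,\sigma,\theta),\theta)$. Differentiating this identity in $\sigma$, $\theta$ and $\tau$ by the chain rule produces derivatives of $s$. These come from implicit differentiation: $\p_\tau s=q\p_\sigma F_0(s)/\p_s\sigma$ and $\p_\theta s=(\p_\theta q\,\p_\sigma F_0+q\,\p_\theta\p_\sigma F_0)\tau/\p_s\sigma$, each of which is $O(\w{\sigma}^{-1})$. Higher derivatives of $s$ are bounded polynomially in such quantities, with powers of $(\p_s\sigma)^{-1}\le\delta_B^{-1}$. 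Using \eqref{FRF-decay} (and the analogous bounds for higher $\sigma,\theta$ derivatives that Proposition \ref{prop31} provides), each term of $Z^a\p_\sigma U$ contains at least one factor $\p_\sigma^{i}\p_\theta^jF_0(s)$ with $i\ge1$. That factor is $\ls\w{s}^{-1}\sim\w{\sigma}^{-1}$, and all other factors are bounded, which gives $|Z^a\p_\sigma U|\le C_B\w{\sigma}^{-1}$.

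\textbf{Step 2.} For $\p^2_{\sigma\tau}U$, the equation \eqref{profile-eqn} gives $\p^2_{\tau\sigma}U=q\,\p_\sigma U\,\p_\sigma^2U$. Both factors are $O(\w{\sigma}^{-1})$ by Step 1, and so is their $Z^a$-derivative by the Leibniz rule, so the product is $O(\w{\sigma}^{-2})$.

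\textbf{Step 3.} For $U$ and $\p_\tau U$, integrate from the support edge: $U(\tau,\sigma,\theta)=-\int_\sigma^{M_0}\p_\sigma U(\tau,\tilde\sigma,\theta)\,d\tilde\sigma$. The bound $\w{\tilde\sigma}^{-1}$ integrates to $\ln(2+|\sigma|)$. The same holds after applying $\p_\theta^j$ and $\p_\tau$ under the integral; the endpoint $M_0$ is fixed, and $\p_\sigma$ derivatives fall back to Step 1. Similarly
\[
\p_\tau U=-\int_\sigma^{M_0}\p^2_{\sigma\tau}U\,d\tilde\sigma ,
\]
and by Step 2 the integrand is $\ls\w{\tilde\sigma}^{-2}$, which is integrable. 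This gives a bound $C_B$ uniform in $\sigma$.

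The main obstacle is bookkeeping the higher-order chain rule for $Z^a$ applied to $F_0(s(\tau,\sigma,\theta),\theta)$ while preserving the $\w{\sigma}^{-1}$ gain. This requires the equivalence $\w{s}\sim\w{\sigma}$ and the observation that every term retains a $\sigma$-derivative on $F_0$. I would handle it by induction on $|a|$, with the explicit formulas for $\p s$ as the base case.
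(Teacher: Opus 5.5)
Your argument is correct and follows the route the paper intends. The paper states this corollary without a separate proof, as a direct consequence of two ingredients: the characteristic representation from the preceding proposition, namely $\p_\sigma U=\p_\sigma F_0(s(\tau,\sigma,\theta),\theta)$ and $U=\int_{M_0}^\sigma\p_\sigma U\,d\tilde\sigma$, and the radiation-field bounds \eqref{FRF-decay}. Your lower bound $\p_s\sigma\ge 1-B/\tau_0$, the equivalence $\w{s}\sim\w{\sigma}$, and the integration from the support edge $\sigma=M_0$ supply exactly the details the paper leaves implicit.

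One small correction: the restriction $\tau\le B$ does not follow from $t\ge 1/\ve$ and $r\ge t/3$. Those hypotheses only give $\tau\ge\sqrt{\ve}$ and keep the coefficients of $Z$ in $(\tau,\sigma,\theta)$ coordinates bounded. The bound $\tau\le B$ comes from the implicit standing assumption $t\le B^2/\ve^2-1$, under which the corollary is used.
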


\subsection{Estimates of the approximate solution}

Let $u$ be the solution of \eqref{QWE} and $u_{ap}$ be defined by \eqref{app-solution}.
Denote the error solution
\begin{equation}\label{error-solution}
v(t,x)=u(t,x)-u_{ap}(t,x).
\end{equation}
Then $v$ satisfies $\cB v|_{[0,\infty)\times\p\cK}=0$, $v(0,x)=O(\ve^2)$ and
\begin{equation}\label{error-eqn}
\begin{split}
&\Box v=-\sum_{\alpha,\beta,\mu=0}^2Q^{\alpha\beta\mu}\Big\{\p^2_{\alpha\beta}v(\p_{\mu}v+\p_{\mu}u_{ap})
+\p^2_{\alpha\beta}u_{ap}\p_{\mu}v\Big\}+R_{ap},\\
&R_{ap}=-\Box u_{ap}-\sum_{\alpha,\beta,\mu=0}^2Q^{\alpha\beta\mu}\p^2_{\alpha\beta}u_{ap}\p_{\mu}u_{ap}.
\end{split}
\end{equation}

\begin{lemma}
Under the assumptions of Theorem \ref{thm1}, for any positive constant $B<\tau_0$ and $t\le\frac{B^2}{\ve^2}-1$, one has
\begin{equation}\label{app-pw}
\begin{split}
|Z^au_{ap}(t,x)|&\ls\ve\w{t+|x|}^{\tilde\ve-1/2}\w{t-|x|}^{-1/2},\quad t\le\frac{2}{\ve},\\
|\p Z^au_{ap}(t,x)|&\ls\ve\w{x}^{-1/2}\w{t-|x|}^{-1},
\end{split}
\end{equation}
where and below $\tilde\ve=10^{-3}$.
\end{lemma}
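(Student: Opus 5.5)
We split $u_{ap}$ according to the cutoffs in \eqref{app-solution}: the linear part $\ve\chi(\ve t)w_{B}$, supported in $t\le 2/\ve$, and the profile part $\ve(1-\chi(\ve t))\chi(-3\ve\sigma)r^{-1/2}U(\tau,\sigma,\theta)$, supported in $t\ge1/\ve$ and $\sigma\ge -2/(3\ve)$. In particular $r\ge t-2/(3\ve)\ge t/3$ there, since $t\ge 1/\ve$. The first estimate of \eqref{app-pw} is only claimed for $t\le 2/\ve$. The second is needed for all $t\le B^2/\ve^2-1$, which keeps $\tau=\ve\sqrt{1+t}\le B<\tau_0$, so Corollary \eqref{Uestimate} applies.

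\textbf{Linear part.} For $t\le 2/\ve$, apply $Z^a$ to $\ve\chi(\ve t)w_B$. Whenever $\p_t$ hits $\chi(\ve t)$ it produces a harmless factor $\ve^k\chi^{(k)}$, and $\Omega$ and $\p_x$ commute with the cutoff. Hence \eqref{LWE-decay} gives
\[
|Z^a(\ve\chi w_B)|\ls\ve\w{t+|x|}^{-1/2}\w{t-|x|}^{-1/2}\ln^2(2+t+|x|),
\]
and we absorb the logarithm into $\w{t+|x|}^{\tilde\ve}$. Likewise $|\p Z^a(\ve\chi w_B)|\ls\ve\w{x}^{-1/2}\w{t-|x|}^{-1}$. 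The extra term $\ve^2\chi'(\ve t)Z^aw_B$, which appears when $\p_t$ falls on the cutoff in the derivative bound, lives on $t\in[1/\ve,2/\ve]$. There $\ve\ls\w{t}^{-1}$, and since $w_B$ vanishes for $|x|\ge t+M_0$ we have $\w{t-|x|}\ls\w{t}$ on its support, so this term is again $\ls\ve\w{x}^{-1/2}\w{t-|x|}^{-1}$ (the logarithm is absorbed by the power $\w{t+|x|}^{-1/2}$).

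\textbf{Profile part.} This part only matters for the derivative bound, on $t\ge1/\ve$ and $r\ge t/3$, where $\w{x}\sim\w{t}$ and $\tau\le B$. We use $\p_t=\p_\tau\cdot\tfrac{\ve}{2\sqrt{1+t}}-\p_\sigma$, $\p_r=\p_\sigma$ in the variables $(\tau,\sigma,\theta)$, and $\Omega=\p_\theta$, $\p_x=\omega\p_r+r^{-1}\omega^\perp\p_\theta$. The main term is $\ve r^{-1/2}\p_\sigma Z^aU$, which by \eqref{Uestimate} is $\ls\ve\w{x}^{-1/2}\w{\sigma}^{-1}$, and $\w{\sigma}=\w{t-|x|}$. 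The other contributions are bounded as follows.
\begin{itemize}
\item Derivatives of $r^{-1/2}$ contribute $\ve r^{-3/2}\ln(2+|\sigma|)$. Since $|\sigma|\ls r$ on the support and $r\ge t/3$, this is $\ls\ve r^{-1/2}\w{\sigma}^{-1}$.
\item $\p_\tau$-terms are bounded by $\ve\cdot\ve t^{-1/2}r^{-1/2}|\p_\tau U|$. Because $|\sigma|\le 2/(3\ve)$ on the support, $\ve\ls\w{\sigma}^{-1}$, so these are also $\ls\ve r^{-1/2}\w{\sigma}^{-1}$.
\item Derivatives of the cutoffs produce factors $\ve\ls\w{\sigma}^{-1}$ multiplying $\ve r^{-1/2}|U|\ls\ve r^{-1/2}\ln(2+|\sigma|)$. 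Here one must be careful: we use that $\chi'(-3\ve\sigma)$ is supported where $|\sigma|\sim1/\ve$, so there $\ve\ln(2+|\sigma|)\ls\w{\sigma}^{-1}\ln(1/\ve)$.
\end{itemize}

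\textbf{Main obstacle.} The step most likely to cause trouble is the cutoff-derivative terms, which carry these logarithmic losses. The first thing we would try is to invoke the sharper decay of $U$ itself at $\sigma\to-\infty$, namely $|U|\ls\w{\sigma}^{-1/2}\ln$ inherited from \eqref{FRF-decay} along the characteristics: for $\sigma\ll0$, $\p_\sigma F_0$ is small, so the characteristics barely move. Alternatively we would absorb the logarithm into the implicit constant $C_B$, noting that $\ve\w{\sigma}\sim1$ on that support, so that $\ve\ln(2+|\sigma|)$ is compared with $\w{\sigma}^{-1}$ up to logarithms. We also need the higher $Z^a$ bounds of \eqref{Uestimate} for mixed $\p_\theta$ derivatives, which follow from the explicit characteristic formula. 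Combining the two parts yields \eqref{app-pw}.
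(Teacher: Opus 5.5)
There is a genuine gap in the first, undifferentiated estimate. Your handling of $\ve\chi(\ve t)w_B$ is fine. The problem is that you set the profile part aside with ``this part only matters for the derivative bound''. But $1-\chi(\ve t)=\chi_{[1,2]}(\ve t)$ is nonzero for $1/\ve<t\le 2/\ve$, which is inside the range where the first line of \eqref{app-pw} is claimed.

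On that window the profile term $\ve(1-\chi(\ve t))\chi(-3\ve\sigma)r^{-1/2}U$ lives on $-2/(3\ve)\le\sigma\le M_0$. The bound available from \eqref{Uestimate}, $|Z^aU|\le C_B\ln(2+|\sigma|)$, only yields $\ve r^{-1/2}\ln(2+|\sigma|)$. Where $|\sigma|\sim r\sim 1/\ve$, the target $\ve\w{t+|x|}^{\tilde\ve-1/2}\w{t-|x|}^{-1/2}$ has size $\ve r^{-1/2}\ve^{1/2-\tilde\ve}$, so you are off by roughly $\ve^{\tilde\ve-1/2}\ln(1/\ve)$. The missing ingredient is the $\w{\sigma}^{-1/2}$ decay of the profile at $\tau\sim\sqrt\ve$. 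The paper obtains it by writing, for $1/\ve\le t\le 2/\ve$,
\[
u_{ap}=\ve w_B\big[\chi(\ve t)+(1-\chi(\ve t))\chi(-3\ve\sigma)\big]+\ve(1-\chi(\ve t))\chi(-3\ve\sigma)\big[(r^{-1/2}F_0-w_B)+r^{-1/2}(U-F_0)\big].
\]
It then bounds the first piece by \eqref{LWE-decay} and $r^{-1/2}F_0-w_B$ by \eqref{LWE-error}. Finally it bounds $U-F_0=\int_0^\tau\p_\tau U\,ds$ by $\tau\ls\sqrt\ve\ls\w{t-|x|}^{-1/2}$, the last step holding because $|t-|x||\ls 1/\ve$ on the support.

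For the derivative bound your scheme matches the paper's, which for $t\ge 2/\ve$ simply appeals to \eqref{Uestimate}. The logarithmic losses you flag are genuine. They also affect your $\p(r^{-1/2})$ bullet: $\ve r^{-3/2}\ln(2+|\sigma|)\ls\ve r^{-1/2}\w{\sigma}^{-1}$ fails by $\ln(1/\ve)$ when $|\sigma|\sim r\sim 1/\ve$, i.e.\ for $t$ of order $1/\ve$. Your fallback of absorbing $\ln(1/\ve)$ into $C_B$ is not allowed, because the implicit constants must be independent of $\ve$.

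Your first option is the right one, and it is short. Integrating \eqref{profile-eqn} in $\sigma$ from $M_0$, where $U$ and its derivatives vanish, gives $\p_\tau U=\tfrac12Q_{1st}(\tilde\omega)(\p_\sigma U)^2$. Together with \eqref{Uestimate} this gives $|Z^a(U-F_0)|\ls\w{\sigma}^{-2}$, and \eqref{FRF-decay} then yields $|Z^aU|\ls\w{\sigma}^{-1/2}\ln(2+|\sigma|)$ on the support of $\chi(-3\ve\sigma)$. With this bound every term in your bullets is $\ls\ve\w{x}^{-1/2}\w{t-|x|}^{-1}$ without loss. It also handles the profile part of the first estimate directly, as an alternative to the paper's detour through $w_B$.
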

\begin{proof}
When $t\le\frac{1}{\ve}$, $u_{ap}=\ve w_{B}$ holds and \eqref{app-pw} follows from \eqref{LWE-decay}.

When $\frac{1}{\ve}\le t\le\frac{2}{\ve}$, one has
\begin{equation*}
\begin{split}
u_{ap}&=\ve w_{B}(\chi(\ve t)+(1-\chi(\ve t))\chi(-3\ve\sigma))\\
&\quad+\ve(1-\chi(\ve t))\chi(-3\ve\sigma)(r^{-1/2}F_0-w_{B})\\
&\quad+\ve(1-\chi(\ve t))\chi(-3\ve\sigma)r^{-1/2}(U-F_0).
\end{split}
\end{equation*}
For the last line above, \eqref{Uestimate} implies $|U-F_0|=|\int_0^{\tau}\p_{\tau}Uds|\ls\tau\ls\sqrt{\ve}$.
Therefore, \eqref{app-pw} can be achieved by \eqref{LWE-error} and \eqref{LWE-decay}.

When $t\ge\frac{2}{\ve}$, $u_{ap}=\ve \chi(-3\ve\sigma)r^{-1/2}U(\tau,\sigma,\theta)$ holds and \eqref{app-pw} can
be obtained from \eqref{Uestimate}.
\end{proof}

\begin{lemma}\label{Lem4.4}
Under the assumptions of Theorem \ref{thm1},
for any positive constant $B<\tau_0$ and $t\le\frac{B^2}{\ve^2}-1$, one has
\begin{equation}\label{Rapp-L2}
\begin{split}
\|Z^aR_{ap}(t)\|_{L^2(\cK)}&\le C_B\ve^2\w{t}^{\tilde\ve-1/2},\\
\int_0^{B^2/\ve^2-1}\|Z^aR_{ap}(t)\|_{L^2(\cK)}dt&\le C_B\ve^{3/2-\tilde\ve}.
\end{split}
\end{equation}
\end{lemma}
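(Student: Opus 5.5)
We estimate $R_{ap}=-\Box u_{ap}-\sum Q^{\alpha\beta\mu}\p^2_{\alpha\beta}u_{ap}\p_\mu u_{ap}$ separately on the three time regimes used in the proof of \eqref{app-pw}. The second inequality in \eqref{Rapp-L2} then follows by integrating the first: $\int_0^{B^2/\ve^2}\ve^2\w{t}^{\tilde\ve-1/2}dt\ls\ve^2(B/\ve)^{1+2\tilde\ve}\ls\ve^{1-2\tilde\ve}$. This is too weak, so the integrated bound has to be obtained directly, using finer bounds in each regime. Concretely, we aim for $\|Z^aR_{ap}(t)\|_{L^2}\ls\ve^2\w{t}^{-1+\tilde\ve}$ for $t\le 2/\ve$, and $\|Z^aR_{ap}(t)\|_{L^2}\ls\ve^3\w{t}^{-1/2}+\ve^2\w{t}^{-3/2}\ln$-type terms for $t\ge1/\ve$. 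These are consistent with the first line of \eqref{Rapp-L2}, and integrating them gives $\ve^{2}\ln(1/\ve)+\ve^3\cdot\ve^{-1}+\dots\ls\ve^{3/2-\tilde\ve}$. The worst contribution comes from the transition zone $t\sim1/\ve$, where $\ve^2\w{t}^{\tilde\ve-1/2}$ integrated over an interval of length $1/\ve$ gives $\ve^{3/2-\tilde\ve}$.

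\emph{Regime $t\le1/\ve$.} Here $u_{ap}=\ve w_B$ and $\Box w_B=0$, so $R_{ap}=-\ve^2Q(\p w_B,\p^2w_B)$. By \eqref{LWE-decay}, $|Z^aR_{ap}|\ls\ve^2\w{x}^{-1}\w{t-|x|}^{-2}$. Taking the $L^2$ norm in polar coordinates gives $\ve^2(\int\w{r}^{-2}\w{t-r}^{-4}rdr)^{1/2}\ls\ve^2\w{t}^{-1/2}$, and this integrates over $[0,1/\ve]$ to $\ve^{3/2}$.

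\emph{Regime $1/\ve\le t\le2/\ve$.} We use the decomposition of $u_{ap}$ from the proof of \eqref{app-pw}. $\Box$ falling on the cutoffs $\chi(\ve t)$ and $\chi(-3\ve\sigma)$ produces factors $\ve$ or $\ve^2$ multiplying $\ve(w_B-r^{-1/2}U)$ and its first derivatives. These are controlled by \eqref{LWE-error} together with $|U-F_0|\ls\tau\ls\sqrt\ve$, so the differences are $O(\w{t}^{-1}\ln t+\sqrt\ve r^{-1/2})$ pointwise on the support $\{r\ge t/3\}$ (where $\chi(-3\ve\sigma)\ne1$ forces $|\sigma|\ls1/\ve$). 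Their $L^2$ norms on an annulus of width $O(1/\ve)$ are $\ls\ve^2\cdot\sqrt\ve\cdot\ve^{-1/2}\cdot(\dots)$. The remaining terms are the quadratic term and $\ve\Box(r^{-1/2}U)$, which are handled as in the next regime. Since this interval has length $1/\ve$, a bound of $\ve^{5/2-\tilde\ve}$ on it suffices.

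\emph{Regime $t\ge2/\ve$ (main step).} Here $u_{ap}=\ve\chi(-3\ve\sigma)r^{-1/2}U(\tau,\sigma,\theta)$. We write $\Box=-(\p_t+\p_r)(\p_t-\p_r)... $ as $\Box(r^{-1/2}f)=r^{-1/2}(\p_t^2-\p_r^2)f-\tfrac14r^{-5/2}f-r^{-5/2}\p_\theta^2f$. With $\p_t=-\p_\sigma+\tfrac{\ve}{2\sqrt{1+t}}\p_\tau$ and $\p_r=\p_\sigma$, we get $(\p_t^2-\p_r^2)U=-\tfrac{\ve}{\sqrt{1+t}}\p^2_{\tau\sigma}U+O(\ve^2t^{-1}|\p_\tau^2U|+\ve t^{-3/2}|\p_\tau U|)$. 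For the quadratic term, the leading part is $\ve^2r^{-1}Q_{1st}(\tilde\omega)\p_\sigma U\p_\sigma^2U$, because $\p_\alpha\approx\omega_\alpha\p_\sigma$ up to errors $O(r^{-1})$ and $O(\ve t^{-1/2}\p_\tau)$. Since $r^{-1/2}\cdot\ve/\sqrt{1+t}$ agrees with $r^{-1}\ve$ up to $O(\w{\sigma}t^{-3/2})$, the profile equation \eqref{profile-eqn} cancels the two leading terms. Everything left over is bounded via \eqref{Uestimate} by $\ve^2t^{-3/2}\w{\sigma}^{-1}+\ve^3t^{-1}\w{\sigma}^{-1}+\ve^2t^{-2}\ln\w{\sigma}$, plus terms where derivatives fall on $\chi(-3\ve\sigma)$; the latter are supported in $|\sigma|\sim1/\ve$ and carry an extra factor $\ve$. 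Taking $L^2$ over $r\sim t$, with $\int\w{\sigma}^{-2}rdr\ls t$, yields $\ls\ve^3t^{-1/2}+\ve^2t^{-1}$ (with logarithms). Integrated over $[2/\ve,B^2/\ve^2]$, this gives $\ve^3\cdot B/\ve+\ve^2\ln(1/\ve)\ls\ve^{2-}$.

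The main obstacle is carrying out this cancellation carefully under the vector fields $Z^a$. $\Omega$ commutes with the polar structure and $\p$ produces $\p_\sigma$/$\p_\tau$ with additional decay, so \eqref{Uestimate} suffices. The delicate point is checking that the cutoff $\chi(-3\ve\sigma)$ terms, where $\w{\sigma}\sim1/\ve$ and $U$ is only $O(\ln(1/\ve))$, still give at most $\ve^{3/2-\tilde\ve}$ after integration. The logarithmic losses are absorbed into $\w{t}^{\tilde\ve}$.
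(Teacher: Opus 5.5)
Your plan is the paper's proof: the same split at $\ve t=1,2$, the same rewriting on $[1/\ve,2/\ve]$ through $w_B-r^{-1/2}F_0$ (controlled by \eqref{LWE-error}) and $U-F_0=\int_0^\tau\p_\tau U$, and, for $t\ge2/\ve$, the same cancellation of $\ve^2r^{-1/2}t^{-1/2}\p^2_{\sigma\tau}U$ against $\ve^2r^{-1}Q_{1st}(\tilde\omega)\p_\sigma U\p_\sigma^2U$ via \eqref{profile-eqn}, so it does yield \eqref{Rapp-L2}. Two bookkeeping slips do not affect the conclusion but should be corrected. First, on $[0,2/\ve]$ the $L^2$ rate is genuinely $\ve^2\w{t}^{-1/2}$, not $\ve^2\w{t}^{-1+\tilde\ve}$, since $\ve^2Q(\p w_B,\p^2w_B)\approx\ve^2r^{-1}Q_{1st}(\tilde\omega)\p_\sigma F_0\p_\sigma^2F_0$ near the cone; your own regime-by-regime computation already uses this correct rate. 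Second, for $t\ge2/\ve$ your leftover list omits the linear term $\ve r^{-5/2}(\p_\theta^2+\frac14)U=O(\ve t^{-5/2}\ln\w{\sigma})$. With \eqref{Uestimate} this term integrates to $O(\ve^{3/2}\ln\frac1\ve)$, as in the paper, rather than to $O(\ve^{2-})$. The cutoff terms you defer are indeed harmless: because $(\p_t+\p_r)\chi(-3\ve\sigma)=0$, they reduce to $3\ve^3r^{-1/2}(1+t)^{-1/2}\chi'(-3\ve\sigma)\p_\tau U$, consistent with the paper's $O(\ve^3t^{-1}\ln\frac1\ve)$.
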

\begin{proof}
The proof will be separated into three parts: $\ve t\le1$, $1\le\ve t\le2$ and $\ve t\ge2$.

When $\ve t\le1$, one has  $u_{ap}=\ve w_{B}$ and $\Box u_{ap}=\ve\Box w_{B}\equiv0$.
On the other hand, \eqref{LWE-decay} gives
\begin{equation}\label{Rapp-L2-pf1}
\begin{split}
|Z^aR_{ap}|&\ls\ve^2\w{t+|x|}^{-1}\w{t-|x|}^{-1},\\
\|Z^aR_{ap}\|_{L^2(\cK)}&\ls\ve^2\w{t}^{-1}\|\w{t-|x|}^{-1}\|_{L^2(|x|\le t+M_0)}\ls\ve^2\w{t}^{-1/2},
\end{split}
\end{equation}
which yields the first line in \eqref{Rapp-L2} for $\ve t\le1$.
Then integrating \eqref{Rapp-L2-pf1} over $[0,1/\ve]$ implies
\begin{equation}\label{Rapp-L2-pf2}
\int_0^{1/\ve}\|Z^aR_{ap}(t)\|_{L^2(\cK)}dt\ls\ve^2\int_0^{1/\ve}\w{t}^{-1/2}dt\ls\ve^{3/2}.
\end{equation}

When $1\le\ve t\le2$, by the definition \eqref{app-solution} we have
\begin{equation}\label{Rapp-L2-pf3}
\begin{split}
\Box u_{ap}&=\ve\Box[(1-\chi(\ve t))\chi(-3\ve\sigma)r^{-1/2}(U(\tau,\sigma,\theta)-F_0(\sigma,\theta))]\\
&\quad+\ve\Box[(1-\chi(\ve t))\chi(-3\ve\sigma)(r^{-1/2}F_0(\sigma,\theta)-w_{B}(t,x))]\\
&\quad-\ve\Box[(1-\chi(\ve t))(1-\chi(-3\ve\sigma))w_{B}(t,x)]\\
&=R_{ap,1}+R_{ap,2}+R_{ap,3}.
\end{split}
\end{equation}
By $\Box=\p_t^2-\p_r^2-\p_r/r-\p^2_{\theta}/r^2$, one has $R_{ap,1}=R_{ap,11}+R_{ap,12}+R_{ap,13}+R_{ap,14}$, where
\begin{equation}\label{Rapp-L2-pf4}
\begin{split}
&R_{ap,11}=\ve\Box[(1-\chi(\ve t))\chi(-3\ve\sigma)]r^{-1/2}(U(\tau,\sigma,\theta)-F_0(\sigma,\theta)),\\
&R_{ap,12}=\ve\p[(1-\chi(\ve t))\chi(-3\ve\sigma)]\p[r^{-1/2}(U(\tau,\sigma,\theta)-F_0(\sigma,\theta))],\\
&R_{ap,13}=-\ve(1-\chi(\ve t))\chi(-3\ve\sigma)r^{-5/2}(\p^2_{\theta}+1/4)(U(\tau,\sigma,\theta)-F_0(\sigma,\theta)),\\
&R_{ap,14}=\ve(1-\chi(\ve t))\chi(-3\ve\sigma)r^{-1/2}(\p_t^2-\p_r^2)(U(\tau,\sigma,\theta)-F_0(\sigma,\theta)).
\end{split}
\end{equation}
It follows from $U(\tau,\sigma,\theta)-F_0(\sigma,\theta)=\int_0^{\tau}\p_{\tau}U(s,\sigma,\theta)ds$ and \eqref{Uestimate} that
\begin{equation}\label{Rapp-L2-pf5}
\begin{split}
|Z^aR_{ap,11}|+|Z^aR_{ap,12}|+|Z^aR_{ap,13}|&\ls\ve^2\ve\sqrt{1+t}\w{t}^{-1/2}\w{t-|x|}^{\tilde\ve-1/2},\\
\|Z^aR_{ap,11}\|_{L^2}+\|Z^aR_{ap,12}\|_{L^2}+\|Z^aR_{ap,13}\|_{L^2}&\ls\ve^3\w{t}^{1/2+\tilde\ve}
\end{split}
\end{equation}
and
\begin{equation}\label{Rapp-L2-pf5}
\begin{split}
|(\p_t^2-\p_r^2)(U(\tau,\sigma,\theta)-F_0(\sigma,\theta))|
&=|(\p_t-\p_r)(\p_t+\p_r)\int_0^{\tau}\p_{\tau}U(s,\sigma,\theta)ds|\\
&=|(\p_t-\p_r)\frac{\ve}{2\sqrt{1+t}}\p_{\tau}U(\tau,\sigma,\theta)|\\
&\ls\ve\w{t}^{-1/2}\w{t-|x|}^{-2}+\ve\w{t}^{-3/2},\\
\|Z^aR_{ap,14}\|_{L^2}&\ls\ve^2\w{t}^{-1/2}.
\end{split}
\end{equation}
Collecting \eqref{Rapp-L2-pf3}-\eqref{Rapp-L2-pf5} gives
\begin{equation}\label{Rapp-L2-pf6}
\begin{split}
\|Z^aR_{ap,1}(t)\|_{L^2(\cK)}&\ls\ve^2\w{t}^{\tilde\ve-1/2},\\
\int_{1/\ve}^{2/\ve}\|Z^aR_{ap,1}(t)\|_{L^2(\cK)}dt&\ls\ve^{3/2-\tilde\ve}.
\end{split}
\end{equation}
The estimate of $R_{ap,2}$ can be obtained from
\begin{equation}\label{Rapp-L2-pf7}
\begin{split}
R_{ap,2}&=\ve\Box[(1-\chi(\ve t))\chi(-3\ve\sigma)](r^{-1/2}F_0(\sigma,\theta)-w_{B}(t,x))\\
&\quad+\ve\p[(1-\chi(\ve t))\chi(-3\ve\sigma)]\p(r^{-1/2}F_0(\sigma,\theta)-w_{B}(t,x))\\
&\quad-\ve(1-\chi(\ve t))\chi(-3\ve\sigma)r^{-5/2}(\p^2_{\theta}+1/4)(F_0(\sigma,\theta)-\sqrt{r}w_{B}(t,x))\\
&\quad+\ve(1-\chi(\ve t))\chi(-3\ve\sigma)r^{-1/2}(\p_t^2-\p_r^2)(F_0(\sigma,\theta)-\sqrt{r}w_{B}(t,x)).
\end{split}
\end{equation}
For the last line, from $(\p_t+\p_r)F_0(\sigma,\theta)\equiv0$ we arrive at
\begin{equation}\label{Rapp-L2-pf8}
|(\p_t^2-\p_r^2)(\sqrt{r}w_{B})|\ls\w{r}^{-3/2}|(\p^2_{\theta}+1/4)w_{B}|+\sqrt{r}|\Box w_{B}|\ls\w{r}^{-3/2}|(\p^2_{\theta}+1/4)w_{B}|.
\end{equation}
Applying \eqref{LWE-error} and \eqref{LWE-decay} to $R_{ap,3}$ in \eqref{Rapp-L2-pf3}, \eqref{Rapp-L2-pf7} and \eqref{Rapp-L2-pf8} yields
\begin{equation*}
\begin{split}
|Z^aR_{ap,2}|+|Z^aR_{ap,3}|&\ls\ve^3\w{t}^{\tilde\ve-1}+\ve^2\w{t}^{\tilde\ve-3/2}\w{t-|x|}^{-1/2}+\ve\w{t}^{\tilde\ve-5/2}\w{t-|x|}^{-1/2}\\
&\quad+\tilde\ve\w{t}^{-1}\w{x}^{-1/2},
\end{split}
\end{equation*}
which leads to
\begin{equation}\label{Rapp-L2-pf9}
\|Z^aR_{ap,2}\|_{L^2}+\|Z^aR_{ap,3}\|_{L^2}\ls\ve^2\w{t}^{-1/2}.
\end{equation}
For the remaining quadratic terms in \eqref{error-eqn}, the estimates can be analogously obtained as in \eqref{Rapp-L2-pf1} 
by utilizing \eqref{app-pw}.
Collecting \eqref{Rapp-L2-pf2}, \eqref{Rapp-L2-pf3}, \eqref{Rapp-L2-pf6} and \eqref{Rapp-L2-pf9} implies \eqref{Rapp-L2} for $\ve t\le2$.

Finally, we focus on the period of $\ds\frac{2}{\ve}\le t\le\frac{B^2}{\ve^2}-1$. In this case, the approximate solution is
\[
u_{ap}=\ve r^{-1/2}\chi(-3\ve\sigma)U(\tau,\sigma,\theta).
\]
By the support property of the cutoff function $\chi$, it suffices to consider \eqref{Rapp-L2} for $r\ge\frac{2}{3}t$.
It follows from the expression of $u_{ap}$, the estimates in \eqref{Uestimate} and  the equation \eqref{profile-eqn} that
\begin{equation*}
\begin{split}
R_{ap}=&\chi(-3\ve\sigma)\Bigl\{\frac{\ve^2}{r^{1/2}t^{1/2}}\p_{\sigma\tau}^2U-\chi(-3\ve\sigma)
Q_{1st}(\tilde\omega)\frac{\ve^2}{r}\p_\sigma U\p_\sigma^2 U\Bigr\}\\
&+O\bigl(\ve^3t^{-1}\ln\frac{1}{\ve}\bigr)+O\bigl(\ve t^{-5/2}\ln(2+|\sigma|)\bigr)\\
=&\chi(-3\ve\sigma)\ve^2\p_{\sigma\tau}^2U\Bigl\{\frac{r-t}{rt^{1/2}(r^{1/2}+t^{1/2})}+\frac{1}{r}\bigl(1-\chi(-3\ve\sigma)\bigr)\Bigr\}\\
&+O\bigl(\ve^3t^{-1}\ln\frac{1}{\ve}\bigr)+O\bigl(\ve t^{-5/2}\ln(2+|\sigma|)\bigr)\\
=&O\bigl(\ve^2t^{-2}\w\sigma^{-1}\bigr)+O\bigl(\ve^3t^{-1}\ln\frac{1}{\ve}\bigr)+O\bigl(\ve t^{-5/2}\ln(2+|\sigma|)\bigr).
\end{split}
\end{equation*}
Note that $R_{ap}$ is supported in $\{-\frac{2}{3\ve}\le\sigma\le M_0,r\le t+M_0\}$.
Then, we can achieve
\[
\|Z^aR_{ap}(t)\|_{L^2}\le C_B\bigl(\ve^2t^{-3/2}+\ve^{5/2}t^{-1/2}\ln\frac{1}{\ve}+\ve^{1/2}t^{-2}\ln\frac{1}{\ve}\bigr).
\]
Therefore, integrating the above inequality over $[2/\ve,B^2/\ve^2-1]$ yields
\begin{equation}
	\int_{2/\ve}^{B^2/\ve^2-1}\|Z^aR_{ap}\|_{L^2}dt\le C_B\ve^{3/2}\ln\frac{1}{\ve}.
\end{equation}
This completes the proof of Lemma \ref{Lem4.4}.
\end{proof}

\section{Proof of Theorem \ref{thm1}}\label{sect5}

\subsection{Bootstrap assumption}

We make the following bootstrap assumption on the error solution $v$ defined in \eqref{error-solution} for $t\in[0,B^2/\ve^2-1]$ with any
fixed positive constant $B<\tau_0$,
\begin{equation}\label{BA}
\begin{split}
\sum_{|a|\le N}|\p Z^av|&\le\ve^{1.4}\w{x}^{-1/2},\\
\sum_{|a|\le N}|\p Z^av|&\le\ve_1\w{x}^{-1/2}\w{t-|x|}^{-1}\w{t}^{2\tilde\ve},
\end{split}
\end{equation}
where $\ve_1=C_0\ve$ with $C_0\ge1$ being determined later.
Choosing $\ve_0>0$ small enough such that
\begin{equation}\label{BA1}
e^{C_0}\ve_0^{10^{-3}}\le1.
\end{equation}
Note that due to $\supp(u_0,u_1)\subset\{x: |x|\le M_0\}$,
the solution $u$ of problem \eqref{QWE} is supported in $\{x\in\cK:|x|\le t+M_0\}$.

\subsection{Energy estimates}

\begin{lemma}\label{lem:energy:time}
Under the conditions of Theorem \ref{thm1} and assumption \eqref{BA}, it holds that for $t\in[0,\f{B^2}{\ve^2}-1]$,
\begin{equation}\label{energy:time}
\sup_{s\in[0,t]}\sum_{i\le2N}\|\p\p_t^iv(s)\|_{L^2(\cK)}
\ls\ve^{3/2-\tilde\ve}+\ve_1\sum_{j\le2N-1}\int_0^t\|\p^{\le1}\p\p_t^jv(s)\|_{L^2(\cK)}\frac{ds}{\w{s}^{1/2}}.
\end{equation}
\end{lemma}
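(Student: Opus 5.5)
We will prove \eqref{energy:time} by the standard energy method for the quasilinear error equation \eqref{error-eqn}, applied only to time derivatives $\p_t^i v$. The reason is that $\p_t$ is the only field that commutes with $\Box$ and also preserves the boundary condition $\cB v=0$. For $i\le 2N$ we apply $\p_t^i$ to \eqref{error-eqn} and obtain
\[
\Box\p_t^iv+\sum_{\alpha,\beta,\mu}Q^{\alpha\beta\mu}(\p_\mu v+\p_\mu u_{ap})\p^2_{\alpha\beta}\p_t^iv=\p_t^iR_{ap}+\mathcal N_i,
\]
where $\mathcal N_i$ collects the commutator terms $\p_t^{i_1}\p^2 v\,\p_t^{i_2}\p(v+u_{ap})$ with $i_1<i$, $i_1+i_2=i$, together with $\p_t^{i}(\p^2u_{ap}\p v)$. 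We multiply by $\p_t^{i+1}v$ and integrate over $[0,t]\times\cK$, using the perturbed energy $E_i(s)=\frac12\int_\cK\big(|\p\p_t^iv|^2+O(|\p v|+|\p u_{ap}|)|\p\p_t^iv|^2\big)dx$. Because of \eqref{BA} and \eqref{app-pw}, the coefficient $|\p v|+|\p u_{ap}|\ls\ve$ is small, so $E_i\sim\|\p\p_t^iv\|^2$.

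Boundary terms come from integrating by parts in $x$. For Dirichlet data, $\p_t^{i+1}v=0$ on $\p\cK$, so every boundary integrand containing $\p_t^{i+1}v$ vanishes. The remaining quasilinear boundary terms have the form $Q^{\alpha\beta\mu}n_\alpha\p_\beta\p_t^iv\,\p_t^{i+1}v\,\p_\mu(\cdot)$ and also vanish. For Neumann data, $\p_{\bn}\p_t^iv=0$ kills the linear boundary term. The quasilinear boundary terms are of the form $Q^{\alpha\beta\mu}n_\alpha\p_\beta w\,\p_\mu\psi$ with $w,\psi$ satisfying the Neumann condition, and these are annihilated by the admissible condition \eqref{admiss:condition}. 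This step needs care: $u_{ap}$ satisfies $\cB u_{ap}=0$ only through $w_B$ near the boundary, and for $\ve t\ge 2$ we have $u_{ap}\equiv0$ near $\p\cK$.

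Next we control the bulk terms. The terms in which the derivative falls on the coefficients are bounded by $\|\p^2(v+u_{ap})\|_{L^\infty}\|\p\p_t^iv\|^2$. By \eqref{BA} and \eqref{app-pw}, together with $\supp v\subset\{|x|\le t+M_0\}$, this is only $\ve\w{x}^{-1/2}\w{t-|x|}^{-1}$, which is not integrable in time near the boundary or near the light cone. To handle this, we will use the first line of \eqref{BA} away from the cone and $\w{x}^{-1/2}\le C\w{s}^{-1/2}$ near it, so the coefficient contributes $\ve_1\w{s}^{-1/2}$ after dividing the energy identity by $\sup_s\|\p\p_t^iv\|$. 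The term $\p_t^iR_{ap}$ contributes $\int_0^t\|\p_t^iR_{ap}\|ds\ls\ve^{3/2-\tilde\ve}$ by Lemma~\ref{Lem4.4}, and the initial energy is $O(\ve^2)$.

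In $\mathcal N_i$, the low–high products are handled as follows: factors with few derivatives go in $L^\infty$ using \eqref{BA} or \eqref{app-pw}, and they give at most $\ve_1\w{x}^{-1/2}$. The high factor has the form $\p^{\le1}\p\p_t^jv$ with $j\le 2N-1$, which is exactly the right-hand side of \eqref{energy:time}. Here second spatial derivatives of $\p_t^jv$ appear; that is why the statement keeps $\p^{\le1}\p\p_t^j$ rather than reducing it. Terms where the high derivative falls on $u_{ap}$ are bounded by $\ve\|\p\p_t^i u_{ap}\|\cdot\ve^{1.4}$-type products and absorbed into $\ve^{3/2-\tilde\ve}$.

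The main obstacle is the region near the obstacle, where $\w{x}^{-1/2}$ gives no time decay, so the weight $\w{s}^{-1/2}$ is not obvious there. We plan to resolve this by noting that for $|x|\le t/2$ the factor $\w{t-|x|}^{-1}\w{t}^{2\tilde\ve}$ in \eqref{BA}, combined with the $\ve^{1.4}$ bound by interpolation, gives $\ve_1\w{s}^{-1/2}$ as long as $\ve^{0.4}\w{t}^{1/2}$-type losses are harmless for $t\le B^2\ve^{-2}$. Failing that, we will write the coefficient in terms of $\p\p_t^jv$ itself and keep it on the right-hand side.
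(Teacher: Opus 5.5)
Your outline follows the paper's proof. You commute only $\p_t^i$ with \eqref{error-eqn}, multiply by $\p_t^{i+1}v$, and put the quasilinear term in divergence form so the time-boundary piece is $\ls\ve_1\|\p\p_t^iv(t)\|^2$ and can be absorbed. The spatial boundary terms vanish by $\p_t^{i+1}v|_{\p\cK}=0$ (Dirichlet) or by \eqref{admiss:condition} with $w=\p_t^iv$, $\psi=u$ (Neumann), and the source is bounded with Lemma~\ref{Lem4.4}. However, the step that produces the weight $\w{s}^{-1/2}$ is wrong as written. Using the first line of \eqref{BA} away from the cone gives only $\ve^{1.4}$ near the obstacle, and this is $\le\ve_1\w{s}^{-1/2}$ only for $s\ls\ve^{-0.8}$. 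The $\ve^{0.4}\w{t}^{1/2}$ loss in your proposed repair is of size $\ve^{-0.6}$ at $t\sim B^2\ve^{-2}$, so it is not harmless. No interpolation is needed, only the opposite assignment. On $\{|x|\le t/2\}$ one has $\w{t-|x|}^{-1}\ls\w{t}^{-1}$, so the second line of \eqref{BA} gives $\ve_1\w{t}^{-1+2\tilde\ve}$. On $\{|x|\ge t/2\}$ the first line gives $\ve^{1.4}\w{x}^{-1/2}\ls\ve^{1.4}\w{t}^{-1/2}$. In both regions \eqref{app-pw} gives $\ve\w{t}^{-1/2}$. Thus $|\p^{\le1}\p\p_t^kv|+|\p^{\le1}\p\p_t^lu_{ap}|\ls\ve_1\w{t}^{-1/2}$ for $k\le N-1$ and every $l$, which is what the paper uses tacitly when it substitutes \eqref{app-pw} and \eqref{BA}.

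Second, your handling of terms where many derivatives fall on $u_{ap}$ does not close. Since $\|\p\p_t^lu_{ap}(s)\|_{L^2(\cK)}\sim\ve$ does not decay, estimating, say, $\p^2\p_t^jv\,\p\p_t^lu_{ap}$ by $\|\p^2\p_t^jv\|_{L^\infty}\|\p\p_t^lu_{ap}\|_{L^2}$ gives a contribution of order $\ve^{2.4}\int_0^{B^2/\ve^2}\w{s}^{-1/2}ds\sim\ve^{1.4}$. This holds even with the best available bound $\ve^{1.4}\w{s}^{-1/2}$ on the first factor, and $\ve^{1.4}$ is not $\ls\ve^{3/2-\tilde\ve}$. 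An additive $\ve^{1.4}$ would also prevent improving the first line of \eqref{BA} later. Because \eqref{app-pw} bounds $\p Z^au_{ap}$ pointwise for every $a$, the $u_{ap}$ factor should always go in $L^\infty$ with bound $\ve\w{s}^{-1/2}$, and the $v$ factor in $L^2$. For the latter, use $\p\p_t^kv=\p_t\p\p_t^{k-1}v$ for $1\le k\le2N$, and $\p^2\p_t^jv$ with $j\le2N-1$. These terms then belong to the $\ve_1$-integral on the right of \eqref{energy:time}, not to the source term. The low/high splitting is needed only for the $v\cdot v$ products.
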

\begin{proof}
At first, applying $\p_t^i$ with $i=0,1,\cdots,2N$ to \eqref{error-eqn} yields
\begin{equation}\label{energy:time1}
\Box\p_t^iv=-\sum_{j+k=i}\sum_{\alpha,\beta,\mu=0}^2C^i_{jk}Q^{\alpha\beta\mu}
\Big\{\p^2_{\alpha\beta}\p_t^jv(\p_{\mu}\p_t^kv+\p_{\mu}\p_t^ku_{ap})
+\p^2_{\alpha\beta}\p_t^ju_{ap}\p_{\mu}\p_t^kv\Big\}+\p_t^iR_{ap},
\end{equation}
where $C^i_{jk}=\frac{i!}{j!k!}$.
Multiplying \eqref{energy:time1} by $\p_t\p_t^iv$ leads to
\begin{equation}\label{energy:time2}
\begin{split}
&\quad\;\frac12\p_t(|\p_t\p_t^iv|^2+|\nabla\p_t^iv|^2)-\dive(\p_t\p_t^iv\nabla\p_t^iv)\\
&=-\sum_{\alpha,\beta,\mu=0}^2Q^{\alpha\beta\mu}\p_t\p_t^iv\p^2_{\alpha\beta}\p_t^iv(\p_{\mu}v+\p_{\mu}u_{ap})
+\p_t\p_t^iv\p_t^iR_{ap}+J_1^i
\end{split}
\end{equation}
and
\begin{equation}\label{energy:time3}
\begin{split}
J_1^i&=-\sum_{\substack{j+k=i,\\j<i}}\sum_{\alpha,\beta,\mu=0}^2C^i_{jk}Q^{\alpha\beta\mu}\p_t\p_t^iv\p^2_{\alpha\beta}\p_t^jv(\p_{\mu}\p_t^kv
+\p_{\mu}\p_t^ku_{ap})\\
&\qquad-\sum_{j+k=i}\sum_{\alpha,\beta,\mu=0}^2C^i_{jk}Q^{\alpha\beta\mu}\p_t\p_t^iv\p^2_{\alpha\beta}\p_t^ju_{ap}\p_{\mu}\p_t^kv.
\end{split}
\end{equation}
For the first term in the second line of \eqref{energy:time2}, it follows from a direct computation that
\begin{equation}\label{energy:time4}
\begin{split}
&\quad\;-Q^{\alpha\beta\mu}\p_t\p_t^iv\p^2_{\alpha\beta}\p_t^iv(\p_{\mu}v+\p_{\mu}u_{ap})\\
&=-\p_{\alpha}[Q^{\alpha\beta\mu}\p_t\p_t^iv\p_{\beta}\p_t^iv(\p_{\mu}v+\p_{\mu}u_{ap})]
+Q^{\alpha\beta\mu}\p_t\p_t^iv\p_{\beta}\p_t^iv(\p^2_{\alpha\mu}v+\p^2_{\alpha\mu}u_{ap})\\
&\quad+\frac12\p_t[Q^{\alpha\beta\mu}\p_{\alpha}\p_t^iv\p_{\beta}\p_t^iv(\p_{\mu}v+\p_{\mu}u_{ap})]
-\frac12Q^{\alpha\beta\mu}\p_{\alpha}\p_t^iv\p_{\beta}\p_t^iv(\p^2_{0\mu}v+\p^2_{0\mu}u_{ap}),
\end{split}
\end{equation}
where the summation $\ds\sum_{\alpha,\beta,\mu=0}^2$ is omitted.
Integrating \eqref{energy:time2}-\eqref{energy:time4} over $[0,t]\times\cK$
with the boundary conditions $\cB\p_t^lv|_{[0,\infty)\times\p\cK}=\cB\p_t^lu_{ap}|_{[0,\infty)\times\p\cK}=0$ for any integer $l\ge0$
and using the admissible condition \eqref{admiss:condition} yield
\begin{equation}\label{energy:time5}
\|\p\p_t^iv(t)\|^2_{L^2(\cK)}\ls\|\p\p_t^iv(0)\|^2_{L^2(\cK)}+\ve_1\|\p\p_t^iv(t)\|^2_{L^2(\cK)}
+\int_0^t\|(J_1^i,J_2^i,\p_t\p_t^iv\p_t^iR_{ap})\|_{L^1(\cK)}ds,
\end{equation}
where \eqref{app-pw} and \eqref{BA} have been used, and $J_2^i$ is defined by
\begin{equation}\label{energy:time6}
J_2^i=Q^{\alpha\beta\mu}\p_t\p_t^iv\p_{\beta}\p_t^iv(\p^2_{\alpha\mu}v+\p^2_{\alpha\mu}u_{ap})
-\frac12Q^{\alpha\beta\mu}\p_{\alpha}\p_t^iv\p_{\beta}\p_t^iv(\p^2_{0\mu}v+\p^2_{0\mu}u_{ap}).
\end{equation}
Substituting \eqref{app-pw}, \eqref{BA} into \eqref{energy:time5} and \eqref{energy:time6} with $i\le2N$ implies
\begin{equation*}
\begin{split}
\sum_{i\le2N}\|\p\p_t^iv(t)\|^2_{L^2(\cK)}&\ls\sum_{i\le2N}\|\p\p_t^iv(0)\|^2_{L^2(\cK)}
+\sum_{i\le2N}\int_0^t\|\p_t\p_t^iv(s)\|_{L^2(\cK)}\|\p_t^iR_{ap}\|_{L^2(\cK)}ds\\
&+\ve_1\sum_{i\le2N}\sum_{j\le2N-1}\int_0^t\|\p\p_t^iv(s)\|_{L^2(\cK)}\|\p^{\le1}\p\p_t^jv(s)\|_{L^2(\cK)}\frac{ds}{\w{s}^{1/2}}.
\end{split}
\end{equation*}
Let $\ds E(t)=\sup_{s\in[0,t]}\sum_{i\le2N}\|\p\p_t^iv(s)\|_{L^2(\cK)}$. We then have
\begin{equation*}
\begin{split}
E(t)&\ls\ve^2+\ve_1\sum_{j\le2N-1}\int_0^t\|\p^{\le1}\p\p_t^jv(s)\|_{L^2(\cK)}\frac{ds}{\w{s}^{1/2}}
+\sum_{i\le2N}\int_0^t\|\p_t^iR_{ap}\|_{L^2(\cK)}ds\\
&\ls\ve^{3/2-\tilde\ve}+\ve_1\sum_{j\le2N-1}\int_0^t\|\p^{\le1}\p\p_t^jv(s)\|_{L^2(\cK)}\frac{ds}{\w{s}^{1/2}},
\end{split}
\end{equation*}
where  $v(0,x)=O(\ve^2)$ and \eqref{Rapp-L2} are used.
This completes the proof of \eqref{energy:time}.
\end{proof}

The following lemma is quite different from \cite[Lemma 4.2]{HouYinYuan26} and will play the key roles
in the local energy decay estimates \eqref{loc:energyB} and the energy estimates \eqref{energyB} below.
\begin{lemma}
Under the conditions of Theorem \ref{thm1} and assumption \eqref{BA}, it holds that for $t\in[0,\f{B^2}{\ve^2}-1]$,
\begin{equation}\label{loc:energyA}
\|\p^{\le1}v\|_{L^2(\cK_R)}\ls\ve^{3/2-2\tilde\ve}\w{t}^{-1/2-\tilde\ve}.
\end{equation}
\end{lemma}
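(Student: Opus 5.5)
We will derive \eqref{loc:energyA} from the local energy estimate \eqref{loc:ibvp} with $G^r\equiv0$, applied to the error equation \eqref{error-eqn}. The first step is to write the right-hand side of \eqref{error-eqn} in divergence form, modulo the remainder $R_{ap}$, using the identity \eqref{wave:div}. Since $Q(\p u,\p^2u)-Q(\p u_{ap},\p^2u_{ap})$ is the quadratic expression \eqref{wave:div} evaluated with $u=v+u_{ap}$ minus the same expression with $u_{ap}$, it equals $\sum_\alpha\p_\alpha G^\alpha$ with
\[
G^\alpha=\tfrac12\sum_{\beta,\mu}\bigl(Q^{\alpha\beta\mu}+Q^{\beta\alpha\mu}-Q^{\beta\mu\alpha}\bigr)\bigl(\p_\beta v\,\p_\mu v+\p_\beta v\,\p_\mu u_{ap}+\p_\beta u_{ap}\,\p_\mu v\bigr).
\]
We then split $v=v_1+v_2$. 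Here $v_1$ solves $\Box v_1=\sum\p_\alpha G^\alpha$ with the data of $v$, which are $O(\ve^2)$ in $H^1$. The other piece $v_2$ solves $\Box v_2=R_{ap}$ with zero data. Both satisfy the boundary condition $\cB\cdot=0$.

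\textbf{Estimate of $v_1$.} We apply \eqref{loc:ibvp} with $\eta=\tilde\ve$. The initial terms contribute $O(\ve^2)$, because $G^\alpha(0)$ is quadratic and contains a factor $\p v(0)=O(\ve^2)$. The compact-region term $\ln(2+t)\sup_s\w{s}\|\p^{\le1}G^\alpha(s)\|_{L^2(\cK_3)}$ is handled by the first line of \eqref{BA}, which gives $|\p^{\le 2}v|\ls\ve^{1.4}$ near the obstacle. Combining this with \eqref{app-pw}, which gives $|\p^{\le2}u_{ap}|\ls\ve\w{t}^{-1}$ there, and with the bound $\w{s}\|\p^{\le 2}v(s)\|_{L^2(\cK_3)}$, this term should be $\ls \ve\,\ln(2+t)\sup_s\w s\|\p^{\le2}v\|_{L^2(\cK_3)}$. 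We plan to close it as a bootstrap quantity: the quantity $\sup_s\w{s}^{1/2+\tilde\ve}\|\p^{\le1}v\|_{L^2(\cK_R)}$, together with elliptic regularity \eqref{ellip} to pass to higher derivatives, appears on both sides with a small coefficient. For the exterior weighted term
\[
\w{t}^{\tilde\ve}\ln^2(2+t)\sup\w{y}^{1/2}\cW_{1,1}|Z^{\le2}G^\alpha|,
\]
we use the second line of \eqref{BA} for $v$ and \eqref{app-pw} for $u_{ap}$. This gives
\[
|Z^{\le2}G^\alpha|\ls\ve\ve_1\w{x}^{-1}\w{t-|x|}^{-2}\w{t}^{2\tilde\ve}
\]
in the cross terms, but this decays too slowly: with $\cW_{1,1}$ weights it yields roughly $\ve\ve_1\w{t}^{1/2+3\tilde\ve}$, which is far from the required bound.

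\textbf{Main obstacle and fix.} The main obstacle is therefore the exterior cross term. Our first choice is to interpolate between the two lines of \eqref{BA} for the $v$ factor: on the relevant region use $\min\{\ve^{1.4}\w{x}^{-1/2},\ve_1\w{x}^{-1/2}\w{t-|x|}^{-1}\w t^{2\tilde\ve}\}$. For the $u_{ap}$ factor we keep $\ve\w{x}^{-1/2}\w{t-|x|}^{-1}$. In \eqref{loc:ibvp}, only $\cW_{1,1}$ (one power of $\w{t+|x|}$) is required, so we trade the factor $\w{t-|x|}^{-1}$ against the $\w{t+|x|}$ weight, using $t\ls B^2\ve^{-2}$. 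We aim for the bound
\[
\ve\cdot\ve^{1.4\theta}\ve_1^{1-\theta}\w t^{1/2-\ldots}\ls\ve^{3/2-2\tilde\ve}\w t^{-1/2-\tilde\ve}\cdot\w t,
\]
where the extra $\w t$ is already built into the left side of \eqref{loc:ibvp}, and we absorb the remaining powers of $t$ via $\ve^{-2}$, choosing $\theta$ close to $1$. The $v\,v$ term is better, since it carries $\ve^{2.8}$. If this bookkeeping fails, the fallback is to exploit that $u_{ap}$ has the good derivative $(\p_t+\p_r)u_{ap}=O(\ve\w t^{-3/2})$ when paired against $\p v$.

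\textbf{Estimate of $v_2$.} We estimate $v_2$ directly by the energy estimate together with local decay \eqref{loc:decay}, splitting $R_{ap}$ into a piece supported near the obstacle, which is zero for $\ve t\ge 2$, and a piece far away. The far piece is handled by Duhamel's formula with the $L^2$ bound \eqref{Rapp-L2}: $\|\p v_2(t)\|_{L^2}\ls\int_0^t\|R_{ap}\|\ls\ve^{3/2-\tilde\ve}$. Near the obstacle we need the decay $\w t^{-1/2-\tilde\ve}$. We obtain it by cutting off $v_2$ outside $|x|\le 2$, as in the proof of \eqref{loc:ibvp}, and using the pointwise form of $R_{ap}$ from the proof of Lemma \ref{Lem4.4}, namely $R_{ap}=O(\ve^2t^{-2}\w\sigma^{-1})+O(\ve^3t^{-1}\ln\frac1\ve)+\cdots$ supported near the cone. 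We then feed this into \eqref{pw:compact}/\eqref{dpw:ivp} for the Cauchy part, which yields $\ve^{3/2-2\tilde\ve}\w t^{-1/2-\tilde\ve}$ after again using $t\le B^2\ve^{-2}$. Summing the bounds for $v_1$ and $v_2$ gives \eqref{loc:energyA}.
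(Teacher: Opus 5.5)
Your treatment of $v_1$ can be made to work (see the last remark below), but the argument for $v_2$, the piece with source $R_{ap}$, has a genuine gap.

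\textbf{Why the $v_2$ argument fails.} The estimates you invoke do not deliver what is needed. \eqref{pw:compact} requires a source supported in a fixed ball. \eqref{dpw:ivp} controls only $\p w$, and so does \eqref{dpw:ibvp}, which moreover carries the heavier weight $\cW_{3/2+\eta,1}$. On the other hand, the cut-off argument behind \eqref{loc:ibvp} needs the Cauchy solution $w^c$ itself on $\{|y|\le2\}$, because $[\Delta,\chi_{[1,2]}]w^c$ is fed into \eqref{loc:decay}. In addition, \eqref{loc:energyA} contains $\|v\|_{L^2(\cK_R)}$.

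With the pointwise form of $R_{ap}$ you quote, the majorant $O(\ve^3t^{-1}\ln\frac1\ve)$ is only available on the whole support $-\frac{2}{3\ve}\le\sigma\le M_0$. In particular it sits where $\w{t-|x|}\sim\ve^{-1}$, so $\w{y}^{1/2}\cW_{1+\mu+\nu,1}|R_{ap}|$ can only be bounded by $\ve^2\w{s}^{1/2+\mu+\nu}\ln\frac1\ve$. Integrating $\p_rw^c$ inward from the cone then gives only $|w^c(t,y)|\ls\ve^2\w{t}^{\mu+\nu}\ln\frac1\ve$ for $|y|\le2$, that is, no decay at all. Inserting this into \eqref{loc:decay}, the boundary correction (the analogue of $\tilde v_2^b$) satisfies only $\|\p^{\le1}\tilde v\|_{L^2(\cK_R)}\ls\ve^2\w{t}^{\mu+\nu}\ln(2+t)\ln\frac1\ve$. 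At $t\sim\ve^{-2}$ this is of size $\ve^{2-}$, whereas \eqref{loc:energyA} requires $\ve^{5/2-}$. The restriction $t\le B^2\ve^{-2}$ cannot absorb this loss. The loss comes from bounding $R_{ap}$ in absolute value, which discards its structure $R_{ap}=-\Box u_{ap}-Q(\p u_{ap},\p^2u_{ap})$.

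\textbf{The missing idea.} Never treat $R_{ap}$ as a free-standing source. In your notation, $\ve w_B$ and $u_{ap}$ have the same Cauchy data (since $\chi\equiv1$ near $0$) and both satisfy $\cB\cdot=0$. Hence $v_2=\ve w_B-u_{ap}+w_3$, where $\Box w_3=-Q(\p u_{ap},\p^2u_{ap})$ with zero data.
\begin{itemize}
\item The source of $w_3$ is in divergence form, so \eqref{loc:ibvp} gives $\|\p^{\le1}w_3\|_{L^2(\cK_R)}\ls\ve^2\w{t}^{\tilde\ve-1/2}$, which is admissible.
\item On $\cK_R$ the difference $\ve w_B-u_{ap}$ vanishes for $\ve t\le1$. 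For $\ve t\ge1$ it equals $\ve(1-\chi(\ve t))w_B$, of local size $\ve\w{t}^{-1}$. This is admissible exactly because $\w t\gtrsim\ve^{-1}$ there.
\end{itemize}
This is the paper's proof in another guise. For $\ve t\le1$ it applies \eqref{loc:ibvp} to $\Box v=\Box u$ written in the divergence form \eqref{eqn:div}. For $\ve t\ge1$ it applies \eqref{loc:ibvp} to $u$ itself, accepting the data term $\ve\w{t}^{\tilde\ve-1}\ls\ve^{3/2-2\tilde\ve}\w{t}^{-1/2-\tilde\ve}$. It then uses that $u_{ap}$ vanishes on $\cK_R$ for $\ve t\ge2$.

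\textbf{Remarks on $v_1$.} What you called the main obstacle is harmless. Indeed
$\ve\ve_1\w{t}^{1/2+3\tilde\ve}=C_0\ve^{1/2+2\tilde\ve}\w{t}^{4\tilde\ve}\cdot\ve^{3/2-2\tilde\ve}\w{t}^{1/2-\tilde\ve}$, and $\ve^{1/2+2\tilde\ve}\w{t}^{4\tilde\ve}\ls\ve^{1/2-6\tilde\ve}$ for $t\le B^2\ve^{-2}$, so no interpolation is needed. Likewise, the compact-region term follows pointwise from the second line of \eqref{BA}, which gives $|\p Z^av|\le\ve_1\w{t}^{2\tilde\ve-1}$ on $\{|x|\le3\}$. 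No $L^2$ bootstrap or elliptic step is required there.
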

\begin{proof}
Note that the equation \eqref{QWE} can be reformulated in the divergence form
\begin{equation}\label{eqn:div}
\Box u=-\frac12\sum_{\alpha,\beta,\mu=0}^2[\p_{\alpha}(Q^{\alpha\beta\mu}\p_{\beta}u\p_{\mu}u)
+\p_{\beta}(Q^{\alpha\beta\mu}\p_{\alpha}u\p_{\mu}u)
-\p_{\mu}(Q^{\alpha\beta\mu}\p_{\alpha}u\p_{\beta}u)].
\end{equation}

When $\ve t\le1$, one can find that $u_{ap}=\ve w_{B}$ and $\Box u_{ap}=\ve\Box w_{B}\equiv0$.
Applying \eqref{loc:ibvp} to $\Box v=\Box u$ in \eqref{eqn:div} with $\eta=\tilde\ve/2$ leads to
\begin{equation}\label{loc:energyA1}
\begin{split}
\w{t}\|\p^{\le1}v\|_{L^2(\cK_R)}&\ls\ve^2\w{t}^{\tilde\ve}+\ln(2+t)\sum_{|a|\le1}\sup_{s\in[0,t]}\w{s}\|\p^a[(\p u)^2]\|_{L^2(\cK_3)}\\
&\quad+\w{t}^{\tilde\ve}\sum_{|a|\le2}\sup_{(s,y)\in[0,t]\times\cK}\w{y}^{1/2}\cW_{1,1}(s,y)|Z^a[(\p u)^2]|\\
&\ls\ve^2\w{t}^{\tilde\ve}+\ve^{1.4}\ve_1\w{t}^{1/2+3\tilde\ve}+\ve^2\w{t}^{1/2+\tilde\ve},
\end{split}
\end{equation}
where $v(0,x)=O(\ve^2)$, \eqref{app-pw} and \eqref{BA} are used.
Then it follows from \eqref{loc:energyA1}, $\ve_1=C_0\ve$ and the smallness of $\ve_0$ that
\begin{equation}\label{loc:energyA2}
\|\p^{\le1}v\|_{L^2(\cK_R)}\ls\ve^2\w{t}^{\tilde\ve-1}+\ve^{2.4}C_0\w{t}^{3\tilde\ve-1/2}+\ve^2\w{t}^{\tilde\ve-1/2},
\end{equation}
which yields \eqref{loc:energyA} for $\ve t\le1$.

When $\ve t\ge1$, utilizing \eqref{loc:ibvp} to $\Box u$ instead of $\Box v$ with $\eta=\tilde\ve/2$ gives
\begin{equation}\label{loc:energyA3}
\begin{split}
\w{t}\|\p^{\le1}u\|_{L^2(\cK_R)}&\ls\ve\w{t}^{\tilde\ve}+\ln(2+t)\sum_{|a|\le1}\sup_{s\in[0,t]}\w{s}\|\p^a[(\p u)^2]\|_{L^2(\cK_3)}\\
&\quad+\w{t}^{\tilde\ve}\sum_{|a|\le2}\sup_{(s,y)\in[0,t]\times\cK}\w{y}^{1/2}\cW_{1,1}(s,y)|Z^a[(\p u)^2]|\\
&\ls\ve\w{t}^{\tilde\ve}+\ve^{1.4}\ve_1\w{t}^{1/2+3\tilde\ve}+\ve^2\w{t}^{1/2+\tilde\ve}.
\end{split}
\end{equation}
Note that when $\ve t\ge2$, $u_{ap}=\ve\chi(-3\ve\sigma)r^{-1/2}U(\tau,\sigma,\theta)$ vanishes for $|x|\le R$
($R>1$ is any fixed constant) and small $\ve$.
Collecting \eqref{loc:energyA2} and \eqref{loc:energyA3} with \eqref{app-pw} completes the proof of \eqref{loc:energyA}.
\end{proof}

\begin{lemma}
Under the conditions of Theorem \ref{thm1} and assumption \eqref{BA},  we have for $t\in[0,B^2/\ve^2-1]$,
\begin{equation}\label{energy:dtdx}
\sum_{|a|\le2N}\|\p\p^av\|_{L^2(\cK_R)}\ls\ve^{3/2-3\tilde\ve}.
\end{equation}
\end{lemma}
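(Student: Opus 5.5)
We first close the estimate for pure time derivatives, then recover all spatial derivatives near the obstacle from the equation. The first step is to bound $\sum_{i\le 2N}\|\p\p_t^iv\|_{L^2(\cK)}$ by combining the time-derivative energy inequality \eqref{energy:time} with the Gronwall Lemma \ref{lem:Gronwall}. On the right-hand side of \eqref{energy:time} the integrand contains $\|\p^{\le1}\p\p_t^jv\|_{L^2(\cK)}$ with $j\le 2N-1$. This involves one extra spatial derivative $\p_x\p\p_t^jv$, which must be controlled by time derivatives.

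To do so we use the equation \eqref{error-eqn} written as
\begin{equation*}
\Delta\p_t^jv=\p_t^{j+2}v+\p_t^j\Big(\sum Q^{\alpha\beta\mu}\{\cdots\}\Big)-\p_t^jR_{ap}.
\end{equation*}
We then apply the elliptic estimate \eqref{ellip} with $|a|=2$ (note $\cB\p_t^jv=0$ on $\p\cK$). This gives
\begin{equation*}
\|\p_x^2\p_t^jv\|_{L^2(\cK)}\ls\|\p\p_t^{j+1}v\|_{L^2}+\|\p_t^jR_{ap}\|_{L^2}+\ve_1\|\p^2\p_t^{\le j}v\|_{L^2}+\|\p_t^jv\|_{H^1(\cK_{R+1})}.
\end{equation*}
In the nonlinear terms the second-derivative factor $\p^2_{\alpha\beta}\p_t^{j'}v$ carries a small coefficient $|\p v|+|\p u_{ap}|\ls\ve_1$ by \eqref{BA} and \eqref{app-pw}, so it can be absorbed. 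Mixed terms $\p_x\p_t^{j+1}v$ are already in $\p\p_t^{j+1}v$. For lower-order factors where $\p_t^{k}$ hits the coefficient, we use \eqref{BA} in $L^\infty$ for $k\le N$ and swap roles otherwise, since $2N$ derivatives split as one factor with at most $N$. The local term $\|\p_t^jv\|_{H^1(\cK_{R+1})}$ is bounded by a Poincaré/fundamental-theorem argument from $\|\p^{\le1}\p_t^{j}v\|_{L^2(\cK_{R+1})}$. For $j=0$ it is controlled by \eqref{loc:energyA}, giving $\ve^{3/2-2\tilde\ve}\w{t}^{-1/2-\tilde\ve}$. For $j\ge1$ it is dominated by $\|\p\p_t^{j-1}v\|$ terms already on the left.

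Substituting into \eqref{energy:time}, the quantity $E(t)=\sup_{s\le t}\sum_{i\le 2N}\|\p\p_t^iv(s)\|_{L^2}$ satisfies
\begin{equation*}
E(t)\ls\ve^{3/2-\tilde\ve}+\ve_1\int_0^t\frac{E(s)}{\w{s}^{1/2}}ds+\ve_1\int_0^t\ve^{3/2-2\tilde\ve}\w{s}^{-1-\tilde\ve}ds+\ve_1\int_0^t\frac{\|R_{ap}\|_{H^{2N}}}{\w{s}^{1/2}}ds.
\end{equation*}
Lemma \ref{lem:Gronwall} then gives $E(t)\ls e^{C\ve_1\w{t}^{1/2}}\ve^{3/2-2\tilde\ve}$. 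Since $t\le B^2/\ve^2$, we have $\ve_1\w{t}^{1/2}\le C_0B$, so the exponential is $\le e^{CC_0}\le\ve^{-\tilde\ve}$ by \eqref{BA1}. This yields $E(t)\ls\ve^{3/2-3\tilde\ve}$.

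Finally we bound $\|\p\p^av\|_{L^2(\cK_R)}$ for $|a|\le 2N$ by induction on the number of spatial derivatives. We repeatedly use \eqref{ellip} on $\p_t^{k}v$, converting $\Delta$ into $\p_t^2$ plus nonlinear terms (absorbed as before) plus $R_{ap}$, which is bounded by \eqref{Rapp-L2}. The lower-order local norms are handled by the bound on $E$ and \eqref{loc:energyA}.

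We expect the main obstacle to be the Gronwall step. The linear growth coefficient $\ve_1\w{s}^{-1/2}$ integrates to $\ve_1\w{t}^{1/2}\sim C_0B$, which is exactly critical on the time scale $B^2/\ve^2$. It costs only the factor $e^{CC_0}$, and absorbing this factor via \eqref{BA1} is what produces the loss $\ve^{-\tilde\ve}$.
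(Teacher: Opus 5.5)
Your proposal is correct and follows the paper's proof essentially step for step. The route is the same: you remove the extra spatial derivative on the right of \eqref{energy:time} with the elliptic estimate \eqref{ellip} applied to $\p_t^jv$ (the $j=1$ case of the paper's \eqref{energy:dtdx4}), control the leftover local norm of $v$ by \eqref{loc:energyA}, close the time-derivative energy with Lemma~\ref{lem:Gronwall} and \eqref{BA1}, and then recover all $\p\p^av$ by induction on the number of spatial derivatives via \eqref{ellip}. The one looseness, which the paper shares, is that the Gronwall factor is really $e^{CC_0B}$ rather than $e^{C_0}$; it is still absorbed into $\ve^{-\tilde\ve}$ once $\ve_0$ is also allowed to depend on $B$.
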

\begin{proof}
Set $\ds E_j(t)=\sum_{k=0}^{2N-j}\|\p\p_t^kv(t)\|_{H^j(\cK)}$ with $0\le j\le2N$.
Then one has that for $j\ge1$,
\begin{equation}\label{energy:dtdx1}
\begin{split}
E_j(t)&\ls\sum_{k=0}^{2N-j}[\|\p\p_t^kv(t)\|_{L^2(\cK)}
+\sum_{1\le|a|\le j}(\|\p_t\p_t^k\p_x^av(t)\|_{L^2(\cK)}+\|\p_x\p_t^k\p_x^av(t)\|_{L^2(\cK)})]\\
&\ls E_0(t)+E_{j-1}(t)+\sum_{k=0}^{2N-j}\sum_{2\le|a|\le j+1}\|\p_t^k\p_x^av(t)\|_{L^2(\cK)},
\end{split}
\end{equation}
where we have used the fact of
\begin{equation*}
\sum_{k=0}^{2N-j}\sum_{1\le|a|\le j}\|\p_t\p_t^k\p_x^av(t)\|_{L^2(\cK)}
\ls\sum_{k=0}^{2N-j}\sum_{|a'|\le j-1}\|\p_x\p_t^{k+1}\p_x^{a'}v(t)\|_{L^2(\cK)}
\ls E_{j-1}(t).
\end{equation*}
For the last term in \eqref{energy:dtdx1}, it can be deduced from the elliptic estimate \eqref{ellip} that
\begin{equation}\label{energy:dtdx2}
\begin{split}
\|\p_t^k\p_x^av\|_{L^2(\cK)}&\ls\|\Delta_x\p_t^kv\|_{H^{|a|-2}(\cK)}+\|\p_t^kv\|_{H^{|a|-1}(\cK_{R+1})}\\
&\ls\|\p_t^{k+2}v\|_{H^{|a|-2}(\cK)}+\|\p_t^k\Box v\|_{H^{|a|-2}(\cK)}\\
&\quad+\|\p_t^kv\|_{L^2(\cK_{R+1})}+\sum_{1\le|b|\le|a|-1}\|\p_t^k\p_x^bv\|_{L^2(\cK_{R+1})},
\end{split}
\end{equation}
where $\Delta_x=\p_t^2-\Box$ has been used.
Meanwhile, it follows from  \eqref{error-eqn}, \eqref{app-pw}, \eqref{Rapp-L2} and \eqref{BA} with $|a|+k\le2N+1$ that
\begin{equation}\label{energy:dtdx3}
\begin{split}
\|\p_t^k\Box v\|_{H^{|a|-2}(\cK)}
&\ls\sum_{l\le k}\ve_1(\|\p_t^l\p v\|_{H^{|a|-1}(\cK)}+\|\p_t^{l+2}v\|_{H^{|a|-2}(\cK)})+\ve^{3/2}\\
&\ls\ve_1[E_j(t)+E_{j-1}(t)]+\ve^{3/2}.
\end{split}
\end{equation}
Combining \eqref{energy:dtdx1}-\eqref{energy:dtdx3} shows that for $j\ge1$,
\begin{equation}\label{energy:dtdx4}
E_j(t)\ls E_0(t)+E_{j-1}(t)+\ve_1E_j(t)+\|v\|_{L^2(\cK_{R+1})}.
\end{equation}
Substituting \eqref{loc:energyA} and \eqref{energy:dtdx4} with $j=1$ and the smallness of $\ve_1$ into \eqref{energy:time} leads to
\begin{equation*}
E(t)=\sup_{s\in[0,t]}\sum_{i\le2N}\|\p\p_t^iv(s)\|_{L^2(\cK)}\ls\ve^{3/2-2\tilde\ve}+\ve_1\int_0^t\frac{E(s)ds}{\w{s}^{1/2}}.
\end{equation*}
Applying Lemma \ref{lem:Gronwall} to the above inequality with \eqref{BA1} yields
\begin{equation*}
E(t)\ls\ve^{3/2-2\tilde\ve}e^{\ve_1\w{t}^{1/2}}\ls\ve^{3/2-2\tilde\ve}e^{C_0}\ls\ve^{3/2-3\tilde\ve},
\end{equation*}
which completes the proof of \eqref{energy:dtdx}.
\end{proof}

\begin{lemma}
Under the conditions of Theorem \ref{thm1} and assumption \eqref{BA}, we have for $t\in[0,B^2/\ve^2-1]$,
\begin{equation}\label{energyA}
\sum_{|a|\le2N-1}\|\p Z^av\|_{L^2(\cK)}\ls\ve^{3/2-4\tilde\ve}\w{t}^{1/2}.
\end{equation}
\end{lemma}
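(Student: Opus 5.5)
We need to prove $\sum_{|a|\le2N-1}\|\p Z^av\|_{L^2(\cK)}\ls\ve^{3/2-4\tilde\ve}\w{t}^{1/2}$, where $Z^a$ now contains the rotation $\Omega$. The plan is to commute the error equation \eqref{error-eqn} with $Z^a$ and run an energy estimate. The only fields that do not preserve the boundary condition are the $\Omega$'s and the spatial derivatives, so we replace them by the cut-off fields $\tilde Z$, which agree with $Z$ for $|x|\ge1$ and equal $\p_t$ near $\p\cK$. The part of $\p Z^av$ supported in $|x|\le1$ is then controlled by $\sum_{|a|\le2N}\|\p\p^av\|_{L^2(\cK_R)}$, and this is already $\ls\ve^{3/2-3\tilde\ve}$ by \eqref{energy:dtdx}.

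First we write $\tilde Z^av$ with $|a|\le2N-1$. It satisfies $\cB\tilde Z^av=0$ on $\p\cK$, because near the boundary only $\p_t$-derivatives act and $\cB\p_t^lv=0$. Its equation is
\[
\Box\tilde Z^av=\tilde Z^a(\text{quadratic terms})+\tilde Z^aR_{ap}+[\Box,\tilde Z^a]v,
\]
where the commutator $[\Box,\tilde Z^a]v$ is supported in $\{1/2\le|x|\le1\}$ and involves at most $2N$ derivatives of $v$. We multiply by $\p_t\tilde Z^av$ and integrate over $[0,t]\times\cK$. The top-order quasilinear term is handled exactly as in \eqref{energy:time4}, using the admissible condition \eqref{admiss:condition} for the Neumann case; near the boundary it is just the $\p_t^i$ estimate of Lemma~\ref{lem:energy:time}.

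The remaining contributions are bounded as follows.
\begin{itemize}
\item The source term contributes $\int_0^t\|Z^aR_{ap}\|_{L^2}ds\ls\ve^{3/2-\tilde\ve}$ by \eqref{Rapp-L2}.
\item The commutator contributes $\int_0^t\|\p\p^{\le2N}v\|_{L^2(\cK_1)}\|\p\tilde Z^av\|_{L^2}ds$. Using \eqref{energy:dtdx} only yields $\ve^{3/2-3\tilde\ve}\,t$, which is too weak.
\item The nonlinear lower-order terms $\p^2Z^bv\,\p Z^cu_{ap}$, $\p^2Z^bu_{ap}\,\p Z^cv$ and $\p^2Z^bv\,\p Z^cv$ are bounded via \eqref{app-pw} and \eqref{BA} by $\ve_1\w{s}^{-1/2}\|\p Z^{\le2N-1}v\|_{L^2}$ times the energy. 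Here we must take care that the factor carrying at most $N$ derivatives is the one placed in $L^\infty$.
\end{itemize}

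Setting $\mathcal E(t)=\sup_{s\le t}\sum_{|a|\le2N-1}\|\p\tilde Z^av(s)\|_{L^2}$, these bounds lead to
\[
\mathcal E(t)\ls\ve^{3/2-3\tilde\ve}+\int_0^t\|\p\p^{\le2N}v(s)\|_{L^2(\cK_1)}ds+\ve_1\int_0^t\frac{\mathcal E(s)}{\w{s}^{1/2}}ds,
\]
and Lemma~\ref{lem:Gronwall} together with \eqref{BA1} will close this.

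The main obstacle is the commutator integral $\int_0^t\|\p\p^{\le2N}v\|_{L^2(\cK_1)}ds$, for which we need $\ls\ve^{3/2-4\tilde\ve}\w{t}^{1/2}$. That requires pointwise-in-time local decay of high derivatives at the rate $\w{s}^{-1/2}$. We get this by interpolating \eqref{loc:energyA} (the rate $\ve^{3/2-2\tilde\ve}\w{t}^{-1/2-\tilde\ve}$, but for $\p^{\le1}v$ only) against the uniform high-norm bound \eqref{energy:dtdx}. Concretely, we apply the elliptic estimate \eqref{ellip} on $\cK_R$, and use $\Delta=\p_t^2-\Box$ with the smallness of $\Box v$, to reduce $\p_x$-derivatives to $\p_t$-derivatives. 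We then handle $\p_t^kv$ by applying \eqref{loc:ibvp} to $\p_t^ku$ with the divergence form \eqref{eqn:div}, exactly as in the proof of \eqref{loc:energyA}. Since $\p_t$ commutes with $\Box$ and preserves the boundary condition, this gives $\|\p\p^{\le2N}v\|_{L^2(\cK_R)}\ls\ve^{3/2-3\tilde\ve}\w{s}^{-1/2-\tilde\ve}$, up to losing a few derivatives that \eqref{energy:dtdx} absorbs.

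Integrating this over $[0,t]$ gives at most $\ve^{3/2-3\tilde\ve}\w{t}^{1/2}$. After Gronwall and the bound $e^{C_0}\ve^{\tilde\ve}\le1$ from \eqref{BA1}, we obtain $\ve^{3/2-4\tilde\ve}\w{t}^{1/2}$. Finally $\|\p Z^av\|_{L^2}\le\|\p\tilde Z^av\|_{L^2}+\|\p\p^{\le2N}v\|_{L^2(\cK_1)}$, which yields \eqref{energyA}.
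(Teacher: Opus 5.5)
Your cutoff strategy is sound, but the estimate of the commutator $[\Box,\tilde Z^a]v$ has a genuine gap. You bound its contribution by $\int_0^t\|\p\p^{\le2N}v\|_{L^2(\cK_1)}\|\p\tilde Z^av\|_{L^2(\cK)}ds$. You then need the top-order local decay $\|\p\p^{\le2N}v\|_{L^2(\cK_R)}\ls\ve^{3/2-3\tilde\ve}\w{s}^{-1/2-\tilde\ve}$, and that bound is not available at this stage.

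\begin{itemize}
\item Interpolating \eqref{loc:energyA} against \eqref{energy:dtdx} gives no decay at the top order, because \eqref{energy:dtdx} itself carries none.
\item Applying \eqref{loc:ibvp} to $\p_t^ku$ needs the weighted sup $\w{y}^{1/2}\cW_{1,1}|Z^{\le2}\p_t^k[(\p u)^2]|$ over all of $\cK$. That requires pointwise bounds on $\p Z^bv$, rotations included, with $|b|$ up to $k+2$, whereas \eqref{BA} only gives $|b|\le N$.
\item For $k=2N$, the term $\|\p^{\le1}\p_t^k[(\p u)^2]\|_{L^2(\cK_3)}$ also involves $2N+2$ derivatives of $u$, which is more than \eqref{energy:dtdx} or the solution class controls.
\end{itemize}

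The only source of such pointwise bounds is \eqref{Sobo:ineq} applied to $\|\p Z^bv\|_{L^2(\cK)}$, which is the estimate you are trying to prove. That is why the paper obtains local decay only afterwards, via \eqref{energyA}$\Rightarrow$\eqref{loc:energyB2}$\Rightarrow$\eqref{loc:energyB}, and even then only for $|a|\le2N-4$. Losing a few derivatives does not rescue the argument. For $|a|=2N-1$ the commutator contains exactly the top-order local derivatives, and for those you have no decay.

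The fix is that you need no decay at all. The integrand $[\Box,\tilde Z^a]v\,\p_t\tilde Z^av$ is supported in $\{1/2\le|x|\le1\}$, so the second factor is local too. Its contribution to $\|\p\tilde Z^av(t)\|_{L^2}^2$ is therefore bounded by
$$\int_0^t\|\p\p^{\le2N-1}v\|^2_{L^2(\cK_1)}ds\ls\ve^{3-6\tilde\ve}\w{t},$$
using \eqref{energy:dtdx}. Keep the energy inequality in squared form instead of dividing by $\mathcal E$ early. Then run Lemma \ref{lem:Gronwall} for $\max\{\mathcal E(s),\ve^{3/2-3\tilde\ve}\w{s}^{1/2}\}$; with \eqref{Rapp-L2} and \eqref{BA1} this gives $\ve^{3/2-4\tilde\ve}\w{t}^{1/2}$. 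The factor $\w{t}^{1/2}$ in \eqref{energyA} is precisely the square root of this quadratic, non-decaying local term.

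This is the same mechanism the paper uses. The paper works with $Z^av$ directly and bounds the boundary terms $\cB^a_1,\cB^a_2$ by the trace theorem, giving $\int_0^t\|\p\p^{\le2N}v\|^2_{L^2(\cK_2)}ds\ls\ve^{3-6\tilde\ve}(1+t)$. In your version, the cutoff commutator plays the role of those boundary terms.
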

\begin{proof}
Analogously to Lemma \ref{lem:energy:time}, one has that for any $|a|\le2N-1$,
\begin{equation}\label{energyA1}
\begin{split}
&\sum_{|a|\le2N-1}\|\p Z^av(t)\|^2_{L^2(\cK)}\ls\ve^4+\ve_1\sum_{|a|\le2N-1}\int_0^t\|\p Z^av(s)\|^2_{L^2(\cK)}\frac{ds}{\w{s}^{1/2}}\\
&\qquad+\sum_{|a|\le2N-1}\int_0^t\|\p Z^av(s)\|_{L^2(\cK)}\|Z^aR_{ap}\|_{L^2(\cK)}ds+|\cB^a_1|+|\cB^a_2|,\\
&\cB^a_1=\int_0^t\int_{\p\cK}(n_1(x),n_2(x))\cdot\nabla Z^av(s,x)\p_tZ^av(s,x)d\sigma ds,\\
&\cB^a_2=\int_0^t\int_{\p\cK}\sum_{j=1}^2\sum_{\beta,\mu=0}^2Q^{j\beta\mu}n_j(x)\p_tZ^av\p_{\beta}Z^av(\p_{\mu}v+\p_{\mu}u_{ap})d\sigma ds,
\end{split}
\end{equation}
where $d\sigma$ is the curve measure on $\p\cK$.
According to $\p\cK\subset\overline{\cK_1}$ and the trace theorem, we arrive at
\begin{equation}\label{energyA2}
\begin{split}
|\cB^a_1|&\ls\sum_{|b|\le|a|}\int_0^t\|(1-\chi_{[1,2]}(x))\p_t\p^bv\|_{L^2(\p\cK)}\|(1-\chi_{[1,2]}(x))\p_x\p^bv\|_{L^2(\p\cK)}ds\\
&\ls\sum_{|b|\le|a|}\int_0^t\|(1-\chi_{[1,2]}(x))\p \p^bv\|^2_{H^1(\cK)}ds\\
&\ls\sum_{|b|\le|a|+1}\int_0^t\|\p \p^bv\|^2_{L^2(\cK_2)}ds\ls\ve^{3-6\tilde\ve}(1+t),
\end{split}
\end{equation}
where \eqref{energy:dtdx} has been used. Analogously, $\cB^a_2$ can be also treated.
Then, \eqref{energyA1} and \eqref{energyA2} ensure that
\begin{equation*}
\begin{split}
\sum_{|a|\le2N-1}\|\p Z^av(t)\|^2_{L^2(\cK)}&\ls\ve^{3-6\tilde\ve}\w{t}
+\ve_1\sum_{|a|\le2N-1}\int_0^t\|\p Z^av(s)\|^2_{L^2(\cK)}\frac{ds}{\w{s}^{1/2}}\\
&\quad+\sum_{|a|\le2N-1}\int_0^t\|\p Z^av(s)\|_{L^2(\cK)}\|Z^aR_{ap}\|_{L^2(\cK)}ds.
\end{split}
\end{equation*}
Define $\ds\tilde E(t)=\sum_{|a|\le2N-1}\sup_{s\in[0,t]}\max\{\|\p Z^av(s)\|_{L^2(\cK)},\ve^{3/2-3\tilde\ve}\w{s}^{1/2}\}$. Then
\begin{equation}\label{energyA3}
\begin{split}
\tilde E(t)&\ls\ve^{3/2-3\tilde\ve}\w{t}^{1/2}+\ve_1\int_0^t\tilde E(s)\frac{ds}{\w{s}^{1/2}}+\sum_{|a|\le2N-1}\int_0^t\|Z^aR_{ap}\|_{L^2(\cK)}ds\\
&\ls\ve^{3/2-3\tilde\ve}\w{t}^{1/2}+\ve_1\int_0^t\tilde E(s)\frac{ds}{\w{s}^{1/2}},
\end{split}
\end{equation}
where we have used \eqref{Rapp-L2}.
Applying Lemma \ref{lem:Gronwall} to \eqref{energyA3} with \eqref{BA1} yields \eqref{energyA}.
\end{proof}

\subsection{Local energy decay estimates and improved energy estimates}

To improve the energy estimate in \eqref{energyA}, which grows rapidly in the time rate $\w{t}^{1/2}$, one needs to establish
a better estimate on the boundary term \eqref{energyA2}. This will be achieved by the following local energy decay estimates.
\begin{lemma}
Under the conditions of Theorem \ref{thm1} and assumption \eqref{BA}, it holds that for $t\in[0,\f{B^2}{\ve^2}-1]$,
\begin{equation}\label{loc:energyB}
\sum_{|a|\le2N-4}\|\p^av\|_{L^2(\cK_R)}\ls\ve^{3/2-13\tilde\ve}\w{t}^{-1/2-\tilde\ve}.
\end{equation}
\end{lemma}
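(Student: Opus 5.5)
The plan is to upgrade \eqref{loc:energyA} from $\p^{\le1}v$ to $\p^av$ with $|a|\le2N-4$. We first treat pure time derivatives $\p_t^jv$ and then recover spatial derivatives from the elliptic estimate \eqref{ellip}.

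\textbf{Step 1: pure time derivatives.} Since $\p_t$ commutes with $\Box$ and with $\cB$, the function $\p_t^ju$ satisfies the same boundary condition. Differentiating the divergence form \eqref{eqn:div} gives
\[
\Box\p_t^ju=\sum_\alpha\p_\alpha G^\alpha_j,\qquad G^\alpha_j=\p_t^j\big(Q\,\p u\,\p u\big),
\]
which is again in divergence form. We apply \eqref{loc:ibvp} with $\eta=\tilde\ve/2$, split into two regimes as in the proof of \eqref{loc:energyA}.

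For $\ve t\le1$ we use $\Box v=\Box u$ together with $v(0)=O(\ve^2)$. For $\ve t\ge2$ we apply the estimate to $u$ itself, noting that $u_{ap}$ vanishes on $\cK_R$ in that range. The transition region $1\le\ve t\le2$ is handled by combining the two.

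The right side of \eqref{loc:ibvp} has two kinds of terms.

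\emph{Local term.} This is $\ln(2+t)\sup_s\w{s}\|\p^{\le1}\p_t^j(\p u)^2\|_{L^2(\cK_3)}$. On $\cK_3$ we write $\p u=\p v+\p u_{ap}$. For $t\ge 2/\ve$ only $\p v$ survives there. Each quadratic product is bounded by an $L^\infty$ factor times an $L^2(\cK_3)$ factor. For the $L^\infty$ factor we use \eqref{BA}; near the boundary $\w{x}\sim1$, so this gives $\ve_1\w{t}^{2\tilde\ve-1}$. For the $L^2$ factor we use \eqref{energy:dtdx}. The product is $\w{s}\cdot\ve_1\w{s}^{2\tilde\ve-1}\ve^{3/2-3\tilde\ve}$, which is bounded by $\ve^{3/2-4\tilde\ve}\w{s}^{2\tilde\ve}$.

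\emph{Far term.} This is $\w{t}^{\tilde\ve}\ln^2(2+t)\sup\w{y}^{1/2}\cW_{1,1}|Z^{\le2}G^\alpha_j|$. We use \eqref{BA} and \eqref{app-pw} for $|a|\le N$, which requires $j+3\le N$. This reproduces the bound in \eqref{loc:energyA3}: $\ve\w{t}^{\tilde\ve}+\ve^{1.4}\ve_1\w{t}^{1/2+3\tilde\ve}+\ve^2\w{t}^{1/2+\tilde\ve}$.

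Dividing by $\w{t}$ and using $\w{t}\le B^2\ve^{-2}$, the term $\ve\w{t}^{\tilde\ve-1}$ is at most $\ve^{3/2}\w{t}^{-1/2+\tilde\ve}$. The remaining terms are already of the form $\ve^{3/2-}\w{t}^{-1/2+3\tilde\ve}$. Altogether this gives
\[
\|\p^{\le1}\p_t^jv\|_{L^2(\cK_R)}\ls\ve^{3/2-c\tilde\ve}\w{t}^{-1/2+3\tilde\ve}.
\]
To convert $\w{t}^{+3\tilde\ve}$ into $\w{t}^{-\tilde\ve}$ we trade powers using $\w{t}^{4\tilde\ve}\le\ve^{-8\tilde\ve}$. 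This is exactly why the loss grows to $13\tilde\ve$.

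\textbf{Step 2: spatial derivatives.} We bound $\|\p_t^k\p_x^av\|_{L^2(\cK_R)}$ by the elliptic estimate \eqref{ellip}, localized with a cutoff $\chi$ supported in $\cK_{R+1}$ (the commutator contributes lower-order terms on a slightly larger ball). We replace $\Delta\p_t^kv$ by $\p_t^{k+2}v-\p_t^k\Box v$. By \eqref{error-eqn}, $\p_t^k\Box v$ is
\begin{itemize}
\item $\ve_1$ times local derivatives of $v$, which are absorbed, plus
\item $\p_t^kR_{ap}$, which vanishes on $\cK_{R+1}$ for $\ve t\ge2$ and equals $O(\ve^2\w{t}^{-1})$ there for $\ve t\le2$.
\end{itemize}
Inducting on the number of spatial derivatives, each step costs two time derivatives. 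This is consistent with the index budget $|a|\le2N-4$ for the localized bounds, while the pointwise \eqref{BA} inputs only involve at most $N$ derivatives.

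\textbf{Main obstacle.} The hardest step is keeping the local term in \eqref{loc:ibvp} at rate $\w{t}^{-1/2-}$. This requires pairing the $L^\infty$ decay $\w{t}^{-1+2\tilde\ve}$ near the obstacle from \eqref{BA} with an $L^2(\cK_3)$ bound at high order. If \eqref{energy:dtdx} alone is too weak at some order, the fallback is to bootstrap within the lemma. We would first prove the decay for low $j$, then feed the improved local $L^2$ decay into the local term at the next order, exactly as \eqref{loc:energyA} was used to obtain \eqref{energy:dtdx}.
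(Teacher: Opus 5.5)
\textbf{There is a genuine gap: your argument does not reach the full derivative range $|a|\le 2N-4$.}

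To get $|a|=2N-4$ you need Step 1 for every $j\le 2N-5$. The far term of \eqref{loc:ibvp} then involves $Z^{\le 2}\p_t^j(\p u\,\p u)$. That requires pointwise bounds on $\p Z^b u$ with $|b|$ up to $j+2=2N-3$ on all of $\R^2\setminus\cK_2$. Assumption \eqref{BA} only covers $|b|\le N$, which is exactly why you had to impose $j+3\le N$.

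Step 2 does not repair this. The elliptic estimate \eqref{ellip}, combined with $\Delta=\p_t^2-\Box$, only trades two spatial derivatives for two time derivatives. The total order is preserved, so the top-order local bound still rests on $\|\p^{\le1}\p_t^{2N-5}v\|_{L^2(\cK_R)}$. Your argument therefore proves \eqref{loc:energyB} only for $|a|\lesssim N-2$.

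That is not enough for the rest of the paper. The boundary term \eqref{energyB2} uses the bound up to total order $2N-4$. The chain $2N-4\to2N-6\to2N-8\to2N-19$, with $2N-19\ge N$, is what closes the bootstrap. Also, the local term you flag as the main obstacle is harmless at every order: put the low-order factor in \eqref{BA} and the high-order factor in \eqref{energy:dtdx}. Your fallback of feeding local $L^2$ decay back in cannot help with the far term, because that term needs pointwise control in the whole exterior region.

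\textbf{The missing ingredient is \eqref{loc:energyB2}.} Applying \eqref{Sobo:ineq} to the growing energy bound \eqref{energyA} gives $\sum_{|a|\le 2N-3}|\p Z^a v|\ls\ve^{3/2-4\tilde\ve}\w{x}^{-1/2}\w{t}^{1/2}$. This is weak but available at high order.

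In each product, put the high-order factor in this bound and the low-order factor (at most $N-2$ derivatives) in \eqref{BA}. Since $\w{y}^{1/2}\cW_{1,1}(s,y)\w{y}^{-1}\w{s-|y|}^{-1}\ls\w{s}^{1/2}$, the far term becomes $\ve^{3/2-4\tilde\ve}\ve_1\w{t}^{1+3\tilde\ve}$. After dividing by $\w{t}$ this is $C_0\ve^{5/2-4\tilde\ve}\w{t}^{3\tilde\ve}$, which does not decay at all.

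It is converted into $\ve^{3/2-13\tilde\ve}\w{t}^{-1/2-\tilde\ve}$ only by spending the lifespan restriction $\w{t}^{1/2+4\tilde\ve}\ls\ve^{-1-8\tilde\ve}$, together with $C_0\ve_0^{\tilde\ve}\le 1$ from \eqref{BA1}. That is the actual origin of the $13\tilde\ve$ loss. With this replacement in Step 1, in both regimes $\ve t\le1$ and $\ve t\ge1$, your Step 2 goes through essentially as you wrote it.
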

\begin{proof}
Although the proof of \eqref{loc:energyB} is similar to that of \eqref{loc:energyA}, we still give the details for
completeness.
Applying $\p_t^j$ to \eqref{eqn:div} gives
\begin{equation}\label{loc:energyB1}
\Box\p_t^ju=-\frac12\sum_{\alpha,\beta,\mu=0}^2[\p_{\alpha}\p_t^j(Q^{\alpha\beta\mu}\p_{\beta}u\p_{\mu}u)
+\p_{\beta}\p_t^j(Q^{\alpha\beta\mu}\p_{\alpha}u\p_{\mu}u)
-\p_{\mu}\p_t^j(Q^{\alpha\beta\mu}\p_{\alpha}u\p_{\beta}u)].
\end{equation}
On the other hand, \eqref{Sobo:ineq} and \eqref{energyA} lead to
\begin{equation}\label{loc:energyB2}
\sum_{|a|\le2N-3}|\p Z^av|\ls\ve^{3/2-4\tilde\ve}\w{x}^{-1/2}\w{t}^{1/2}.
\end{equation}
When $\ve t\le1$, one has $\Box u_{ap}\equiv0$.
Utilizing \eqref{loc:ibvp} to $\Box\p_t^jv=\Box\p_t^ju$ in \eqref{loc:energyB1} with $j\le2N-5$ and $\eta=\tilde\ve/2$ leads to
\begin{equation}\label{loc:energyB3}
\begin{split}
\w{t}\|\p^{\le1}\p_t^jv\|_{L^2(\cK_{R_1})}&\ls\ve^2\w{t}^{\tilde\ve}
+\ln(2+t)\sum_{|a|\le1}\sup_{s\in[0,t]}\w{s}\|\p^a\p_t^j[(\p u)^2]\|_{L^2(\cK_3)}\\
&\quad+\w{t}^{\tilde\ve}\sum_{|a|\le2}\sup_{(s,y)\in[0,t]\times\cK}\w{y}^{1/2}\cW_{1,1}(s,y)|Z^a\p_t^j[(\p u)^2]|\\
&\ls\ve^2\w{t}^{\tilde\ve}+\ve^{3/2-4\tilde\ve}\ve_1\w{t}^{1+3\tilde\ve}+\ve^2\w{t}^{1/2+\tilde\ve}\\
&\ls\ve^{3/2}\w{t}^{1/2-\tilde\ve},
\end{split}
\end{equation}
where we have used \eqref{app-pw}, \eqref{BA}, \eqref{energy:dtdx} and \eqref{loc:energyB2}.

When $\ve t\ge1$, applying \eqref{loc:ibvp} to $\Box\p_t^ju$ in \eqref{loc:energyB1} with $\eta=\tilde\ve/2$ yields
\begin{equation*}
\w{t}\|\p^{\le1}\p_t^ju\|_{L^2(\cK_{R_1})}\ls\ve\w{t}^{\tilde\ve}+\ve^{3/2-4\tilde\ve}\ve_1\w{t}^{1+3\tilde\ve}+\ve^2\w{t}^{1/2+\tilde\ve}.
\end{equation*}
It is noticed that when $\ve t\ge2$, $u_{ap}=\ve\chi(-3\ve\sigma)r^{-1/2}U(\tau,\sigma,\theta)$ vanishes for $|x|\le R_1$ and small $\ve$.
Thus, by \eqref{app-pw}, we obtain that for $t\le\frac{B^2}{\ve^2}-1$,
\begin{equation}\label{loc:energyB4}
\begin{split}
\|\p^{\le1}\p_t^jv\|_{L^2(\cK_{R_1})}&\ls\ve\w{t}^{\tilde\ve-1}+\ve^{5/2-5\tilde\ve}C_0\ve_0^{\tilde\ve}\w{t}^{3\tilde\ve}+\ve^2\w{t}^{\tilde\ve-1/2}\\
&\ls\ve^{3/2-13\tilde\ve}\w{t}^{-1/2-\tilde\ve}.
\end{split}
\end{equation}
For $j=0,1,2,\cdots,2N-4$, denote $\ds E_j^{loc}(t):=\sum_{k\le j}\|\p_t^k\p_x^{2N-4-j}v\|_{L^2(\cK_{R+j})}$.
Then it follows from \eqref{loc:energyB4} that for $t\le\frac{B^2}{\ve^2}-1$,
\begin{equation}\label{loc:energyB5}
E_{2N-4}^{loc}(t)+E_{2N-5}^{loc}(t)\ls\ve^{3/2-13\tilde\ve}\w{t}^{-1/2-\tilde\ve}.
\end{equation}
When $j\le2N-6$, we have $\p_x^{2N-4-j}=\p_x^2\p_x^{2N-6-j}$.
Thus, one can apply the elliptic estimate \eqref{ellip} to $(1-\chi_{[R+j,R+j+1]})\p_t^{k}v$
to obtain
\begin{equation}\label{loc:energyB6}
\begin{split}
E_j^{loc}(t)&\ls\sum_{k\le j}\|\p_x^{2N-4-j}(1-\chi_{[R+j,R+j+1]})\p_t^kv\|_{L^2(\cK)}\\
&\ls\sum_{k\le j}[\|\Delta(1-\chi_{[R+j,R+j+1]})\p_t^kv\|_{H^{2N-6-j}(\cK)}\\
&\quad+\sum_{k\le j}\|(1-\chi_{[R+j,R+j+1]})\p_t^kv\|_{H^{2N-5-j}(\cK)}]\\
&\ls\sum_{k\le j}\|(1-\chi_{[R+j,R+j+1]})\Delta\p_t^kv\|_{H^{2N-6-j}(\cK)}+\sum_{j+1\le l\le2N-4}E_l^{loc}(t)\\
&\ls\sum_{k\le j}[\|\Box\p_t^kv\|_{H^{2N-6-j}(\cK_{R+j+1})}+\|\p_t^{k+2}v\|_{H^{2N-6-j}(\cK_{R+j+1})}]\\
&\quad+\sum_{j+1\le l\le2N-4}E_l^{loc}(t),
\end{split}
\end{equation}
where the fact of $\Delta=\p_t^2-\Box$ has been used.
Note that when $\ve t\ge2$, $u_{ap}=\ve\chi(-3\ve\sigma)r^{-1/2}U(\tau,\sigma,\theta)$ vanishes for $|x|\le R+2N$ and small $\ve$.
From \eqref{error-eqn}, \eqref{app-pw}, \eqref{Rapp-L2}, \eqref{BA} and \eqref{energy:dtdx}, one can get that for $j\le2N-6$,
\begin{equation}\label{loc:energyB7}
\sum_{k\le j}\|\Box\p_t^kv\|_{H^{2N-6-j}(\cK_{R+j+1})}\ls\ve^{3/2-13\tilde\ve}\w{t}^{-1/2-\tilde\ve}.
\end{equation}
Thus, \eqref{loc:energyB5}, \eqref{loc:energyB6} and \eqref{loc:energyB7} imply that for $j\le2N-6$,
\begin{equation*}
E_j^{loc}(t)\ls\ve^{3/2-13\tilde\ve}\w{t}^{-1/2-\tilde\ve}+\sum_{j+1\le l\le2N-4}E_l^{loc}(t).
\end{equation*}
This, together with \eqref{loc:energyB5}, yields \eqref{loc:energyB}.
\end{proof}

\begin{lemma}\label{YH-10}
Under the conditions of Theorem \ref{thm1} and assumption \eqref{BA}, we have for $t\in[0,B^2/\ve^2-1]$,
\begin{equation}\label{energyB}
\sum_{|a|\le2N-6}\|\p Z^av\|_{L^2(\cK)}\ls\ve^{3/2-14\tilde\ve}.
\end{equation}
\end{lemma}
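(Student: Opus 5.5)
We redo the energy argument of \eqref{energyA}, but now for $|a|\le 2N-6$, and feed in the improved local energy decay \eqref{loc:energyB} at the boundary terms instead of the crude bound \eqref{energy:dtdx}. Concretely, for $|a|\le2N-6$ we apply $Z^a$ to \eqref{error-eqn}, multiply by $\p_tZ^av$ and integrate over $[0,t]\times\cK$, exactly as in \eqref{energyA1}. This gives
\begin{equation*}
\sum_{|a|\le2N-6}\|\p Z^av(t)\|^2_{L^2(\cK)}\ls\ve^4+\ve_1\sum_{|a|\le2N-6}\int_0^t\|\p Z^av(s)\|^2_{L^2(\cK)}\frac{ds}{\w{s}^{1/2}}+\sum_{|a|\le2N-6}\int_0^t\|\p Z^av\|_{L^2}\|Z^aR_{ap}\|_{L^2}ds+|\cB^a_1|+|\cB^a_2|.
\end{equation*}
The interior quadratic terms are controlled by \eqref{app-pw} and \eqref{BA}, which give the factor $\ve_1\w{s}^{-1/2}$ from $|\p^2u_{ap}|+|\p^2 v|$. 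The error terms coming from commuting $\Omega$ and $\p$ with the nonlinearity are of the same form.

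The key step is the bound on the boundary terms. By the trace theorem and the cutoff $1-\chi_{[1,2]}$, as in \eqref{energyA2},
\begin{equation*}
|\cB^a_1|\ls\sum_{|b|\le|a|+1}\int_0^t\|\p\p^bv(s)\|^2_{L^2(\cK_2)}ds.
\end{equation*}
Since $|b|+1\le2N-4$, we may now use \eqref{loc:energyB} instead of \eqref{energy:dtdx}. This gives
\begin{equation*}
|\cB^a_1|\ls\ve^{3-26\tilde\ve}\int_0^t\w{s}^{-1-2\tilde\ve}ds\ls\ve^{3-26\tilde\ve},
\end{equation*}
which is uniformly bounded in $t$. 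The term $\cB^a_2$ carries the extra factor $|\p v|+|\p u_{ap}|\ls\ve$ on $\p\cK$ by \eqref{BA} and \eqref{app-pw}, so it is bounded in the same way. It is also here that the admissible condition \eqref{admiss:condition} is needed, in the Neumann case, exactly as in Lemma \ref{lem:energy:time}.

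For the source term we use \eqref{Rapp-L2}, namely $\int_0^t\|Z^aR_{ap}\|_{L^2}ds\ls\ve^{3/2-\tilde\ve}$. Setting $\hat E(t)=\sup_{s\le t}\sum_{|a|\le2N-6}\|\p Z^av(s)\|_{L^2(\cK)}$ and dividing by $\hat E$ (or using $ab\le$ Young), we get
\begin{equation*}
\hat E(t)\ls\ve^{3/2-13\tilde\ve}+\ve_1\int_0^t\hat E(s)\frac{ds}{\w{s}^{1/2}}.
\end{equation*}
Lemma \ref{lem:Gronwall} then gives $\hat E(t)\ls\ve^{3/2-13\tilde\ve}e^{2\ve_1\w{t}^{1/2}}$. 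Since $t\le B^2/\ve^2$, we have $\ve_1\w{t}^{1/2}\ls C_0B$, so the exponential is bounded by $e^{CC_0}$. By \eqref{BA1} this factor is absorbed into $\ve^{-\tilde\ve}$, which yields \eqref{energyB}.

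The main obstacle is to check that the interior quadratic terms really carry a factor $\ve_1\w{s}^{-1/2}$ with no loss. The pointwise bound \eqref{BA} for $\p^2 v$ near the light cone is only $\ve_1\w{x}^{-1/2}\w{t}^{2\tilde\ve}$, which is slightly worse than $\w{s}^{-1/2}$. If this loss occurs, I would instead use the first line of \eqref{BA}, which gives $\ve^{1.4}\w{s}^{-1/2}$. Integrating that over $s\le B^2/\ve^2$ gives a total contribution of at most $\ve^{0.4}$, which is harmless in the Gronwall step. The same remedy was already used implicitly in Lemma \ref{lem:energy:time}, so the argument should go through there as well.
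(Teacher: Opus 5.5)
Your proposal is correct and is essentially the paper's proof. You rerun the $Z^a$-energy identity at level $|a|\le 2N-6$ and bound the boundary terms $\cB^a_1,\cB^a_2$ by the trace theorem together with the local energy decay \eqref{loc:energyB}, whose square $\ve^{3-26\tilde\ve}\w{s}^{-1-2\tilde\ve}$ is integrable in time; this replaces \eqref{energy:dtdx}, and you then close with Lemma \ref{lem:Gronwall}.

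One small correction: $\cB^a_2$ is controlled only by that trace bound, not by \eqref{admiss:condition}. Once $Z^a$ contains spatial fields, $Z^av$ no longer satisfies $\cB Z^av=0$ on $\p\cK$, so the admissible condition enters only upstream, through Lemma \ref{lem:energy:time}.
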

\begin{proof}
Analogously to \eqref{energyA1} and \eqref{energyA2}, one has that for any $|a|\le2N-6$,
\begin{equation}\label{energyB1}
\begin{split}
&\sum_{|a|\le2N-1}\|\p Z^av(t)\|^2_{L^2(\cK)}\ls\ve^4+\ve_1\sum_{|a|\le2N-1}\int_0^t\|\p Z^av(s)\|^2_{L^2(\cK)}\frac{ds}{\w{s}^{1/2}}\\
&\qquad+\sum_{|a|\le2N-1}\int_0^t\|\p Z^av(s)\|_{L^2(\cK)}\|Z^aR_{ap}\|_{L^2(\cK)}ds+|\cB^a_1|+|\cB^a_2|
\end{split}
\end{equation}
and
\begin{equation}\label{energyB2}
|\cB^a_1|\ls\sum_{|b|\le|a|+1}\int_0^t\|\p \p^bv\|^2_{L^2(\cK_2)}ds
\ls\int_0^t\ve^{3-26\tilde\ve}(1+s)^{-1-2\tilde\ve}ds
\ls\ve^{3-26\tilde\ve},
\end{equation}
where \eqref{loc:energyB} has been used.
Similarly, $\cB^a_2$ can be also treated.
Then collecting \eqref{energyB1} and \eqref{energyB2} yields
\begin{equation*}
\begin{split}
&\sum_{|a|\le2N-6}\|\p Z^av(t)\|^2_{L^2(\cK)}\ls\ve^{3-26\tilde\ve}+\ve_1\sum_{|a|\le2N-1}\int_0^t\|\p Z^av(s)\|^2_{L^2(\cK)}\frac{ds}{\w{s}^{1/2}}\\
&\qquad+\sum_{|a|\le2N-1}\int_0^t\|\p Z^av(s)\|_{L^2(\cK)}\|Z^aR_{ap}\|_{L^2(\cK)}ds,
\end{split}
\end{equation*}
which implies \eqref{energyB}.
\end{proof}

\subsection{Improved pointwise estimates and proof of Theorem \ref{thm1}}

\begin{lemma}
Under the conditions of Theorem \ref{thm1} and assumption \eqref{BA}, it holds that for $t\in[0,\f{B^2}{\ve^2}-1]$,
\begin{equation}\label{loc:energyC}
\sum_{|a|\le2N-9}\|\p^av\|_{L^2(\cK_R)}\ls\ve\w{t}^{\tilde\ve-1}.
\end{equation}
\end{lemma}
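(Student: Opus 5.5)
We follow the proof of \eqref{loc:energyB} (itself modeled on \eqref{loc:energyA}), now using the improved energy bound \eqref{energyB} in place of the crude growing bound \eqref{energyA}. The target decay $\ve\w{t}^{\tilde\ve-1}$ is exactly what \eqref{loc:ibvp} gives once its right-hand side is $O(\ve\w{t}^{\tilde\ve})$.

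\textbf{Pointwise input.} Combining \eqref{energyB} with the Sobolev inequality \eqref{Sobo:ineq} gives
\[
\sum_{|a|\le 2N-8}|\p Z^av|\ls\ve^{3/2-14\tilde\ve}\w{x}^{-1/2},
\]
with no growth in $t$.

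\textbf{Low time derivatives.} For $j\le 2N-10$, apply \eqref{loc:ibvp} with $\eta=\tilde\ve/2$ to $\Box\p_t^ju$, written in the divergence form \eqref{loc:energyB1}, so that $G^\alpha$ is a sum of $\p_t^j(\p u\,\p u)$.

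\emph{Initial data.} These terms contribute $\ve\w{t}^{\tilde\ve}$.

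\emph{Local term.} This is $\ln(2+t)\sup_s\w{s}\|\p^{\le1}\p_t^j[(\p u)^2]\|_{L^2(\cK_3)}$. Split $u=v+u_{ap}$.
\begin{itemize}
\item For $\ve t\ge 2$, $u_{ap}$ vanishes near the obstacle, so only $v$ enters. Use the pointwise bound for one factor and \eqref{loc:energyB} for the other. This gives $\w{s}\cdot\ve^{3/2-14\tilde\ve}\cdot\ve^{3/2-13\tilde\ve}\w{s}^{-1/2-\tilde\ve}$.
\item The resulting $\w{s}^{1/2}$ growth is absorbed using $s\le B^2/\ve^2$: the total is $\ls\ve^{3-27\tilde\ve}\ve^{-1}\ls\ve$.
\item For $\ve t\le 2$, the $u_{ap}$ part is $\ls\ve^2\w{s}^{-1}$ near the boundary by \eqref{app-pw} and \eqref{LWE-decay}. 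This gives $\ln(2+t)\,\ve^2\ls\ve\w{t}^{\tilde\ve}$.
\end{itemize}

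\emph{Weighted sup term.} This is $\w{t}^{\tilde\ve}\ln^2(2+t)\sup\w{y}^{1/2}\cW_{1,1}|Z^{\le2}\p_t^j[(\p u)^2]|$. The products are handled as follows.
\begin{itemize}
\item $u_{ap}\times u_{ap}$: \eqref{app-pw} gives $\ve^2\w{y}^{-1}\w{s-|y|}^{-2}$, so the weighted quantity is $\ls\ve^2\w{s+|y|}\w{y}^{-1/2}\w{s-|y|}^{-1}$. Near the cone this is $\ve^2\w{s}^{1/2}\ls\ve$, using $s\ls\ve^{-2}$.
\item $v\times u$: combine the second bootstrap bound in \eqref{BA} with the improved pointwise bound. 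This gives $\ve^{3/2-14\tilde\ve}\cdot\ve_1\w{s}^{1/2+2\tilde\ve}\ls C_0\ve^{1-15\tilde\ve}\cdot\ve^{1/2}$, which is $\ls\ve$ since $\ve^{1/2-15\tilde\ve}C_0\le 1$ by \eqref{BA1}.
\end{itemize}
Altogether $\w{t}\|\p^{\le1}\p_t^jv\|_{L^2(\cK_R)}\ls\ve\w{t}^{\tilde\ve}$ up to logarithms, which are absorbed into $\w{t}^{\tilde\ve}$ after adjusting $\eta$. For $\ve t\le2$, run the argument on $\Box v$ instead, since $\Box u_{ap}=0$ there and the data are $O(\ve^2)$.

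\textbf{Space derivatives.} Recover spatial derivatives with the same elliptic iteration as in \eqref{loc:energyB5}--\eqref{loc:energyB7}, using the quantities $E_j^{loc}$ and \eqref{ellip}. This requires bounding $\|\Box\p_t^kv\|_{H^m(\cK_{R'})}$ by $\ve\w{t}^{\tilde\ve-1}$.
\begin{itemize}
\item For $\ve t\ge2$, this norm is quadratic in $v$ near the obstacle and $R_{ap}$ vanishes there. It is bounded by the pointwise estimate times \eqref{loc:energyB}, namely $\ve^{3-27\tilde\ve}\w{t}^{-1/2-\tilde\ve}$, which is $\ls\ve\w{t}^{-1}$ since $\w{t}^{1/2}\ls\ve^{-1}$.
\item For $\ve t\le 2$, use \eqref{error-eqn} with \eqref{app-pw}. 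The terms are of size $\ve^2\w{t}^{-1}$, together with $\ve\cdot\|v\|_{loc}$, which is absorbed.
\end{itemize}

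The main obstacle is the bookkeeping of powers of $\ve$ against $\w{t}\le B^2\ve^{-2}$. Each $\w{t}^{1/2}$ loss must be paid for by an extra $\ve^{1/2}$ gain from \eqref{energyB} or \eqref{loc:energyB}, and a margin of order $\ve^{1/2-C\tilde\ve}$ must remain to absorb $C_0$ via \eqref{BA1}. The derivative counts ($2N-9$ versus $2N-8$ and $2N-4$) must also be kept consistent.
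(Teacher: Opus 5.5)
Your proposal is essentially the paper's proof. It derives the pointwise bound \eqref{loc:energyC1} from \eqref{energyB} and \eqref{Sobo:ineq}, applies \eqref{loc:ibvp} with $\eta=\tilde\ve/2$ to $\Box\p_t^ju$ in the divergence form \eqref{loc:energyB1} for $j\le 2N-10$, uses $\w{t}^{1/2}\ls\ve^{-1}$ to reduce the right-hand side to $\ve\w{t}^{\tilde\ve}$, and finishes with the elliptic iteration from the proof of \eqref{loc:energyB}.

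One small correction concerns the passage from $u$ to $v$. $\Box u_{ap}\equiv0$ only for $\ve t\le 1$, not $\ve t\le2$, so for $1<\ve t\le2$ the source $\Box v$ is not in the pure divergence form that \eqref{loc:ibvp} requires. Instead, keep the estimate for $u$ for all $t$ and subtract $u_{ap}$. Near $\p\cK$, $u_{ap}$ equals $\ve\chi(\ve t)w_{B}$ and vanishes for $\ve t\ge2$, so \eqref{loc:decay} gives $\|\p^{\le1}\p_t^ju_{ap}\|_{L^2(\cK_R)}\ls\ve\w{t}^{-1}$. This is harmless because the target bound is only of size $\ve$.
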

\begin{proof}
The proof of \eqref{loc:energyC} is actually  much easier than that of \eqref{loc:energyB}.
Note that \eqref{Sobo:ineq} and \eqref{energyB} lead to
\begin{equation}\label{loc:energyC1}
\sum_{|a|\le2N-8}|\p Z^av|\ls\ve^{3/2-14\tilde\ve}\w{x}^{-1/2}.
\end{equation}
Applying \eqref{loc:ibvp} to $\Box\p_t^ju$ in \eqref{loc:energyB1} with $j\le2N-10$, $\eta=\tilde\ve/2$ and \eqref{loc:energyC1} instead of \eqref{loc:energyB2} implies
\begin{equation*}
\begin{split}
\w{t}\|\p^{\le1}\p_t^ju\|_{L^2(\cK_{R_1})}&\ls\ve\w{t}^{\tilde\ve}+\ve^{3/2-14\tilde\ve}\ve_1\w{t}^{1/2+3\tilde\ve}+\ve^2\w{t}^{1/2+\tilde\ve}.
\end{split}
\end{equation*}
This, together with an analogous argument in the proof of \eqref{loc:energyB}, yields the proof of \eqref{loc:energyC}.
\end{proof}

\begin{lemma}
Under the conditions of Theorem \ref{thm1} and assumption \eqref{BA}, it holds that for $t\in[0,\f{B^2}{\ve^2}-1]$,
\begin{equation}\label{BA:impr}
\sum_{|a|\le2N-19}|\p Z^av|\ls\ve\w{x}^{-1/2}\w{t-|x|}^{-1}\w{t}^{2\tilde\ve}.
\end{equation}
\end{lemma}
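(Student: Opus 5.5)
The plan is to apply the pointwise estimate \eqref{dpw:ibvp} to $w=Z^av$ with $|a|\le 2N-19$, using the divergence structure of the equation. In the exterior region we replace $Z$ by $\tilde Z$, so that $\tilde Z^a v$ still satisfies $\cB(\tilde Z^av)=0$ on $\p\cK$. The commutator terms $[\Box,\tilde Z^a]v$ are supported in $\{|x|\le1\}$. Their contribution enters only through the local $L^2(\cK_3)$ norms in \eqref{dpw:ibvp}, and it is controlled by \eqref{loc:energyC}, which gives $\ve\w{t}^{\tilde\ve-1}$ for up to $2N-9$ derivatives. That is more than the $|a|+9\le 2N-10$ derivatives that \eqref{dpw:ibvp} consumes, and this bookkeeping is why the index is restricted to $|a|\le 2N-19$. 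Multiplied by $\w{s}\ln(2+t)$, this piece gives $\ve\w{t}^{2\tilde\ve}$.

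Next, write $\Box v=\Box u-\Box u_{ap}$. Using \eqref{eqn:div}, we have $\Box u=\sum_\alpha\p_\alpha G^\alpha$ with $G^\alpha$ quadratic in $\p u=\p v+\p u_{ap}$. We also set $G^r=R_{ap}$ together with the $\Box u_{ap}$ part, or equivalently we treat $-\Box u_{ap}-Q(\p u_{ap},\p^2u_{ap})=R_{ap}$ as $G^r$ and the remaining terms in divergence form.

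The terms are then estimated as follows.
\begin{itemize}
\item For $G^\alpha$, the weighted term $\w{t}^\eta\ln^2(2+t)\sup\w{y}^{1/2}\cW_{1,1}|Z^{\le 10}G^\alpha|$ is handled by combining \eqref{app-pw} and \eqref{loc:energyC1} (the bound $\ve^{3/2-14\tilde\ve}\w{x}^{-1/2}$) with the bootstrap bound \eqref{BA}. The key point is that $|\p Z u|^2\ls \ve^2\w{x}^{-1}\w{t-|x|}^{-2}\w{t}^{4\tilde\ve}$ plus cross terms. The weight $\w{y}^{1/2}\w{s+|y|}\min\{\w{y},\w{s-|y|}\}$ times $\w{y}^{-1}\w{s-|y|}^{-1}$ is bounded by $\w{y}^{-1/2}\w{s+|y|}$, which is comparable to $\w{s}^{1/2}$ near the light cone. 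With $\ve^2\w{s}^{1/2}\le \ve B$ for $s\le B^2/\ve^2$, this term gives $O(\ve)$ times a small power of $t$.
\item The $v$--$v$ and $v$--$u_{ap}$ terms gain further factors of $\ve^{1/2-}$ through \eqref{loc:energyC1}, so they are $\ls \ve_1\ve^{1/2-}\w{t}^{\tilde\ve}$, which is $\le\ve$ for small $\ve_0$ thanks to \eqref{BA1}.
\item The term $G^r=R_{ap}$ is bounded by the pointwise expression in the proof of Lemma~\ref{Lem4.4}: $\ve^2t^{-2}\w{\sigma}^{-1}+\ve^3t^{-1}\ln\frac1\ve+\ve t^{-5/2}\ln(2+|\sigma|)$, supported near the cone. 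Times $\w{y}^{1/2}\cW_{3/2+\eta,1}$ this gives at most $\ve^2\w{t}^{\eta}+\ve^3\w{t}^{1+\eta}\ln\frac1\ve\ls \ve\w{t}^{2\tilde\ve}$, using $t\le B^2\ve^{-2}$. For $t\le2/\ve$ we use the analogous bounds from the other two regimes.
\item The data terms are $O(\ve^2)$.
\end{itemize}

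The main obstacle is the $u_{ap}$--$u_{ap}$ quadratic part of $G^\alpha$. Estimated naively it yields $\ve^2\w{t}^{1/2}\sim\ve B$, which is only $O(\ve)$ and not small, and this is exactly why the bootstrap constant $\ve_1=C_0\ve$ is chosen with $C_0$ large. The aim is a bound $\ls C_B\ve\w{x}^{-1/2}\w{t-|x|}^{-1}\w{t}^{2\tilde\ve}$ with $C_B$ independent of $C_0$; the $\w{t}^{\eta}\ln^2$ losses are absorbed into $\w{t}^{2\tilde\ve}$ with $\eta=\tilde\ve/2$. Better still, we would subtract $\Box u_{ap}$ exactly so that only $R_{ap}$, which is already $O(\ve^2)$ with good decay, remains, and the $\p u_{ap}\p v$ cross terms then carry a $\ve_1\ve^{1/2-}$ smallness.

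Finally, converting from $\tilde Z$ back to $Z$ near the boundary uses \eqref{loc:energyC} and Sobolev embedding on $\cK_R$. Combining all of the above gives \eqref{BA:impr}.
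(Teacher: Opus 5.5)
\textbf{There is a genuine gap: the non-divergence source term.} Applying \eqref{dpw:ibvp} to $\tilde Z^av$ forces you to bound a non-divergence source in the strong weight $\w{y}^{1/2}\cW_{3/2+\eta,1}$. With the estimates in the paper, that bound is not $O(\ve)$. Your two phrasings of the decomposition are not equivalent, and both fail.

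\emph{First phrasing.} Suppose $G^\alpha$ keeps all of $Q(\p u,\p^2u)$ and $G^r=-\Box u_{ap}$. Near the cone $\Box u_{ap}\approx -Q_{1st}(\tilde\omega)\ve^2r^{-1}\p_\sigma U\p_\sigma^2U$, so its weighted size is about $\ve^2\w{t}^{1+\eta}$. That is $O(1)$ at $t\approx B^2/\ve^2$.

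\emph{Second phrasing.} Suppose $G^r=R_{ap}$. Your computation drops the factor $\min\{\w{y},\w{s-|y|}\}$ in $\cW_{3/2+\eta,1}$. For $s\ge 2/\ve$ the support of $R_{ap}$ is $\{-\frac{2}{3\ve}\le\sigma\le M_0\}$, where $|y|\ge 2s/3$. There the weight is about $s^{2+\eta}\w{\sigma}$, and $\w\sigma$ can be as large as $\ve^{-1}$. With the bounds from the proof of Lemma \ref{Lem4.4}:
\begin{itemize}
\item The $O(\ve^3t^{-1}\ln\frac1\ve)$ piece gives up to $\ve^2\w{t}^{1+\eta}\ln\frac1\ve$, i.e.\ about $B^2\w{t}^{\eta}\ln\frac1\ve$ at $t\approx B^2/\ve^2$.
\item The $O(\ve t^{-5/2}\ln(2+|\sigma|))$ piece, which you omitted, gives about $\w{t}^{-1/2+\eta}\ln\frac1\ve$, i.e.\ $\ve^{1/2-\eta}\ln\frac1\ve$ at $t\approx 2/\ve$.
\item In the layer $1/\ve\le t\le 2/\ve$, $R_{ap}$ also contains $-2\ve^2\chi'(\ve t)\p_tw_B$ on $\{\sigma\le-\frac{2}{3\ve}\}$, where the $U$-part of $u_{ap}$ vanishes. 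At $|y|\approx t/3$, \eqref{LWE-decay} only gives a weighted size of about $\ve^{1/2-\eta}$.
\end{itemize}
None of these is $\ls\ve\w{t}^{2\tilde\ve}$. The paper never estimates $R_{ap}$ pointwise; Lemma \ref{Lem4.4} controls it only in $L^2$, for the energy estimates.

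\textbf{The missing idea: apply \eqref{dpw:ibvp} to $\tilde Z^au$, not to $\tilde Z^av$.} By \eqref{BA:impr1}, $\Box Z^au$ is exactly in divergence form. So $G^\alpha=\sum_{\tilde a+\tilde b\le a}|Z^{\tilde a}\p u\,Z^{\tilde b}\p u|$, and the only non-divergence term is $G^r=\Box(\tilde Z^a-Z^a)u$, supported in $\{|x|\le1\}$. Hence the $\cW_{3/2+\eta,1}$ term in \eqref{dpw:ibvp} vanishes, and $G^r$ enters only through the local norms controlled by \eqref{loc:energyC}. The data of $u$ are $O(\ve)$, which is affordable.

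This also shows that your ``main obstacle'' is backwards. The $u_{ap}$--$u_{ap}$ part of $G^\alpha$ is harmless: in the weaker weight $\w{y}^{1/2}\cW_{1,1}$ it is $\ls\ve^2\w{t}^{1/2}\ls B\ve$. Since \eqref{BA:impr} only asks for $O(\ve)$ with a constant $C_1=C_1(B)$ independent of $C_0$, the bootstrap closes with $\ve_1=2C_1\ve$. It is subtracting $\Box u_{ap}$ that creates the obstacle.

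To finish, use $|\p Z^av|\le|\p Z^au|+|\p Z^au_{ap}|$ together with \eqref{app-pw}, which already gives $|\p Z^au_{ap}|\ls\ve\w{x}^{-1/2}\w{t-|x|}^{-1}$. No cancellation between $u$ and $u_{ap}$ is needed here. Near the boundary, convert from $\tilde Z$ to $Z$ with \eqref{loc:energyC} and Sobolev embedding, as you propose.

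\textbf{A minor slip in your $G^\alpha$ bullet.} The bound $\w{y}^{-1/2}\w{s+|y|}$ is not $\ls\w{s}^{1/2}$ away from the cone; at $|y|\approx 2$ it is about $\w{s}$. For $|y|\le s/2$, use $\min\{\w{y},\w{s-|y|}\}\le\w{y}$ instead. This gives the uniform bound $\w{y}^{1/2}\ls\w{s}^{1/2}$.
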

\begin{proof}
Applying $Z^a$ to \eqref{eqn:div} yields
\begin{equation}\label{BA:impr1}
\Box Z^au=\sum_{b+c\le a}\sum_{\alpha,\beta,\mu=0}^2\p_{\alpha}(Q_{abc}^{\alpha\beta\mu}\p_{\beta}Z^bu\p_{\mu}Z^cu),
\end{equation}
where $Q_{abc}^{\alpha\beta\mu}$ are constants.
Set $\tilde Z=\chi_{[1/2,1]}(x)Z$. Then $\ds\cB\tilde Z^au|_{[0,\infty)\times\p\cK}=0$ holds.

By using \eqref{dpw:ibvp} to $\Box\tilde Z^au=\Box(\tilde Z^a-Z^a)u+\Box Z^au$ with $\eta=\tilde\ve$ and $|a|\le2N-19$, one can achieve
\begin{equation}\label{BA:impr2}
\begin{split}
&\quad\w{x}^{1/2}\w{t-|x|}|\p\tilde Z^au|
\ls\ve\w{t}^{\tilde\ve}\ln(2+t)\\
&\qquad+\ln(2+t)\sum_{|b|\le8}\sup_{s\in[0,t]}\w{s}\|\p^b(G^r,\p^{\le1}G^\alpha)(s)\|_{L^2(\cK_3)}\\
&\qquad+\w{t}^{\tilde\ve}\ln^2(2+t)\sum_{|b|\le10}\sup_{(s,y)\in[0,t]\times\overline{\R^2\setminus\cK_2}}\w{y}^{1/2}\cW_{1,1}(s,y)|Z^bG^\alpha(s,y)|,
\end{split}
\end{equation}
where we have used \eqref{initial:data}, $\supp_x\Box(\tilde Z^a-Z^a)u\subset\{x: |x|\le1\}$  and the expressions of $G^{\alpha},G^r$ by
\begin{equation}\label{BA:impr3}
G^{\alpha}=\sum_{\tilde a+\tilde b\le a}|Z^{\tilde a}\p uZ^{\tilde b}\p u|,\quad G^r=\Box(\tilde Z^a-Z^a)u.
\end{equation}
It follows from $u=v+u_{ap}$, \eqref{app-pw} and \eqref{loc:energyC} that
\begin{equation}\label{BA:impr4}
\sum_{|b|\le8}\sup_{s\in[0,t]}\w{s}\|\p^b(G^r,\p^{\le1}G^\alpha)(s)\|_{L^2(\cK_3)}\ls\ve\w{t}^{\tilde\ve}.
\end{equation}
On the other hand, \eqref{app-pw}, \eqref{BA} and \eqref{loc:energyC1} imply
\begin{equation}\label{BA:impr5}
\sum_{|b|\le10}\sup_{(s,y)\in[0,t]\times\overline{\R^2\setminus\cK_2}}\w{y}^{1/2}\cW_{1,1}(s,y)|Z^bG^\alpha(s,y)|
\ls\ve^{3/2-14\tilde\ve}\ve_1\w{t}^{1/2+2\tilde\ve}+\ve^2\w{t}^{1/2}\ls\ve.
\end{equation}
Collecting \eqref{BA:impr2}-\eqref{BA:impr5} leads to
\begin{equation*}
\w{x}^{1/2}\w{t-|x|}|\p\tilde Z^au|\ls\ve\w{t}^{2\tilde\ve}.
\end{equation*}
This, together with \eqref{app-pw}, \eqref{loc:energyC} and the standard Sobolev embedding, yields \eqref{BA:impr}.
\end{proof}

Finally, we turn to prove Theorem \ref{thm1}.

\begin{proof}[Proof of Theorem \ref{thm1}]
At first, \eqref{loc:energyC1} and \eqref{BA:impr} ensure that there is $C_1\ge1$ (which depends on $B$) such that
\begin{equation*}
\begin{split}
\sum_{|a|\le2N-8}|\p Z^av|&\le C_1\ve^{3/2-14\tilde\ve}\w{x}^{-1/2},\\
\sum_{|a|\le2N-19}|\p Z^av|&\le C_1\ve\w{x}^{-1/2}\w{t-|x|}^{-1}\w{t}^{2\tilde\ve}.
\end{split}
\end{equation*}
Choosing $\ve_1=2C_1\ve$ and $\ve_0=e^{-2C_110^3}$ such that \eqref{BA1} is satisfied.
For $t\le B^2/\ve^2-1$ and $N\ge19$, \eqref{BA} can be improved to
\begin{equation*}
\begin{split}
\sum_{|a|\le N}|\p Z^av|&\le\frac{1}{2}\ve^{1.4}\w{x}^{-1/2},\\
\sum_{|a|\le N}|\p Z^av|&\le\frac{1}{2}\ve_1\w{x}^{-1/2}\w{t-|x|}^{-1}\w{t}^{2\tilde\ve}.
\end{split}
\end{equation*}
This, together with the local existence of classical solution to the initial boundary value problem
of the nonlinear wave equation, implies that problem \eqref{QWE} admits a solution $u\in\bigcap\limits_{j=0}^{2N+1}C^{j}([0,\frac{B^2}{\ve^2}-1], H^{2N+1-j}(\cK))$.
Therefore, for any positive constant $B<\tau_0$, we have
\begin{equation*}
\liminf_{\ve\rightarrow 0+}\ve\sqrt{T_\ve}\ge B.
\end{equation*}
Letting $B\rightarrow\tau_0-$ yields the proof of Theorem \ref{thm1}.
\end{proof}

\section{The upper bound on the lifespan $T_\ve$ and proof of Theorem \ref{thm2}}\label{sect6}

This section aims to establish the upper bound on the lifespan $T_\ve$. It is pointed out that 
some ideas are inspired by \cite{Hormander97book,John}.

\subsection{Setup for the proof of Theorem \ref{thm2}}

Let $u=u(t,r)$ be the radial symmetric solution of \eqref{rad:wave}. Then one can see that
\begin{equation}\label{rad-eqn}
(\p_t^2-c^2\p_r^2)(\sqrt{r}u)=\frac{c^2}{4r^{3/2}}u,
\end{equation}
where the notation $c=c(\p_tu)$ is used in this section.
Define 
\begin{equation}\label{W-def}
\begin{split}
L_1:=\p_t+c\p_r,\qquad\qquad\quad &L_2:=\p_t-c\p_r,\\
W_1:=-\frac{1}{2c}L_2\p_r(\sqrt{r}u),\qquad &W_2:=\frac{1}{2c}L_1\p_r(\sqrt{r}u).
\end{split}
\end{equation}
It follows from \eqref{rad-eqn} and \eqref{W-def} that $W_1$ and $W_2$ satisfy
\begin{equation}\label{W-eqn}
\begin{split}
L_1W_1&=\frac{cc'}{\sqrt{r}}(W_1^2-W_1W_2)+\frac{cc'}{8r^2}u(W_1-W_2)
+\frac{c'}{4r}\p_tu(3W_1+W_2)\\
&\quad+\frac{c'}{8r^{5/2}}u\p_tu+\frac{3c}{16 r^{5/2}}u-\frac{c}{8r^{3/2}}\p_ru,\\
L_2W_2&=\frac{cc'}{\sqrt{r}}(W_2^2-W_1W_2)+\frac{cc'}{8r^2}u(W_2-W_1)-\frac{c'}{4r}\p_tu(W_1+3W_2)\\
&\quad-\frac{c'}{8 r^{5/2}}u\p_tu-\frac{3c}{16r^{5/2}}u+\frac{c}{8r^{3/2}}\p_ru,
\end{split}
\end{equation}
respectively.

\begin{lemma}
Let $W_1,W_2$ be defined by \eqref{W-def}, one then has
\begin{equation}\label{Diff-Form}
\begin{split}
d(|W_1|(dr-cdt))&=\sgn W_1\Big[\frac{cc'}{8r^2}u(W_1-W_2)+\frac{c'}{4r}\p_tu(W_1+W_2)\\
&\quad+\frac{c'}{8r^{5/2}}u\p_t u+\frac{3c}{16r^{5/2}}u-\frac{c}{8r^{3/2}}\p_ru\Big]dt\wedge dr,
\end{split}
\end{equation}
where $\sgn$ stands for the sign function.
\end{lemma}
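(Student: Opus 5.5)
The identity should follow by a direct exterior-derivative computation combined with the transport equation for $W_1$ in \eqref{W-eqn}. We take $c=c(\p_tu)$ as a function of $(t,r)$. For a $C^1$ function $f$ we have
\[
d\big(f(dr-c\,dt)\big)=df\wedge dr-c\,df\wedge dt-f\,dc\wedge dt .
\]
Writing $df=f_t\,dt+f_r\,dr$ and $dc=c_t\,dt+c_r\,dr$, this becomes
\[
d\big(f(dr-c\,dt)\big)=\big(f_t+c f_r+f c_r\big)\,dt\wedge dr=\big(L_1f+f\,\p_rc\big)\,dt\wedge dr .
\]
We apply this with $f=|W_1|$. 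Where $W_1\neq0$ we have $L_1|W_1|=\sgn W_1\,L_1W_1$, so the coefficient of $dt\wedge dr$ is $\sgn W_1\,(L_1W_1+W_1\p_rc)$. Near the zero set of $W_1$ the identity is understood distributionally, or a.e.; since $|W_1|$ is Lipschitz, this is enough for the later Stokes argument.

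The key step is to compute $\p_rc=c'\,\p_r\p_tu$ and to express $\p_r\p_tu$ in terms of $W_1,W_2$. From \eqref{W-def},
\[
W_2-W_1=\frac{1}{2c}(L_1+L_2)\p_r(\sqrt r u)=\frac1c\,\p_t\p_r(\sqrt r u)=\frac1c\Big(\sqrt r\,\p_r\p_tu+\frac{1}{2\sqrt r}\p_tu\Big).
\]
Hence
\[
\p_r\p_tu=\frac{c}{\sqrt r}(W_2-W_1)-\frac{1}{2r}\p_tu,
\]
and therefore
\[
W_1\p_rc=\frac{cc'}{\sqrt r}(W_1W_2-W_1^2)-\frac{c'}{2r}\p_tu\,W_1 .
\]

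Adding this to the right-hand side of the first equation in \eqref{W-eqn} produces two cancellations. The quadratic term $\frac{cc'}{\sqrt r}(W_1^2-W_1W_2)$ cancels exactly; this is the point of using the weight $|W_1|$ with the characteristic one-form $dr-c\,dt$. The term $\frac{c'}{4r}\p_tu(3W_1+W_2)$ combines with $-\frac{c'}{2r}\p_tu\,W_1$ to give $\frac{c'}{4r}\p_tu(W_1+W_2)$. The remaining terms $\frac{cc'}{8r^2}u(W_1-W_2)$, $\frac{c'}{8r^{5/2}}u\p_tu$, $\frac{3c}{16r^{5/2}}u$ and $-\frac{c}{8r^{3/2}}\p_ru$ pass through unchanged, which gives \eqref{Diff-Form}. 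The only delicate points are the sign and orientation conventions in the wedge product and the correct expression for $\p_r\p_tu$ obtained from $W_2-W_1$. Apart from these, the argument is routine bookkeeping. If one wants to re-derive the $L_1W_1$ equation from \eqref{rad-eqn} as a check, that is also routine, but here we simply take \eqref{W-eqn} as given.
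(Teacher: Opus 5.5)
Your proposal is correct and is essentially the paper's proof. The paper also obtains \eqref{Diff-Form} from the first equation of \eqref{W-eqn} via $d(|W_1|(dr-c\,dt))=\sgn W_1\,(L_1W_1+W_1\p_rc)\,dt\wedge dr$ together with $\p_rc=c'\big(\frac{c(W_2-W_1)}{\sqrt r}-\frac{\p_tu}{2r}\big)$, which is exactly what you derive from $W_2-W_1=\frac1c\p_t\p_r(\sqrt r\,u)$; your remark about interpreting the identity a.e. or distributionally on the zero set of $W_1$ is a small point the paper leaves implicit.
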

\begin{proof}
\eqref{Diff-Form} can be obtained directly by \eqref{W-eqn} with $\p_rc=c'(\frac{c(W_2-W_1)}{\sqrt{r}}-\frac{\p_t u}{2r})$ and
\begin{equation*}
d(|W_1|(dr-cdt))=\sgn W_1(L_1W_1+W_1\p_rc)dt\wedge dr.
\end{equation*}
\end{proof}
Assume that $\p_\sigma^2F_0(\sigma_0)=\max\limits_{\sigma}\p_\sigma^2F_0(\sigma)$.
For any $\lambda\in[\sigma_0-1,M_0]$ and $\mu\in\R$, let $\Gamma^+_\lambda$ and $\Gamma^-_\mu$ be the forward 
and backward characteristic curves (see Figure \ref{fig1}) passing through $(t_0,t_0+\lambda)$ and $(t_0,-t_0+\mu)$ 
in the $(t,r)$-plane, respectively,
\begin{equation}\label{chara-curve}
\begin{split}
&\frac{dr(t)}{dt}=c(\p_tu(t,r(t))),\qquad\qquad r(t_0)=t_0+\lambda,\\
&\frac{d\tilde r(t)}{dt}=-c(\p_tu(t,\tilde r(t))),\qquad\quad \tilde r(t_0)=-t_0+\mu.
\end{split}
\end{equation}
Choosing a large positive constant $t_0$ such that $t_0+\sigma_0-1\ge2$ which ensures that $r(t_0)=t_0+\lambda$ 
does not reach the obstacle $\cO$ and \eqref{chara-curve} is well defined.
For any $B<\tau_0$ and small $\ve$, denote $D$ the region surrounded by
\begin{equation*}
\Gamma^+_{\sigma_0-1},\quad\Gamma^+_{M_0},\quad\{t=t_0\},\quad\{t=T_\ve=\frac{B^2}{\ve^2}-1\}.
\end{equation*}

\begin{figure}[!h]
  \centering
  \includegraphics[width=0.8\textwidth]{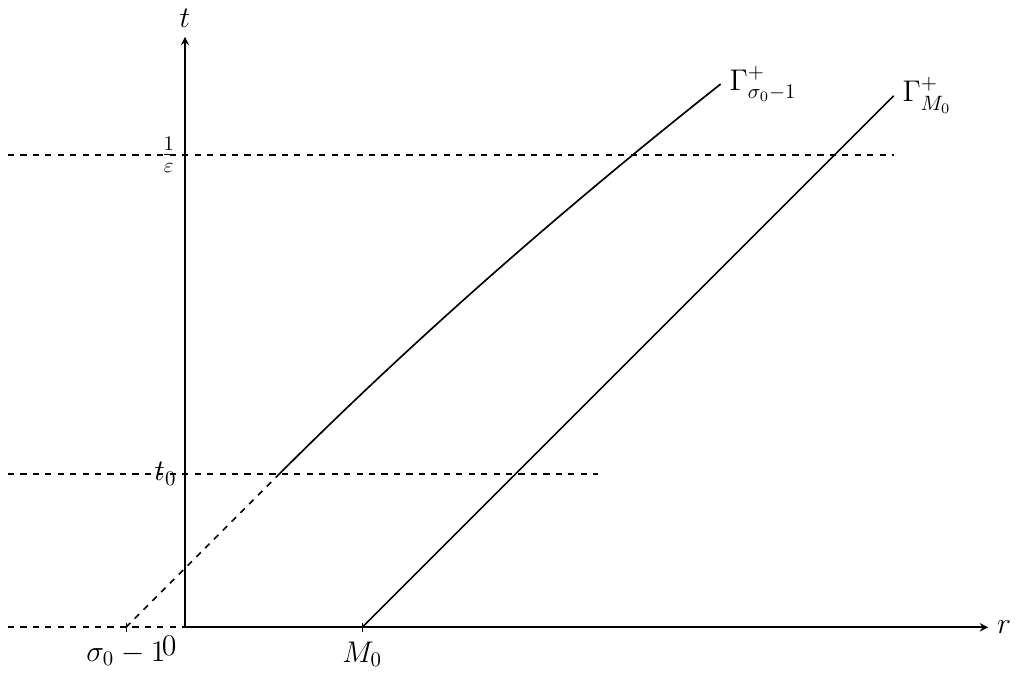}
  \caption{Characteristics}\label{fig1}
\end{figure}

\begin{lemma}\label{Lem6.2}
For any $\mu\in\R$ and $\lambda,\lambda'\in[\sigma_0-1,M_0]$, if $(t,r),(t',r')\in\Gamma^-_\mu\cap D$ and $(t,r)\in\Gamma^+_\lambda,(t',r')\in\Gamma^+_{\lambda'}$, then one has for $t\le T_\ve$,
\begin{equation}\label{time-lem}
|t-t'|\le C_B+|\lambda-\lambda'|.
\end{equation}
In particular, if $t,t'\le T_\ve':=\frac{\tau_0^2}{4\ve^2}-1$, the positive constant $C_B$ above and in the following part 
can be replaced by $C$ which is independent of $B$.
\end{lemma}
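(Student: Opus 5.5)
Consider a backward characteristic $\Gamma^-_\mu$ crossing the strip $D$. Its speed relative to a forward characteristic is about $2$, which bounds the time it spends between two forward characteristics by the distance between them in $\sigma$. The plan is to compare the backward characteristic with the forward ones $\Gamma^+_\lambda$ and $\Gamma^+_{\lambda'}$ through the points $(t,r)$ and $(t',r')$. Say $t<t'$. Along $\Gamma^-_\mu$ we have $d\tilde r/dt=-c$, and $c=c(\p_tu)=1+O(|\p_tu|)$ since $c(0)=1$. In $D$ (which lies in $r\ge t/3$, $\sigma\in[\sigma_0-1-C,M_0]$ for $t\le T_\ve$, by comparing forward characteristics with $r-t$ = const, see below), Theorem \ref{thm1} gives $u=u_{ap}+v$. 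The estimates \eqref{app-pw}, \eqref{BA} (now closed) yield $|\p u|\ls \ve\w{x}^{-1/2}\w{t-|x|}^{-1}\w t^{2\tilde\ve}\ls \ve\w t^{-1/2}$, hence $|c-1|\le C_B\ve t^{-1/2}$.

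\textbf{Step 1: forward characteristics stay near $\sigma=$const.} For $\Gamma^+_\lambda$ we get $\frac{d}{dt}(r(t)-t)=c-1=O(\ve t^{-1/2})$. Integrating from $t_0$ to $t\le T_\ve\le B^2/\ve^2$ gives
\begin{equation*}
|r(t)-t-\lambda|\le C_B\ve\sqrt{T_\ve}\le C_B.
\end{equation*}
So $D\subset\{\sigma_0-1-C_B\le r-t\le M_0+C_B\}$. This justifies the a priori region used above and shows $r\sim t$ there.

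\textbf{Step 2: compare along $\Gamma^-_\mu$.} Set $\phi(s)=r_\lambda(s)-\tilde r(s)$, where $r_\lambda$ parametrizes $\Gamma^+_\lambda$. Then $\phi'(s)=c(\p_tu(s,r_\lambda(s)))+c(\p_tu(s,\tilde r(s)))$, so $|\phi'(s)-2|\le C_B\ve s^{-1/2}$ as long as $\tilde r(s)$ remains in $D$. Here we use the convexity of $D$ in the time direction, i.e.\ that $\Gamma^-_\mu\cap D$ is a connected arc between its boundary forward characteristics; this follows from the strict transversality $\phi'>1$. Now $\phi(t)=0$ because $(t,r)\in\Gamma^+_\lambda\cap\Gamma^-_\mu$. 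Moreover $\phi(t')=r_\lambda(t')-r'=r_\lambda(t')-r_{\lambda'}(t')$, and by Step 1 this is at most $|\lambda-\lambda'|+2C_B$ in absolute value. On the other hand, integrating $\phi'$ gives $\phi(t')\ge 2(t'-t)-C_B\ve(\sqrt{t'}-\sqrt t)\ge 2(t'-t)-C_B$. Hence $2|t-t'|\le |\lambda-\lambda'|+C_B$, which is \eqref{time-lem} (even with a factor $1/2$ on $|\lambda-\lambda'|$).

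\textbf{Constants and the main obstacle.} Every $C_B$ arises from $\ve\sqrt{t}\le\ve\sqrt{T_\ve}\le B$ times the constant in the decay bound for $\p_tu$. For $t,t'\le T'_\ve$ we have $\ve\sqrt t\le\tau_0/2$. The pointwise bound of $\p u$ on $[0,T'_\ve]$ is then uniform, since it can be taken from the case $B=\tau_0/2$; this gives a $B$-independent $C$. The main obstacle is Step 1: one needs the sharp $t^{-1/2}$ decay of $\p_tu$ uniformly up to $T_\ve$ in the region $r\sim t$, with $\w{t-|x|}$ bounded. Since $\w t^{2\tilde\ve}$ appears in \eqref{BA}, I would instead use the first bound of \eqref{BA} together with \eqref{app-pw} and \eqref{Uestimate}: $|\p u_{ap}|\ls\ve r^{-1/2}$ and $|\p v|\le\ve^{1.4}r^{-1/2}$. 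These give $|c-1|\ls\ve t^{-1/2}$ cleanly, with no loss.
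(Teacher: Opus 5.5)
Your proof is correct and is essentially the paper's: both rest on the bound $|c-1|\le C_B\ve s^{-1/2}$ along characteristics in $D$, integrated to get $|r(t)-t-\lambda|\le C_B$. The loss-free choice you settle on at the end, $|\p u_{ap}|\ls\ve\w{x}^{-1/2}$ and $|\p v|\ls\ve^{1.4}\w{x}^{-1/2}$, is exactly the paper's \eqref{time-lem-pf1}; the $\w{t}^{2\tilde\ve}$-lossy bound you first wrote would integrate to $\ve^{-4\tilde\ve}$ and not give a uniform constant. The remaining difference is cosmetic: the paper also bounds $|\tilde r(t)+t-\mu|\le C_B$ by integrating along $\Gamma^-_\mu$ from $t_0$ and combines the four deviations by the triangle inequality, while you integrate the relative speed $r_\lambda-\tilde r$ only over $[t,t']$.
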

\begin{proof}
It follows from \eqref{app-pw}, \eqref{loc:energyC1} and \eqref{BA:impr} that for $t\le T_\ve$,
\begin{equation}\label{time-lem-pf1}
|\p\p^{\le1}u(t,r)|\le C_B\ve(1+t)^{-1/2}.
\end{equation}
This, together with \eqref{chara-curve}, yields
\begin{equation}\label{time-lem-pf2}
\begin{split}
|r(t)-t-\lambda|&=\Big|t_0+\lambda+\int_{t_0}^tc(\p_tu(s,r(s)))ds-t-\lambda\Big|\\
&\le\int_{t_0}^t|c(\p_tu(s,r(s)))-1|ds\\
&\le C_B\ve\int_{t_0}^t(1+s)^{-1/2}ds\le C_B,
\end{split}
\end{equation}
which means the horizontal width of $D$ is finite.
Analogously, one has
\begin{equation*}
\begin{split}
|\tilde r(t)+t-\mu|&=\Big|-t_0+\mu+\int_{t_0}^t-c(\p_tu(s,\tilde r(s)))ds+t-\mu\Big|\\
&\le\int_{t_0}^t|1-c(\p_tu(s,\tilde r(s)))|ds\\
&\le C_B\ve\int_{t_0}^t(1+s)^{-1/2}ds\le C_B.
\end{split}
\end{equation*}
On the other hand, it is easy to see that
\begin{equation*}
\begin{split}
|t-t'|&\le\frac{1}{2}(|t+r-\mu|+|t'+r'-\mu|+|r-t-\lambda|+|r'-t'-\lambda'|+|\lambda-\lambda'|)\\
&\le C_B+|\lambda-\lambda'|,
\end{split}
\end{equation*}
which finishes the proof of Lemma \ref{Lem6.2}.
\end{proof}

\subsection{Main estimates}

For $t\le T_\ve=\frac{B^2}{\ve^2}-1$, define
\begin{equation}\label{XYZ-def}
\begin{split}
X(t)&=\sup_{\frac{1}{\ve}\le s\le t}\int_{(s,r)\in D}|W_1(s,r)|dr, \\
Y(t)&=\sup_{\frac{1}{\ve}\le s\le t}\sup_{(s,r)\in D}s^{\frac{1}{2}}|\p_{s,r}u(s,r)|, \\
Z(t)&=\sup_{\frac{1}{\ve}\le s\le t}\sup_{(s,r)\in D}s|W_2(s,r)|.
\end{split}
\end{equation}

\begin{lemma}
There exits a constant $E\ge1$ independent of $B$ such that
\begin{equation}\label{XYZ-initial}
X(\frac{1}{\ve})\le\frac{E\ve}{2},\quad Y(\frac{1}{\ve})\le E\ve,\quad Z(\frac{1}{\ve})\le E^2\ve^2.
\end{equation}
\end{lemma}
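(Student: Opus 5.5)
At the time $s=1/\ve$ the solution is still, up to lower order, $\ve$ times the linear solution $w_B$. Indeed, for $t\le 1/\ve$ we have $u_{ap}=\ve w_B$, and the error $v=u-u_{ap}$ is controlled by the bootstrap-improved bounds \eqref{loc:energyC1} and \eqref{BA:impr}. My plan is therefore to bound each of $W_1,W_2,\p u$ at time $1/\ve$ by writing $u=\ve w_B+v$. I estimate the $w_B$ contribution through Proposition \ref{prop31} and \eqref{LWE-decay}, and the $v$ contribution through the improved pointwise bounds. All constants used for $t\le 1/\ve\le T'_\ve$ can be taken independent of $B$, as the last remark of Lemma \ref{Lem6.2} allows. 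Note also that on $D\cap\{s\le 1/\ve\}$ we have $r\sim s$ and, by \eqref{time-lem-pf2}, $|r-s|\le C$.

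\textbf{$Y$.} By \eqref{LWE-decay}, $|\p w_B|\ls\w{x}^{-1/2}\w{t-|x|}^{-1}$. In $D$, where $r\sim s$, this gives $s^{1/2}|\p u|\ls\ve+s^{1/2}|\p v|$. Using \eqref{BA:impr} with $t\le1/\ve$, the $v$ term is $\ls\ve\w{s}^{2\tilde\ve}$. I would absorb the factor $\w{s}^{2\tilde\ve}$ by instead using \eqref{loc:energyC1}, which gives $\ve^{3/2-14\tilde\ve}\ls\ve$. This yields $Y(1/\ve)\le E\ve$.

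\textbf{$X$.} Since $W_1=-\frac1{2c}L_2\p_r(\sqrt r u)$ with $c=1+O(\ve)$, we have $|W_1|\ls\sqrt r|\p^2u|+r^{-1/2}|\p u|+r^{-3/2}|u|$. The width of $D$ in $r$ is $O(1)$ by \eqref{time-lem-pf2}. So it suffices to have $\sqrt r|\p^2 u|\ls\ve$ pointwise, which follows from \eqref{LWE-decay} applied to $\p w_B$ (giving $\ve r^{1/2}r^{-1/2}$) and from \eqref{loc:energyC1} for $v$. The lower-order terms are even smaller. Hence $\int_D|W_1|dr\le E\ve/2$ after enlarging $E$.

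\textbf{$Z$, the main obstacle.} Here we need $s|W_2|\ls\ve^2$, which is a gain of $\ve$ over the naive bound. For $\ve w_B$, write $\sqrt r w_B=F_0(\sigma,\theta)+O(\w{t}^{-1/2}\ln)$ by \eqref{LWE-error}. Since $L_1=\p_t+c\p_r$ and $(\p_t+\p_r)F_0=0$, we get
\[
W_2=\tfrac1{2c}\big[(\p_t+\p_r)\p_r(\sqrt r u)+(c-1)\p_r^2(\sqrt r u)\big].
\]
The first piece is $\ve$ times $(\p_t+\p_r)\p_r(\sqrt r w_B-F_0)$. I would control it using the equation \eqref{rad-eqn}, i.e. $(\p_t-\p_r)(\p_t+\p_r)(\sqrt r w_B)=\frac{1}{4r^{3/2}}w_B$. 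Integrating along the incoming null direction from the initial support gives $|(\p_t+\p_r)\p_r(\sqrt r w_B)|\ls r^{-1}$, which is better than the rate $t^{-3/2}\ln$ of \eqref{LWE-error}. The second piece has $|c-1|\ls|\p_tu|\ls\ve s^{-1/2}$ and $|\p_r^2(\sqrt r u)|\ls\ve$, so it is $O(\ve^2 s^{-1/2})$. Multiplying by $s=1/\ve$ then gives $\ve^{3/2}$ from that piece, which is not good enough. The fix is to use the quadratic nature of $c-1=c'(0)\p_tu+O(|\p_t u|^2)$ together with the sharper decay $|\p_t u|\ls\ve\w{t-r}^{-1}r^{-1/2}$. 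Combined with the explicit factor $r^{-1/2}$ in $\p_r^2(\sqrt r u)\approx r^{1/2}\p^2u$, this should yield $O(\ve^2 s^{-1})$; I expect this bookkeeping to be the delicate step. The contribution of $v$ needs $s|(\p_t+\p_r)\p_r(\sqrt r v)|\ls\ve^2$. I would derive it similarly from the equation \eqref{rad-eqn}, integrating $L_2W_2$ in \eqref{W-eqn} along backward characteristics from $t=0$: the right side there is $O(\ve^2 r^{-3/2})$ plus terms linear in $W_2$ with integrable coefficients, and Gronwall gives $|W_2|\le E^2\ve^2 s^{-1}$.
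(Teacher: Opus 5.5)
Your bounds for $X(\frac1\ve)$ and $Y(\frac1\ve)$ are fine and match what the paper does via \eqref{time-lem-pf1}--\eqref{time-lem-pf2}. The gap is in the $Z$ estimate.

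Writing $L_1=(\p_t+\p_r)+(c-1)\p_r$ splits $W_2$ into two pieces. On $D$ each piece has size $\ve^2s^{-1/2}$, and they cancel to leading order, so neither one can be bounded by $\ve^2s^{-1}$.

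\textbf{The second piece.} On $D$ we have $\w{t-r}\sim1$. By \eqref{LWE-error} and \eqref{loc:energyC1}, $\p_tu\approx-\ve r^{-1/2}F_0'(\sigma)$ and $\p_r^2(\sqrt r u)\approx\ve F_0''(\sigma)$. Hence $(c-1)\p_r^2(\sqrt ru)\approx-c'(0)\ve^2r^{-1/2}F_0'F_0''$, which is not $O(\ve^2r^{-1})$ near $\sigma_0$. The \emph{quadratic nature} of $c-1$ gives nothing, because $c-1$ is linear in $\p_tu$ at leading order. The weight $\w{t-r}^{-1}$ gives nothing either, because it is $O(1)$ on $D$.

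\textbf{The $v$-part of the first piece.} Your target $s|(\p_t+\p_r)\p_r(\sqrt r v)|\ls\ve^2$ is false. We have $(\p_t+\p_r)\p_r(\sqrt ru)=2cW_2-(c-1)\p_r^2(\sqrt ru)$, and the true bound $W_2=O(\ve^3)$ holds at $t=\frac1\ve$. So the $v$-part must be $\approx-(c-1)\p_r^2(\sqrt ru)$, which is of order $\ve^{5/2}$; heuristically it is $\frac{\ve^2}{2\sqrt t}\p^2_{\tau\sigma}U$. Multiplying by $s$ gives order $\ve^{3/2}$, not $\ve^2$. The smallness of $W_2$ is only visible through the true characteristic field $L_1=\p_t+c\p_r$.

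\textbf{The $w_B$-part.} Your bound $|(\p_t+\p_r)\p_r(\sqrt rw_B)|\ls r^{-1}$ would only give $s|W_2|\ls\ve$. You need the rate $r^{-2}$. It does hold, because the backward null segment from $D$ to the edge of the support has length $O(1)$.

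\textbf{The repair.} Use your last sentence, but apply it to the full $W_2$ with no splitting; a \emph{$v$-contribution} to $W_2$ does not satisfy \eqref{W-eqn}.
\begin{itemize}
\item Integrate $L_2W_2$ along $\Gamma^-_\mu$, starting where it meets $\Gamma^+_{M_0}$. There $W_2=0$, since $u$ vanishes to the right. By Lemma \ref{Lem6.2} the time span is $O(1)$.
\item For $t\in[\frac1{3\ve},\frac3\ve]$ we have $|W_1|+|W_2|\le C\ve$ and $|u|+|\p u|\le C\ve t^{-1/2}$. Then \eqref{W-eqn} gives $|L_2W_2|\le C(\ve^2t^{-1/2}+\ve t^{-2})$, hence $|W_2|\le C\ve^2t^{-1/2}$ on $[\frac1{2\ve},\frac2\ve]$.
\item Feeding this back gives $|L_2W_2|\le C(\ve^3t^{-1}+\ve^2t^{-3/2}+\ve t^{-2})$. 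Therefore $|W_2(\frac1\ve,r)|\le C\ve^3$, that is, $Z(\frac1\ve)\le C\ve^2$.
\end{itemize}
This is exactly the paper's argument; you should drop the decomposition through $\ve w_B$ and the $(c-1)$ bookkeeping.

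Note also that the dominant source in \eqref{W-eqn} is the linear term $\frac{c}{8r^{3/2}}\p_ru=O(\ve t^{-2})$. Your description \emph{$O(\ve^2r^{-3/2})$} omits it. It is harmless only because the integration length is $O(1)$.
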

\begin{proof}
By \eqref{time-lem-pf2}, one can see that
\begin{equation*}
X(\frac{1}{\ve})\le C\ve\int_{(\frac{1}{\ve},r)\in D}dr\le C\ve,
\end{equation*}
where and in the remaining part of this lemma, the generic constant $C>0$ is independent of $B$.
On the other hand, it follows from \eqref{time-lem-pf1} that $Y(\frac{1}{\ve})\le C\ve$.

Next, we turn our attention to $Z(\frac{1}{\ve})$.
For $t\in[\frac{1}{3\ve},\frac{3}{\ve}]$ and $(t,r)\in D$, \eqref{W-def}, \eqref{time-lem-pf1} and \eqref{time-lem-pf2} imply
\begin{equation*}
|u(t,r)|\le C\ve(1+t)^{-1/2},\quad |W_1(t,r)|+|W_2(t,r)|\le C\ve.
\end{equation*}
Then, by \eqref{W-eqn}, we can achieve
\begin{equation*}
|L_2W_2(t,r)|\le C(\ve^2t^{-1/2}+\ve t^{-2}),\qquad (t,r)\in D, \quad t\in[\frac{1}{3\ve},\frac{3}{\ve}].
\end{equation*}
For any $t\in[\frac{1}{2\ve},\frac{2}{\ve}]$, $(t,r)\in D$ and small $\ve$, there is $(t',r')\in D\cap\Gamma^+_{M_0}$ with $t'\in[\frac{1}{3\ve},\frac{3}{\ve}]$ such that $(t,r),(t',r')\in\Gamma^-_\mu$.
Then integrating $L_2W_2$ along the backward characteristic curve $\Gamma^-_\mu$ leads to
\begin{equation*}
|W_2(t,r)|\le\int_{(t',\tilde r(t'))}^{(t,\tilde r(t))}C(\ve^2s^{-1/2}+\ve s^{-2})\le C\ve^2t^{-1/2},
\end{equation*}
which yields $|L_2W_2(t,r)|\le C(\ve^3t^{-1}+\ve^2t^{-3/2}+\ve t^{-2})$ provided that $t\in[\frac{1}{2\ve},\frac{2}{\ve}]$ and $(t,r)\in D$.
For any $(t=\frac{1}{\ve},r)\in D$ and small $\ve$, repeating the above procedure completes the proof of \eqref{XYZ-initial}.
\end{proof}

\begin{lemma}\label{Lem6.4}
For $t\in[\frac{1}{\ve},T_\ve]$ with $\ve$ sufficiently small, under the assumptions
\begin{equation}\label{XYZ-assum}
X(t)\le E\ve,\quad Y(t)\le3E\ve,\quad Z(t)\le7\tilde CE^2\ve^2,
\end{equation}
then one has
\begin{equation}\label{XYZ-impr}
X(t)\le\frac{5E\ve}{8},\quad Y(t)\le2E\ve,\quad Z(t)\le6\tilde CE^2\ve^2,
\end{equation}
where $\tilde C=M_0+|\sigma_0-1|+\tau_0>1$.
\end{lemma}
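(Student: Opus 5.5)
This is a bootstrap in the style of John's characteristic method: each of $X$, $Y$, $Z$ is controlled along the characteristics $\Gamma^\pm$ inside $D$. Throughout I use $|u|\ls\ve t^{-1/2}\ln t$ in $D$, which follows from integrating $\p_r u$ from the outer edge $\Gamma^+_{M_0}$, where $u$ vanishes for large $t$ since $\supp u\subset\{r\le t+M_0\}$ and the width of $D$ is $O(C_B)$ by \eqref{time-lem-pf2}. Since $t\le T_\ve\le B^2/\ve^2$, we have $\ve\sqrt t\le B<\tau_0$, so every $\ve\int^t s^{-1/2}ds$ is $O(\tau_0)$ with a constant independent of $E$.

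\textbf{The $Z$ bound.} Fix $(t,r)\in D$ and follow the backward characteristic $\Gamma^-_\mu$ through it. By \eqref{time-lem} it crosses $D$ in time $O(\tilde C)$, so it reaches either $\Gamma^+_{M_0}$, where $W_2$ vanishes by the support of $u$, or the time slice $s=1/\ve$, where \eqref{XYZ-initial} applies. Integrating the $L_2W_2$ equation in \eqref{W-eqn} along it, I estimate the right-hand side term by term using \eqref{XYZ-assum}:
\begin{itemize}
\item $cc'r^{-1/2}W_2^2\ls s^{-5/2}\ve^4E^4$;
\item $cc'r^{-1/2}W_1W_2\ls s^{-1/2}|W_1|\,s^{-1}\cdot 7\tilde CE^2\ve^2$;
\item $\frac{c'}{4r}\p_tu(W_1+3W_2)\ls s^{-3/2}\cdot 3E\ve(|W_1|+|W_2|)$;
\item the $r^{-3/2}\p_r u$ and $r^{-5/2}u$ terms $\ls E\ve s^{-2}$.
\end{itemize}
For $|W_1|$ itself I use $|W_1|\ls|\p^2(\sqrt r u)|\ls C_B\ve$, which follows from \eqref{time-lem-pf1}. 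Because the path has length $O(\tilde C)$, the integral is $\ls\tilde C s^{-1}\ve^2E(1+o(1))$. The source terms $\tilde C\ve s^{-2}$ and $E\ve^2s^{-1}\cdot\tilde C$ therefore give $sZ\le E^2\ve^2+\tilde CE^2\ve^2\cdot(\text{small})$. More carefully, the $E\ve s^{-2}$ term contributes $\tilde CE\ve t^{-2}\cdot t\le\tilde CE\ve^2$ since $t\ge1/\ve$, and this is at most $6\tilde CE^2\ve^2$ when $E\ge1$.

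\textbf{The $Y$ bound.} I recover $\p u$ from $W_1$, $W_2$ and lower-order terms: $\p_r(\sqrt r u)$ is obtained by integrating $-\p_r$ of $W$ from $\Gamma^+_{M_0}$. On each time slice I use $\p_r\sqrt r u=\int|\p_r^2\sqrt r u|$, with $\p_r^2(\sqrt r u)=(W_2-W_1)/c\cdot(\dots)$. This gives
\begin{equation*}
s^{1/2}|\p u|\le \int|W_1|\,dr+O(\tilde C Z s)+O(\ve^{2})\le X+o(\ve)\le \tfrac54E\ve,
\end{equation*}
which is below $2E\ve$.

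\textbf{The $X$ bound (main obstacle).} I apply Stokes' theorem with \eqref{Diff-Form} on the subregion of $D$ between the slices $1/\ve$ and $s$, bounded laterally by $\Gamma^+_{\sigma_0-1}$ and $\Gamma^+_{M_0}$. The form $dr-c\,dt$ vanishes on these forward characteristics, so
\begin{equation*}
\int_{\{s\}}|W_1|\,dr\le\int_{\{1/\ve\}}|W_1|\,dr+\iint_D|\text{RHS of \eqref{Diff-Form}}|.
\end{equation*}
The crucial point is that the dangerous $W_1^2$ term has cancelled in \eqref{Diff-Form}. The remaining terms are bounded as follows:
\begin{itemize}
\item $r^{-1}|\p_tu||W_1|\le 3E\ve s^{-3/2}|W_1|$, whose integral is $\le 3E\ve\int s^{-3/2}X\,ds\le 3E^2\ve^{2}\cdot2\ve^{1/2}$, which is small;
\item $r^{-1}\p_tu\,W_2\ls E\ve s^{-3/2}\cdot s^{-1}\cdot\tilde CE^2\ve^2\cdot\tilde C$;
\item the $r^{-3/2}\p_ru$ term $\ls\int s^{-2}E\ve\cdot\tilde C\,ds\ls\tilde CE\ve^2$.
\end{itemize}
The total is $\frac{E\ve}{2}+o(E\ve)\le\frac{5E\ve}{8}$ once $\ve$ is small. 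I expect the real difficulty to be checking that no term is only $O(E\ve)$ with an $O(1)$ coefficient. That requires the cancellation in \eqref{Diff-Form}, together with the fact that the forward-characteristic boundaries contribute nothing.
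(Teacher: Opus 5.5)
\textbf{Gap 1: the crossing time in the $Z$ step.} You take the time $|t-t'|$ that $\Gamma^-_\mu$ spends in $D$ from \eqref{time-lem}. That estimate reads $|t-t'|\le C_B+|\lambda-\lambda'|$, not $O(\tilde C)$, and here the crossing time is exactly what fixes the constant. Under $Y\le 3E\ve$, the source $\frac{c}{8r^{3/2}}\p_r u$ in the $L_2W_2$ equation has size about $\frac{3E\ve}{8}s^{-2}$. Integrating it along $\Gamma^-_\mu$ therefore contributes about $\frac{3E\ve}{8t}|t-t'|$ to $t|W_2|$, i.e.\ about $\frac38 E|t-t'|\ve^2$ when $t$ is comparable to $1/\ve$. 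Since $E$ is fixed independently of $B$ in \eqref{XYZ-initial}, this stays below $6\tilde CE^2\ve^2$ only if $|t-t'|=O(\tilde CE)$ uniformly in $B$. With $C_B$ in the crossing time, the bootstrap does not close. The paper gets the right bound by redoing the characteristic computation with the bootstrap hypothesis $Y\le 3E\ve$ in place of \eqref{time-lem-pf1}. This gives \eqref{XYZ-impr-pf1}, $|t-t'|\le 8\tilde CE$, and hence a contribution of at most $4\tilde CE^2\ve^2$.

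\textbf{Gap 2: the pointwise bound on $W_1$.} You handle the $W_1W_2$ and $\p_tu\,W_1$ terms with $|W_1|\ls C_B\ve$ from \eqref{time-lem-pf1}. $W_1$ is precisely the quantity this lemma must not bound pointwise. Since $W_1\approx\ve\p_\sigma^2U$ is what blows up through the Riccati equation, any pointwise bound on it degenerates as $B\to\tau_0$. That is why the bootstrap tracks $W_1$ only through the $L^1$ quantity $X$ and keeps every constant independent of $B$. The missing idea is to control $W_1$ along the backward characteristic in $L^1$. Apply Stokes' theorem with \eqref{Diff-Form} on the region $D_1$ bounded by the slice $\{t\}$, by $\Gamma^+_{M_0}$ (where $dr-c\,dt=0$) and by $\Gamma^-_\mu$. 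This gives \eqref{XYZ-impr-pf5}, $\int_{\Gamma^-_\mu}|W_1|(dr-c\,dt)\le X(t)+O(\ve^2)\le\frac{3E\ve}{4}$, and that bound is what the paper feeds into the $W_1$-terms of the $L_2W_2$ equation.

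\textbf{The $X$ and $Y$ steps.} Your $X$ step coincides with the paper's. Your $Y$ step integrates in $r$ on the slice from $r=t+M_0$, where $\p(\sqrt r u)=0$. This is a valid and simpler alternative to the paper's argument, once you fix three things:
\begin{enumerate}
\item use the correct identity $\p_r^2(\sqrt r u)=W_1+W_2$;
\item treat the time component as well, via $\p_t\p_r(\sqrt r u)=c(W_2-W_1)$;
\item write the $W_2$ contribution as $O(\tilde CZ/s)$ rather than $O(\tilde CZs)$.
\end{enumerate}
Note, though, that the paper runs $Y$ along $\Gamma^-_\mu$ exactly because that route produces \eqref{XYZ-impr-pf5}, which you still need for $Z$.
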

\begin{proof}
Analogously to the proof of \eqref{time-lem} with \eqref{XYZ-assum} instead of \eqref{time-lem-pf1}, one has
\begin{equation}\label{XYZ-impr-pf1}
\begin{split}
|r(t)-t-\lambda|&\le4E\tau_0,\\
|\tilde r(t)+t-\mu|&\le4E\tau_0,\\
|t-t'|&\le8E\tau_0+|\lambda-\lambda'|\le 8\tilde CE,
\end{split}
\end{equation}
provided that $\mu\in\R$, $\lambda,\lambda'\in[\sigma_0-1,M_0]$, $(t,r),(t',r')\in\Gamma^-_\mu\cap D$ and $(t,r)\in\Gamma^+_\lambda,(t',r')\in\Gamma^+_{\lambda'}$.
The proof of \eqref{XYZ-impr} will be separated into the following three parts.

\subsubsection*{A. Estimate of $X(t)$}
By \eqref{Diff-Form}, one has
\begin{equation}\label{XYZ-impr-pf2}
\begin{split}
\int_{(t,r)\in D}|W_1(t,r)|dr
&\le\int_{(\frac{1}{\ve},r)\in D}|W_1(\frac{1}{\ve},r)|dr+\iint_{\substack{\frac{1}{\ve}\le s\le t \\ (s,r)\in D}}
\Big|\frac{cc'}{8r^2}u(W_1-W_2)\\
&\quad+\frac{c'}{4r}\p_t u(W_1+W_2)+\frac{c'}{8r^{5/2}}u\p_tu+\frac{3c}{16r^{5/2}}u-\frac{c}{8r^{3/2}}\p_ru\Big|dsdr.
\end{split}
\end{equation}
On the other hand, \eqref{XYZ-assum} and \eqref{XYZ-impr-pf1} yield $|u(t,r)|\le2E\ve t^{-1/2}(4E\tau_0+M_0+|\sigma_0-1|)$.
This, together with \eqref{XYZ-initial}, \eqref{XYZ-assum}, \eqref{XYZ-impr-pf2} and the smallness of $\ve$, implies
\begin{equation}\label{XYZ-impr-pf3}
\begin{split}
\int_{(t,r)\in D}|W_1(t,r)|dr
&\le\frac{E\ve}{2}+C\int_{\frac{1}{\ve}}^t\ve s^{-3/2}X(s)ds\\
&\quad+C\iint_{\substack{\frac{1}{\ve}\le s\le t \\ (s,r)\in D}}\ve(s^{-2}+s^{-5/2}Z(s))dsdr\\
&\le\frac{E\ve}{2}+C\ve^2\le\frac{5E\ve}{8},
\end{split}
\end{equation}
where the constant $C>0$ depends on $E,M_0,\sigma_0,\tau_0$ and is independent of $B$.
Thus, the estimate of $X(t)$ in \eqref{XYZ-impr} is finished.

\subsubsection*{B. Estimate of $Y(t)$}
For any $(t,r)\in D$ with $t\ge\frac{1}{\ve}$, there is $(t',r')\in\Gamma^+_{M_0}$ such that $(t,r),(t',r')\in\Gamma^-_{\mu}$.

When $t'\le\frac{1}{\ve}$, one can see that $t\le|t-t'|+t'\le\frac{2}{\ve}$.
Analogously to \eqref{time-lem-pf1}, one has $Y(t)\le C\ve$ with the constant $C>0$ independent of $B, E$.

When $t'\ge\frac{1}{\ve}$, set $\Gamma_t=\{(t,\breve{r}),\breve{r}\ge1\}$ and denote $D_1$ the domain (see Figure \ref{fig2}) surrounded by $\Gamma_t,\Gamma^+_{M_0},\Gamma^-_{\mu}$.

\begin{figure}[!h]
  \centering
  \includegraphics[width=0.7\textwidth]{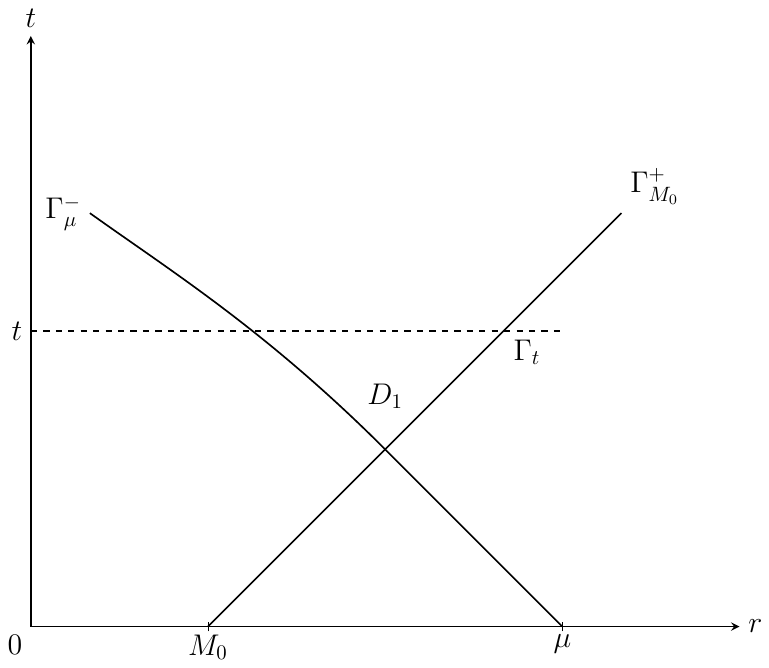}
  \caption{Domain $D_1$}\label{fig2}
\end{figure}

It follows from \eqref{rad-eqn} and \eqref{W-def} that
\begin{equation*}
\begin{split}
L_2\p_r(\sqrt{r}u)&=-2cW_1,\\
L_2\p_t(\sqrt{r}u)&=\p_t^2(\sqrt{r}u)-c\p_{tr}^2(\sqrt{r}u)
=\frac{c^2}{4r^{3/2}}u+c^2\p_r^2(\sqrt{r}u)-c\p_{tr}^2(\sqrt{r}u)\\
&=\frac{c^2}{4r^{3/2}}u-cL_2\p_r(\sqrt{r}u)=\frac{c^2}{4r^{3/2}}u+2c^2W_1.
\end{split}
\end{equation*}
Integrating these equations along the backward characteristic curve $\Gamma^-_{\mu}$ with $(t',r')\in\Gamma^+_{M_0}$ yields
\begin{equation}\label{XYZ-impr-pf4}
\p_{t,r}(\sqrt{r}u)=\int_{t'}^t L_2\p_{t,r}(\sqrt{\tilde{r}}(s)u(s,\tilde{r}(s)))ds.
\end{equation}
Thus, one needs to treat the estimate of $\int_{\Gamma^-_{\mu}}|W_1|(dr-cdt)$.
To this end, integrating \eqref{Diff-Form} over $D_1$ gives
\begin{equation*}
\begin{split}
(\int_{\Gamma_t}+\int_{\Gamma^-_{\mu}})|W_1|(dr-cdt)&=\iint_{(s,r)\in D_1}\sgn W_1\Big[\frac{cc'}{8r^2}u(W_1-W_2)+\frac{c'}{4r}\p_tu(W_1+W_2)\\
&\quad+\frac{c'}{8r^{5/2}}u\p_tu+\frac{3c}{16r^{5/2}}u-\frac{c}{8 r^{3/2}}\p_ru\Big]dsdr.
\end{split}
\end{equation*}
This, together with \eqref{XYZ-assum}, \eqref{XYZ-impr-pf1} and \eqref{XYZ-impr-pf3}, implies
\begin{equation}\label{XYZ-impr-pf5}
\begin{split}
\int_{\Gamma^-_{\mu}}|W_1|(dr-cdt)
&\le\iint_{(s,r)\in D_1}\Big|\frac{cc'}{8r^2}u(W_1-W_2)+\frac{c'}{4r}\p_tu(W_1+W_2)+\frac{c'}{8r^{5/2}}u\p_tu\\
&\quad+\frac{3c}{16r^{5/2}}u-\frac{c}{8 r^{3/2}}\p_ru\Big|dsdr+\int_{(t,r)\in D}|W_1(t,r)|dr\\
&\le C\int_{t'}^t(\ve^2s^{-3/2}+\ve s^{-2})ds+\frac{5E\ve}{8}\le\frac{3E\ve}{4}.
\end{split}
\end{equation}
Substituting \eqref{XYZ-impr-pf5} into \eqref{XYZ-impr-pf4} yields
\begin{equation*}
|\p_{t,r}(\sqrt{r}u)|\le\frac{7}{3}\int_{\Gamma^-_{\mu}}|W_1|(dr-cdt)+C\int_{t'}^t\ve s^{-2}ds\le\frac{15E\ve}{8},
\end{equation*}
which gives the estimate of $Y(t)$ in \eqref{XYZ-impr}.

\subsubsection*{C. Estimate of $Z(t)$}
For any $(t,r)\in D$ with $t\ge\frac{1}{\ve}$, $(t',r')\in\Gamma^+_{M_0}$ is the same point in the estimate of $Y(t)$.

When $t'\le\frac{1}{\ve}$, one has $t\le\frac{2}{\ve}$ and $Z(t)\le C\ve^2$ which is analogous to the proof of \eqref{XYZ-initial}.

\vskip 0.1cm

When $t'\ge\frac{1}{\ve}$, the second equation of \eqref{W-eqn} can be rewritten as
\begin{equation*}
\begin{split}
L_2W_2&=aW_2+b,\\
a:&=\frac{cc'}{\sqrt{r}}(W_2-W_1)+\frac{cc'}{8r^2}u-\frac{3c'}{4r}\p_tu,\\
b:&=-\frac{cc'}{8r^2}u W_1-\frac{c'}{4r}\p_tuW_1-\frac{c'}{8r^{5/2}}u\p_tu
-\frac{3c}{16r^{5/2}}u+\frac{c}{8r^{3/2}}\p_ru.
\end{split}
\end{equation*}
Integrating $L_2W_2$ along the backward characteristic curve $\Gamma^-_{\mu}$ leads to
\begin{equation*}
W_2(t,r)=\int_{t'}^tL_2W_2(s,\tilde{r}(s))ds.
\end{equation*}
In addition, it can be concluded from \eqref{XYZ-assum}, \eqref{XYZ-impr-pf1} and \eqref{XYZ-impr-pf5} that
\begin{equation*}
\begin{split}
\Big|\int_{t'}^tb(s,\tilde{r}(s))ds\Big|&\le \frac{C\ve}{t^{3/2}}\int_{\Gamma^-_{\mu}}|W_1|(dr-cdt)+\int_{t'}^t(C\ve s^{-3}+\frac{1}{7}s^{-2}Y(s))ds\\
&\le\frac{C\ve^2}{t^{3/2}}+\frac{|t-t'|}{6}\cdot\frac{3E\ve^2}{t}\le\frac{C\ve^2}{t^{3/2}}+\frac{4\tilde C E^2\ve^2}{t}\le\frac{5\tilde CE^2\ve^2}{t}
\end{split}
\end{equation*}
and
\begin{equation*}
\Big|\int_{t'}^t(aW_2)(s,\tilde{r}(s))ds\Big|\le\frac{C\ve^2}{t}(\frac{C\ve^2}{t^{3/2}}
+\frac{1}{t^{1/2}}\int_{\Gamma^-_{\mu}}|W_1|(dr-cdt)+\frac{C\ve}{t^{3/2}})
\le\frac{\tilde CE^2\ve^2}{t}.
\end{equation*}
Combining all these estimates above, we can achieve
\begin{equation*}
t|W_2(t,r)|\le6\tilde CE^2\ve^2.
\end{equation*}
This completes the proof of Lemma \ref{Lem6.4}.
\end{proof}

\subsection{Proof of Theorem \ref{thm2}}

\begin{proof}[Proof of Theorem \ref{thm2}]
Along the characteristics $\Gamma_{\sigma_0}^+$, we obtain the following Riccati-type ODE of $W_1$
\begin{equation*}
\frac{dW_1}{dt}(t,r(t))=L_1W_1=a_0(t)W_1^2+a_1(t)W_1+a_2(t),
\end{equation*}
where
\begin{equation*}
\begin{split}
a_0(t)&=\frac{cc'}{\sqrt{r}},\\
a_1(t)&=-\frac{cc'}{\sqrt{r}}W_2+\frac{cc'}{8r^2}u+\frac{3c'}{4r}\p_tu,\\
a_2(t)&=\frac{c'}{4r}\p_tuW_2-\frac{cc'}{8r^2}u W_2+\frac{c'}{8 r^{5/2}}u\p_tu+\frac{3c}{16 r^{5/2}}u-\frac{c}{8 r^{3/2}}\p_ru.
\end{split}
\end{equation*}
By $c(0)=1$ and \eqref{XYZ-assum}, one can find that
\begin{equation*}
|a_1|\le\frac{C\ve^2}{t^{3/2}}+\frac{C\ve}{t^2}+\frac{1}{t}\cdot\frac{3E\ve}{t^{1/2}}\le\frac{4E\ve}{t^{3/2}}
\end{equation*}
and
\begin{equation*}
|a_2|\le\frac{C\ve^3}{t^{5/2}}+\frac{C\ve}{t^3}+\frac{E\ve}{2t^2}\le\frac{E\ve}{t^2}.
\end{equation*}
Integrating these over $[\frac{1}{\ve},T_\ve]$ yields
\begin{equation*}
\begin{split}
&\int_{\frac{1}{\ve}}^{T_{\ve}}|a_1(t)|dt\le\int_{\frac{1}{\ve}}^{T_{\ve}}\frac{4E\ve}{t^{3/2}}dt\le8E\ve^{3/2},\\
&\int_{\frac{1}{\ve}}^{T_{\ve}}|a_2(t)|dt\le\int_{\frac{1}{\ve}}^{T_{\ve}}\frac{E\ve}{t^2}dt\le E\ve^2,
\end{split}
\end{equation*}
which implies
\begin{equation*}
K=\Big(\int_{\frac{1}{\ve}}^{T_{\ve}}|a_2(t)|dt\Big)\exp\Big(\int_{\frac{1}{\ve}}^{T_{\ve}}|a_1(t)|dt\Big)\le E\ve^2e^{8E\ve^{3/2}}=O(\ve^2).
\end{equation*}
Finally, we turn to compute $W(\frac{1}{\ve})$.
From \eqref{loc:energyC1}, one has
\begin{equation}\label{thm2-pf1}
|\p\p^{\le1}(u-u_{ap})|\le C_B\ve^{1.4}(1+r)^{-1/2},\quad t\le T_\ve.
\end{equation}
On the other hand, \eqref{LWE-error} leads to
\begin{equation}\label{thm2-pf2}
|\p\p^{\le1}(w_B-r^{1/2}F_0)|\le C(1+t)^{-3/2}\ln(2+t),\quad t\le T_\ve,r\ge t/2+1.
\end{equation}
According to the definition \eqref{app-solution}, we have $u_{ap}(\frac{1}{\ve})=\ve w_B(\frac{1}{\ve})$.
This, together with \eqref{thm2-pf1} and \eqref{thm2-pf2}, yields
\begin{equation*}
\begin{split}
W_1(\frac{1}{\ve})&=-\frac{1}{2c}\Big\{\frac{1}{2}r^{-1/2}\p_tu+r^{1/2}\p_{tr}^2u+\frac{1}{4}cr^{-3/2}u
-cr^{-1/2}\p_ru-cr^{1/2}\p_r^2u\Big\}\Big|_{\frac{1}{\ve}} \\
&=-\frac{r^{1/2}}{2c}(\p_{tr}^2u_{ap}-c\p_r^2u_{ap})\Big|_{\frac{1}{\ve}}+O(\ve^{1.4})\\
&=-\frac{\ve r^{1/2}}{2c}(\p_{tr}^2w_B-c\p_r^2w_B)\Big|_{\frac{1}{\ve}}+O(\ve^{1.4})\\
&=\ve F^{''}_0(r(\frac{1}{\ve})-\frac{1}{\ve})+O(\ve^{1.4}),
\end{split}
\end{equation*}
where $(\frac{1}{\ve},r(\frac{1}{\ve}))\in\Gamma^+_{\sigma_0}$.
It is easy to check that
\begin{equation*}
r(\frac{1}{\ve})=t_0+\sigma_0+\int_{t_0}^{\frac{1}{\ve}}c(\p_tu(s))ds=\sigma_0+\int_{t_0}^{\frac{1}{\ve}}(c(\p_tu(s))-1)ds+\frac{1}{\ve}.
\end{equation*}
From this and \eqref{XYZ-assum}, we arrive at
\begin{equation*}
|r(\frac{1}{\ve})-\frac{1}{\ve}-\sigma_0|\le O(\ve^{1/2}),
\end{equation*}
which leads to
\begin{equation*}
W_1(\frac{1}{\ve})=\ve F^{''}_0(\sigma_0)+O(\ve^{1.4})>K.
\end{equation*}
Applying Lemma \ref{blowup-lem} yields
\begin{equation*}
\int_{\frac{1}{\ve}}^{T_\ve}a_0(t)dt=2c'(0)(\sqrt{T_\ve}-\sqrt{\frac{1}{\ve}})+O(\ve\ln\frac{1}{\ve})
<\frac{e^{8E\ve^{3/2}}}{\ve F^{''}_0(\sigma_0)+O(\ve^{1.4})}.
\end{equation*}
Thus, one has
\begin{equation*}
\limsup_{\ve \to 0+}\ve\sqrt{T_\ve}\le\frac{1}{2c'(0)F_0''(\sigma_0)}.
\end{equation*}
This completes the proof of Theorem \ref{thm2}.
\end{proof}

\vskip 0.2 true cm

{\bf \color{blue}{Conflict of Interest Statement:}}

\vskip 0.1 true cm

{\bf The authors declare that there is no conflict of interest in relation to this article.}

\vskip 0.2 true cm
{\bf \color{blue}{Data availability statement:}}

\vskip 0.1 true cm

{\bf  Data sharing is not applicable to this article as no data sets are generated
during the current study.}

\end{document}